\newcommand{\quash}[1]{}  
\newcommand{\leftexp}[2]{{\vphantom{#2}}^{#1}{#2}}
\newcommand{\const}[1]{\overline{\QQ}_{\ell,#1}}
\newcommand{\twtimes}[1]{\stackrel{#1}{\times}}
\newcommand{\homo}[2]{\mathbf{H}_{#1}({#2})}   
\newcommand{\homog}[2]{\textup{H}_{#1}({#2})}  
\newcommand{\coho}[2]{\mathbf{H}^{#1}({#2})}    
\newcommand{\cohog}[2]{\textup{H}^{#1}({#2})}     
\newcommand{\hBM}[2]{\textup{H}^{\textup{BM}}_{#1}({#2})}  
\newcommand{\jiao}[1]{\langle{#1}\rangle}
\DeclareMathOperator{\SL}{SL}
\DeclareMathOperator{\PGL}{PGL}
\DeclareMathOperator{\Sp}{Sp}
\DeclareMathOperator{\Hom}{Hom}
\DeclareMathOperator{\Ext}{Ext}
\DeclareMathOperator{\End}{End}
\DeclareMathOperator{\Aut}{Aut}
\DeclareMathOperator{\Corr}{Corr}
\DeclareMathOperator{\Frob}{Frob}
\DeclareMathOperator{\Spec}{Spec}
\DeclareMathOperator{\Res}{Res}
\DeclareMathOperator{\id}{id}
\DeclareMathOperator{\Tr}{Tr}
\DeclareMathOperator{\ev}{ev}
\DeclareMathOperator{\Irr}{Irr}
\DeclareMathOperator{\Pic}{Pic}
\DeclareMathOperator{\Supp}{Supp}
\DeclareMathOperator{\codim}{codim}
\DeclareMathOperator{\Ch}{Ch}
\def\bI{\mathbf{I}}
\def\bR{\mathbf{R}}
\def\AA{\mathbb{A}}
\def\BB{\mathbb{B}}
\def\CC{\mathbb{C}}
\def\DD{\mathbb{D}}
\def\FF{\mathbb{F}}
\def\GG{\mathbb{G}}
\def\PP{\mathbb{P}}
\def\QQ{\mathbb{Q}}
\def\ZZ{\mathbb{Z}}
\def\frg{\mathfrak{g}}
\def\frt{\mathfrak{t}}
\def\frh{\mathfrak{h}}
\def\frc{\mathfrak{c}}
\def\frr{\mathfrak{r}}
\def\frR{\mathfrak{R}}
\def\calA{\mathcal{A}}
\def\calB{\mathcal{B}}
\def\calC{\mathcal{C}}
\def\calE{\mathcal{E}}
\def\calF{\mathcal{F}}
\def\calG{\mathcal{G}}
\def\calH{\mathcal{H}}
\def\calI{\mathcal{I}}
\def\calL{\mathcal{L}}
\def\calM{\mathcal{M}}
\def\calN{\mathcal{N}}
\def\calO{\mathcal{O}}
\def\calP{\mathcal{P}}
\def\calQ{\mathcal{Q}}
\def\calR{\mathcal{R}}
\def\tilh{\widetilde{h}}
\def\tilm{\widetilde{m}}
\def\tilp{\widetilde{p}}
\def\tils{\widetilde{s}}
\def\tilw{\widetilde{w}}
\def\tilW{\widetilde{W}}
\def\tila{\widetilde{a}}
\def\tilv{\widetilde{v}}
\def\tilx{\widetilde{x}}
\def\tilj{\widetilde{j}}
\def\tiliota{\widetilde{\iota}}
\def\tilxi{\widetilde{\xi}}
\def\tilf{\widetilde{f}}
\def\tilep{\widetilde{\epsilon}}
\def\tiltau{\widetilde{\tau}}
\def\tilmu{\widetilde{\mu}}
\def\tilnu{\widetilde{\nu}}
\def\tilinf{\widetilde{\infty}}
\def\tilDel{\widetilde{\Delta}}
\def\tcP{\widetilde{\calP}}
\def\tcA{\widetilde{\calA}}
\def\tcB{\widetilde{\calB}}
\def\tcC{\widetilde{\calC}}
\def\hatT{\widehat{T}}
\def\hatG{\widehat{G}}
\def\hatH{\widehat{H}}
\def\hatB{\widehat{B}}
\def\hatx{\widehat{x}}
\def\unEnd{\underline{\End}}
\def\unHom{\underline{\Hom}}
\def\unW{\underline{W}}
\def\untilW{\underline{\tilW}}
\def\xch{\mathbb{X}^*}
\def\xcoch{\mathbb{X}_*}
\def\Ql{\overline{\QQ}_\ell}
\def\isom{\stackrel{\sim}{\to}}
\def\pH{\leftexp{p}{\mathbf{H}}}
\def\ptau{\leftexp{p}{\tau}}
\def\Out{\textup{Out}}
\def\act{\textup{act}}
\def\rs{\textup{rs}}
\def\reg{\textup{reg}}
\def\red{\textup{red}}
\def\mult{\textup{mult}}
\def\Ad{\textup{Ad}}
\def\ad{\textup{ad}}
\def\nil{\textup{nil}}
\def\st{\textup{st}}
\def\parab{\textup{par}}
\def\Poin{\textup{Poin}}
\def\Bun{\textup{Bun}}
\def\Hit{\textup{Hit}}
\def\Grass{\mathcal{G}r}
\def\tilGr{\widetilde{\Grass}}
\def\tilGr{\widetilde{\Grass}}
\def\Bunpar{\Bun^{\parab}}
\def\stPic{\calP\textup{ic}}
\def\cPic{\overline{\Pic}}
\def\ani{\textup{ani}}
\def\Ah{\calA^{\heartsuit}}
\def\Aa{\calA^{\ani}}
\def\AHit{\calA^{\Hit}}
\def\Ag{\calA^{\diamondsuit}}
\def\tcAg{\tcA^{\diamondsuit}}
\def\tcArs{\tcA^{\rs}}
\def\tcAgrs{\tcA^{\diamondsuit,\rs}}
\def\Mpar{\mathcal{M}^{\parab}}
\def\Mparrs{\mathcal{M}^{\parab,\rs}}
\def\Mparreg{\mathcal{M}^{\parab,\reg}}
\def\MHit{\mathcal{M}^{\Hit}}
\def\MHitreg{\mathcal{M}^{\Hit,\reg}}
\def\Hecke{\mathcal{H}\textup{ecke}}
\def\Heckep{\Hecke^{\parab}}
\def\Heckeprs{\Hecke^{\parab,\rs}}
\def\fpar{f^{\parab}}
\def\fHit{f^{\Hit}}
\def\fQl{f^{\parab}_*\Ql}
\def\tfQl{\widetilde{f}_*\Ql}
\def\tfHQl{\widetilde{f}_{H,*}\Ql}
\def\Cmod{\calC^{\textup{mod}}}
\def\tcCmod{\tcC^{\textup{mod}}}
\def\Wa{W_{\textup{aff}}}
\def\VD{\textup{VD}}
\def\PD{\textup{PD}}
\def\LP{\calL^{\textup{Poin}}}
\def\dual{^\vee}
\def\Gd{G\dual}
\def\Td{T\dual}
\def\frtd{\frt\dual}
\def\frcd{\frc\dual}
\def\calPd{\calP\dual}
\def\calPdc{\calP^{\vee,0}}
\def\tcPd{\tcP\dual}
\def\tfd{\widetilde{f}^{\vee}}
\def\Md{M\dual}
\def\Mde{M^{\vee,\textup{ev}}}
\def\Mdo{M^{\vee,\textup{od}}}
\def\Kg{K_{\diamondsuit}}
\def\Lg{L_{\diamondsuit}}
\def\VP{V_\ell(\calP^0/\Ag)}
\def\VPd{V_\ell(\calPdc/\Ag)}
\theoremstyle{plain}
\newtheorem{theorem}[subsubsection]{Theorem}
\newtheorem{lemma}[subsubsection]{Lemma}
\newtheorem{cor}[subsubsection]{Corollary}
\newtheorem{prop}[subsubsection]{Proposition}
\newtheorem*{tha}{Theorem A}
\newtheorem*{thb}{Theorem B}
\newtheorem*{thc}{Theorem C}
\theoremstyle{definition}
\newtheorem{defn}[subsubsection]{Definition}
\newtheorem{cons}[subsubsection]{Construction}
\newtheorem{remark}[subsubsection]{Remark}
\numberwithin{equation}{section}
\title[Towards a Global Springer Theory III]{Towards a Global Springer Theory III:\\ Endoscopy and Langlands duality}
\author{Zhiwei Yun}
\address{Department of Mathematics, Princeton University, Princeton, NJ 08544, USA}
\email{zyun@math.princeton.edu}
\date{April 2009}
\subjclass[2000]{Primary 14H60, 20G35; Secondary 14F20, 14K30}
\begin{document}

\begin{abstract}
We prove three new results about the global Springer action defined in \cite{GSI}. The first one determines the support of the perverse cohomology sheaves of the parabolic Hitchin complex, which serves as a technical tool for the next results. The second one (the Endoscopic Decomposition Theorem) links certain direct summands of the parabolic Hitchin complex of $G$ to the endoscopic groups of $G$. This result generalizes Ng\^o's geometric stabilization of the trace formula in \cite{NgoFL}. The third result links the stable parts of the parabolic Hitchin complexes for Langlands dual groups, and establishes a relation between the global Springer action on one hand and certain Chern class action on the other. This result is inspired by the mirror symmetry between dual Hitchin fibrations. Finally, we present the first nontrivial example in the global Springer theory.
\end{abstract}

\maketitle

\tableofcontents

\section{Introduction}
This paper is a continuation of \cite{GSI} and \cite{GSII}, but logically independent of \cite{GSII} for the most part. For an overview of the ideas and motivations of this series of papers, see the Introduction of \cite{GSI}. We will use the notations and conventions from \cite[Sec. 2]{GSI}. In particular, we fix a connected reductive group $G$ over an algebraically closed field $k$ with a Borel subgroup $B$, a connected smooth projective curve $X$ over $k$ and a divisor $D$ on $X$ of degree at least twice the genus of $X$. Recall from \cite[Def. 3.1.2]{GSI} that we defined the parabolic Hitchin moduli stack $\Mpar=\Mpar_{G,X,D}$ as the moduli stack of quadruples $(x,\calE,\varphi,\calE^B_x)$ where
\begin{itemize}
\item $x\in X$;
\item $\calE$ is a $G$-torsor on $X$ with a $B$-reduction $\calE^B_x$ at $x$;
\item $\varphi\in\cohog{0}{X,\Ad(\calE)(D)}$ is a Higgs field compatible with $\calE^B_x$.
\end{itemize}
We also defined the (enhanced) parabolic Hitchin fibration (see \cite[Def. 3.1.6]{GSI}):
\begin{equation*}
\fpar:\Mpar\xrightarrow{\tilf}\tcA\xrightarrow{q}\calA\times X.
\end{equation*}
where $\tcA$ is the universal cameral cover in \cite[Def. 3.1.7]{GSI}.

In \cite{GSI}, we have constructed an action of the extended affine Weyl group $\tilW$ on the parabolic Hitchin complex $\fQl$ (\cite[Th. 4.4.3]{GSI}), and an action of the lattice $\xcoch(T)$ on the enhanced parabolic Hitchin complex $\tfQl$ (\cite[Prop. 4.4.6]{GSI}). This paper studies the decomposition of $\tfQl$ into generalized eigen-subcomplexes according to the $\xcoch(T)$-action. In the course of analyzing these subcomplexes, endoscopy and Langlands duality naturally come into the picture.

The generalized eigen-subcomplexes of $\tfQl$ have two types. The first type have supports on a proper subscheme of $\tcA$, and they can be understood using the endoscopic groups of $G$. This is the content of the Endoscopic Decomposition Theorem (Th. \ref{th:endo}). The second type have supports on the whole $\tcA$, and they are essentially the same as the {\em stable part} of $\tfQl$ (i.e., the part on which the $\xcoch(T)$-action is unipotent, see Def. \ref{def:stpart}). To understand the stable part, we consider the parabolic Hitchin complex for the Langlands dual group $\Gd$, and prove a result of mirror-symmetry style (Th. \ref{th:LD}).

Recall from \cite[Rem. 3.5.6]{GSI} that we have chosen an open subset $\calA$ of the anisotropic Hitchin base $\Aa$ on which the codimension estimate $\codim_{\AHit}(\calA_\delta)\geq\delta$ holds for any $\delta\in\ZZ_{\geq0}$. Throughout this paper, we will work over this open subset $\calA$. All stacks originally over $\AHit$ or $\Aa$ will be restricted to $\calA$ without changing notation. Note that when $\textup{char}(k)=0$, we may take $\calA=\Aa$.

\subsection{Main results}

\subsubsection{The Support Theorem}
The first result is about the perverse cohomology of the enhanced parabolic Hitchin complex $\tfQl$. By the decomposition theorem (see \cite[Th. 6.2.5]{BBD}), $\tfQl$ is {\em non-canonically} a direct sum of shifted perverse sheaves $\oplus_i(\pH^i\tfQl)[-i]$. We would like to understand the supports of the simple constituents of $\pH^i\tfQl$.

Recall from \cite[Sec. 2.3]{GSI} that $(\calA\times X)^{\rs}$ is the locus of $(a,x)$ where the value $a(x)\in\frc$ is regular semisimple, and $\tcArs$ is the preimage of $(\calA\times X)^{\rs}$ in $\tcA$. 

\begin{tha}[the Support Theorem, see Th. \ref{th:supp}]
Any simple constituent of $\oplus_i\pH^i\tfQl$ is the middle extension of its restriction to $\tcArs$; i.e.,
\begin{equation*}
\pH^i\tfQl=\tilj_{!*}\tilj^*\pH^i\tfQl,
\end{equation*}
where $\tilj:\tcArs\hookrightarrow\tcA$ is the open inclusion. Similar result holds for the perverse cohomology sheaves of $\fQl$.
\end{tha}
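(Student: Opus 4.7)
My plan is to adapt Ng\^o's geometric support theorem from \cite{NgoFL} to the enhanced parabolic setting. First I would reduce the statement for $\fQl$ to the statement for $\tfQl$. Since $q:\tcA\to\calA\times X$ is finite flat and $\fQl = q_*\tfQl$, the functor $q_*$ is perverse $t$-exact, so $\pH^i\fQl = q_*\pH^i\tfQl$. For a finite map that is generically \'etale (here $q$ is a $W$-cover over $(\calA\times X)^{\rs}$), the pushforward of a simple IC sheaf on $\tcA$ with support $Z$ is semisimple with simple summands all of support $q(Z)$; since $q(\tcArs) = (\calA\times X)^{\rs}$, the middle-extension property descends via $q$, and it suffices to treat $\tfQl$.

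The main step is to verify that $\tilf:\Mpar\to\tcA$ is a weak abelian fibration in Ng\^o's sense on the open part of $\tcA$ inherited from the fixed $\calA$, and satisfies his $\delta$-regularity. Three ingredients are needed: (i) $\tilf$ is proper, which holds over the anisotropic locus $\calA\subseteq\Aa$; (ii) the restriction of $\tilf$ to $\tcArs$ is a torsor for a fiberwise connected commutative group stack, namely the pullback of the neutral component $\calP^0$ of the symmetry Picard stack from $\calA$, reflecting the fact that over the regular semisimple locus the choice of a cameral point rigidifies the parabolic reduction at $x$; (iii) the $\delta$-regularity condition, pulled back via $q$ from the codimension estimate $\codim_{\AHit}(\calA_\delta)\geq\delta$ of \cite[Rem. 3.5.6]{GSI}. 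Granting (i)--(iii), Ng\^o's support theorem \cite{NgoFL} forces every simple perverse constituent of $\pH^i\tfQl$ to have support equal to an irreducible component of $\tcA$. Since $\tcArs$ is dense in $\tcA$, every such support meets $\tcArs$ densely, so each simple constituent $K$ satisfies $K=\tilj_{!*}\tilj^*K$. Summing over simple constituents yields the displayed equality, and the reduction above propagates it to $\fQl$.

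The main obstacle lies in the structural verifications of (ii) and the codimension estimate of (iii) transported to $\tcA$. Constructing a Picard-stack action on $\Mpar$ relative to $\tcA$, compatible with the parabolic reduction, and identifying its regular-semisimple fibers as principal homogeneous spaces under $\calP^0$ requires carefully lifting the $\calP$-action on $\MHit$ over $\calA$ through the forgetful map $\Mpar\to\MHit\times X$, while keeping track of the parabolic data at $x$ together with its associated cameral point. Likewise, relating a $\delta$-stratification of $\tcA$ to the given one of $\calA$ demands a precise analysis of how $q$ interacts with the $\delta$-strata and how the presence of $X$ as a factor affects codimensions. Once these are in place, the remainder of the argument reduces to invoking Ng\^o's general machinery and the elementary behavior of middle extensions under finite pushforward.
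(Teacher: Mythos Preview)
Your proposed conclusion that ``every simple perverse constituent of $\pH^i\tfQl$ has support equal to an irreducible component of $\tcA$'' is false, and this is the heart of the matter. The whole point of the subsequent $\kappa$-decomposition (see Prop.~\ref{p:kappasupp} and Sec.~\ref{s:endo}) is that for $\kappa\notin Z\hatG(\Ql)$ the summand $(\tfQl)_\kappa$ is supported on the \emph{proper} closed subscheme $\tcA_\kappa\subsetneq\tcA$. So simple constituents can and do have supports strictly smaller than $\tcA$; Ng\^o's support theorem does not force supports to be the whole base, only constrains their codimension via $\delta$-regularity. Your condition (ii) is also not satisfied: over $\tcArs$ one has $\Mpar_{a,\tilx}\cong\MHit_a$, and $\MHit_a$ is \emph{not} a $\calP^0_a$-torsor when $\delta(a)>0$.

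What the paper actually proves is weaker but sufficient: each support $Z$ is an irreducible component of $\tilp^{-1}(\tilp(Z))$, where $\tilp:\tcA\to\calA$ is the projection. The argument runs Ng\^o's amplitude bound (\cite[Prop.~7.2.3]{NgoFL}) for the $\tcP$-action on $\Mpar$ together with Poincar\'e duality to obtain $\codim_{\tcA}(Z)\leq\delta_Z$, and then the codimension estimate $\codim_{\calA}(\calA_\delta)\geq\delta$ gives the reverse inequality $\codim_{\tcA}(Z)\geq\codim_{\calA}(\tilp(Z))\geq\delta_Z$. Equality everywhere forces $q(Z)=\tilp(Z)\times X$ and $Z$ to be a component of $\tilp^{-1}(\tilp(Z))$. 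The punchline is then purely geometric: for any $a\in\calA(k)$ the locus $(\{a\}\times X)^{\rs}$ is dense in $\{a\}\times X$, so $Z\cap\tcArs$ is dense in $Z$. This is the step you are missing---the middle-extension property comes not from $Z$ being large in $\tcA$, but from $Z$ being ``horizontal'' over $X$ so that it automatically meets the generically-rs locus.
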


This theorem is analogous to the following statement in the classical Springer theory: the Springer sheaf $\pi_*\Ql$ is the middle extension of its restriction on $\frg^{\rs}$. A corollary of Th. A is that the action of $\tilW$ on the perverse cohomology sheaves $\pH^i\fQl$ factors through a finite quotient, hence semisimple. However, this is no longer true on the level of complexes: we will see an example in Sec. \ref{sss:ex} where the action of the lattice part on the stalks of the complex $\fQl$ is not semisimple. This non-semisimplicity will be further investigated in Th. C below.

\subsubsection{The Endoscopic Decomposition Theorem}

By \cite[Prop. 4.4.6]{GSI}, the action of the lattice part $\xcoch(T)\subset\tilW$ on $\tfQl$ gives a decomposition of into generalized eigen-subcomplexes:
\begin{equation}\label{eq:introdecomp}
\tfQl=\bigoplus_{\kappa\in\hatT(\Ql)}(\tfQl)_{\kappa}.
\end{equation}
Here $\hatT=\Hom(\xcoch(T),\GG_m)$ is a torus over $\Ql$.

Let us concentrate on one direct summand $(\tfQl)_\kappa$. For a {\em rigidified endoscopic datum} $(\kappa,\rho)$ (see Def. \ref{def:endo}), we can associate a quasi-split reductive group scheme $H$ over $X$, called an {\em endoscopic group} of $G$. Our previous results in the global Springer theory all go through for quasi-split groups such as $H$ (see Sec. \ref{ss:qs}). In particular, we have the enhanced parabolic Hitchin fibration $\tilf_{H}:\Mpar_H\to\tcA_H$ for $H$, and we can consider the complex $\tfHQl$ as well as its $\kappa$-part $(\tfHQl)_{\kappa}$. Now this $\kappa$-part is isomorphic to the stable part $(\tfHQl)_{\st}$ (see Rem. \ref{rm:qsstable}).

The bases $\tcA_{H}$ and $\tcA$ are related by the following diagram
\begin{equation*}
\xymatrix{\tcA_{H,\Theta}=\tcA_H\times_XX_\Theta\ar[r]^{\tilnu_{H,\Theta}}\ar[d]^{\theta_{H}} & \tcA_{\Theta}=\tcA\times_XX_\Theta\ar[d]^{\theta}\\
\tcA_H & \tcA}
\end{equation*}
We briefly clarify our notations here. The morphism $X_\Theta\to X$ is a connected \'etale Galois cover with Galois group $\Theta$. We choose $\Theta$ large enough so that the endoscopic groups $H$ associated to any $(\kappa,\rho)$ are split over $X_\Theta$. Let $W_\kappa$ be the stabilizer of $\kappa$ in $W$, then $W_\kappa$ can be written as a semidirect product $W_H\rtimes\pi_0(\kappa)$, where $W_H$ is the Weyl group of the split form of $H$, and $\pi_0(\kappa)$ is a finite group acting on $H$ by outer automorphisms. The datum of $\rho$ in the definition of the rigidified endoscopic datum is a homomorphism $\rho:\Theta\to\pi_0(\kappa)$.

The complex $\theta_H^*(\tfHQl)_\kappa$ is $\tilW_H\rtimes^{\rho}\Theta=\xcoch(T)\rtimes(W_H\rtimes^{\rho}\Theta)$-equivariant with respect to the $W_H\rtimes^{\rho}\Theta$-action on $\tcA_{H,\Theta}$; the complex $\theta^*(\tfQl)_\kappa$ is $\tilW_\kappa\times\Theta=(\xcoch(T)\rtimes W_\kappa)\times\Theta$-equivariant with respect to the $W_\kappa\times\Theta$-action on $\tcA_\Theta$.

\begin{thb}[the Endoscopic Decomposition Theorem, see Th. \ref{th:endo}]
There is a natural quasi-isomorphism of complexes on $\tcA_\Theta$:
\begin{equation*}
\theta^*(\tfQl)_\kappa\cong\bigoplus_{\rho,H=H_\rho}\tilnu_{H,\Theta,*}\theta_{H}^*(\tfHQl)_{\kappa}[-2r_\kappa](-r_\kappa).
\end{equation*}
where the direct sum runs over all homomorphisms $\rho:\Theta\to\pi_0(\kappa)$ (so that the endoscopic groups $H$ in the summands are determined accordingly) and $r_\kappa=\dim(\fpar)-\dim(\fpar_H)$. Moreover, for each $\rho$, the embedding of the direct summand indexed by $\rho$ in the above isomorphism
\begin{equation*}
\tilnu_{H,\Theta,*}\theta_{H}^*(\tfHQl)_{\kappa}[-2r_\kappa](-r_\kappa)\hookrightarrow\theta^*(\tfQl)_\kappa
\end{equation*}
is $(\tilW_H\rtimes^{\rho}\Theta,\tilW_\kappa\times\Theta)$-equivariant under the embedding
\begin{eqnarray*}
\tiliota_\rho:\tilW_H\rtimes^{\rho}\Theta=\xcoch(T)\rtimes(W_H\rtimes^{\rho}\Theta)&\to& (\xcoch(T)\rtimes W_\kappa)\times\Theta=\tilW_\kappa\times\Theta\\
(\lambda,w_H,\sigma)&\mapsto&(\lambda,w_H\rho(\sigma),\sigma).
\end{eqnarray*}
\end{thb}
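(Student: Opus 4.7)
The plan is to reduce the statement, via the Support Theorem (Theorem~A), to an explicit construction on the regular semisimple locus $\tcArs_\Theta \subset \tcA_\Theta$, where both sides admit transparent descriptions in terms of Picard-type actions on the cameral cover; the isomorphism on all of $\tcA_\Theta$ is then recovered by applying $\tilj_{!*}$ to each perverse cohomology sheaf. Indeed, by Theorem~A (applied to both $\tfQl$ and $\tfHQl$, the latter via the quasi-split extension of the global Springer theory in Sec.~\ref{ss:qs}), each perverse cohomology sheaf of the two complexes appearing in the theorem is the middle extension of its restriction to the regular semisimple locus; so constructing a canonical and equivariant quasi-isomorphism over $\tcArs_\Theta$ will suffice.

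On $\tcArs$, the parabolic Hitchin fibers are torsors under the relative Picard stack $\calP^0$ of the cameral cover, and the $\xcoch(T)$-action on $\tfQl|_{\tcArs}$ from \cite[Prop.~4.4.6]{GSI} factors through the component sheaf $\pi_0(\calP)$; the summand $(\tfQl)_\kappa|_{\tcArs}$ thus extracts the part on which $\pi_0(\calP)$ acts through the character $\kappa$. A parallel description over $\tcArs_H$ identifies $(\tfHQl)_\kappa|_{\tcArs_H}$, which coincides with the stable part $(\tfHQl)_{\st}$ by Remark~\ref{rm:qsstable}, with the $\pi_0(\calP_H)$-invariant summand of $\tfHQl|_{\tcArs_H}$. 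The comparison between $\calP$ and $\calP_H$ along $\tilnu_{H,\Theta}$, combined with proper base change and the projection formula, should yield the desired isomorphism over $\tcArs_\Theta$; the shift $[-2r_\kappa](-r_\kappa)$ arises from the relative dimension $r_\kappa = \dim\fpar - \dim\fpar_H$ together with the Tate twist required to match normalizations.

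The summation over $\rho:\Theta \to \pi_0(\kappa)$ reflects a combinatorial feature of the base change to $X_\Theta$: over a geometric point of $\tcArs_\Theta$, the different choices of $\rho$ parametrize the ways the $\Theta$-monodromy can factor through the outer-automorphism component group $\pi_0(\kappa)$, and so enumerate the endoscopic groups $H_\rho$ over $X$ with the fixed split form determined by $\kappa$. Pulling back to $X_\Theta$ trivializes the outer twist, and the $\kappa$-part on $\tcA_\Theta$ splits according to all possible rigidifications $\rho$; exhaustiveness of the decomposition should follow from an orbit-counting argument for the action of $\pi_0(\kappa)$ on the set of $\rho$'s.

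The main obstacle will be verifying full $(\tilW_H \rtimes^{\rho}\Theta,\tilW_\kappa \times \Theta)$-equivariance of the embedding of each summand under $\tiliota_\rho$. The $\xcoch(T)$-equivariance is automatic since both sides are by construction $\kappa$-isotypic, and $\Theta$-equivariance follows from the functoriality of the pullbacks along $\theta$ and $\theta_H$; but compatibility of the $W_H$-action on the endoscopic side with the $W_\kappa$-action on the $G$-side, twisted by $\rho(\sigma)$, demands a careful comparison of the Hecke correspondences used in \cite[Th.~4.4.3]{GSI} to define the global Springer action for $G$ and for $H$, and matching them via the embedding of cameral covers for $H$ into those for $G$. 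Once equivariance is checked over $\tcArs_\Theta$, Theorem~A propagates the equivariant isomorphism uniquely to all of $\tcA_\Theta$.
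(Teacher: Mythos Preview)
Your plan has a structural gap: you propose to construct the isomorphism over $\tcArs_\Theta$ and then apply $\tilj_{!*}$ to each perverse cohomology sheaf, but this produces only isomorphisms of perverse sheaves, not a quasi-isomorphism of complexes. The theorem is explicitly a statement about complexes (it carries the non-semisimple $\xcoch(T)$-action, which is invisible on perverse cohomology). Two geometric complexes can have isomorphic perverse cohomology sheaves without being quasi-isomorphic, and the decomposition $\tfQl\cong\bigoplus_i\pH^i\tfQl[-i]$ is non-canonical, so you cannot simply reassemble.

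The paper proceeds in the opposite direction: it first constructs a \emph{global} map of complexes
\[
[\tcC_{H,\Theta}]_\#:\theta^*\tfQl\to\tilmu_{H,\Theta,*}\theta_H^*\tfHQl[-2r](-r)
\]
using the fundamental class of the \emph{parabolic endoscopic correspondence} $\tcC_{H,\Theta}$ (Ng\^o's unpublished construction, recorded in the appendix), the closure of the graph of an explicit morphism $\tilh_\Theta:\tcA_{H,\Theta}\times_{\tcA_\Theta}\Mparreg_\Theta\to\Mparreg_{H,\Theta}$ built from the homomorphism $h_\calP:\mu_H^*\calP\to\calP_H$. Your phrase ``proper base change and the projection formula'' does not substitute for this: even over $\tcArs$, the fibers $\MHit_a$ and $\MHit_{H,a_H}$ are different varieties, and relating $\cohog{*}{\MHit_a}_\kappa$ to $\cohog{*}{\MHit_{H,a_H}}_\kappa$ requires a geometric cycle mediating between them. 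Equivariance of $[\tcC_{H,\Theta}]_\#$ under $\tilW_H\rtimes^\rho\Theta$ is then proven (Prop.~\ref{p:Ceq}) by composing the endoscopic correspondence with Hecke correspondences on either side and checking equality over $(\calA\times X)_0$; this needs dimension estimates (Lem.~\ref{l:endoG2}) to invoke the machinery of \cite[App.~A]{GSI}. Only after the map is in hand does the Support Theorem enter, reducing the check that $\bigoplus_\rho[\tcC_{H,\Theta}]_{\#,\kappa}$ is an isomorphism to a stalk calculation at geometric generic points of $\tcA_{\kappa,\Theta}$; there the correspondence factors as a composition of $\PP^1$-bundle correspondences (Lem.~\ref{l:simpleC}), and a diagram chase using $\kappa(\beta_i^\vee)\neq1$ (Lem.~\ref{l:nodal}) finishes the argument.
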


On one hand, this theorem is analogous to the Induction Theorems in the classical Springer theory (see \cite{AL},\cite{Tr}); on the other hand, this result is also an analogue of the Geometric Stabilization of the Trace Formula proved by Ng\^o in \cite[Th. 6.5.3]{NgoFL}. Note that our result is a statement on the level of complexes (which contains information about the non-semisimplicity of the affine Weyl group actions), whereas \cite[Th. 6.4.3]{NgoFL} is a statement on the level of perverse cohomology which does not see the non-semisimple actions but suffices for the purpose of proving the Fundamental Lemma.

\subsubsection{Langlands duality}

In Th. B, if $\kappa\in Z\hatG(\Ql)$, we have $H=G$, therefore the theorem is an empty statement. In this case, the $\kappa$-part of $\tfQl$ is essentially the stable part $(\tfQl)_{\st}$ of $\tfQl$ (see Prop. \ref{p:stable}). The name ``stable'' comes from the fact that for the usual Hitchin complex, the Frobenius traces of the stable part give stable orbital integrals. Our next result gives a way to partially understand the non-semisimplicity of the action of $\xcoch(T)$ on $(\tfQl)_{\st}$, using the Langlands dual group $\Gd$ (defined also over $k$, not over $\Ql$).

We can identify the enhanced Hitchin bases $\tcA_G$ and $\tcA_{\Gd}$ using a Killing form on $\frt$. We denote them simply by $\tcA$. Consider two complexes on $\tcA$:
\begin{equation*}
K=(\tfQl)_{\st}[d](d/2);\hspace{1cm}L=(\tfd_*\Ql)_{\st}[d](d/2),
\end{equation*}
where $d$ is the common dimension of $\Mpar_G$ and $\Mpar_{\Gd}$. Let $K^i,L^i$ be the perverse cohomology sheaves of $K$ and $L$.

On one hand, we have the action of the lattice $\xcoch(T)$ on $K$ by \cite[Prop. 4.4.6]{GSI}. Since $K$ is the stable part of $\tfQl$, this action is unipotent (i.e., equals the identity when passing to perverse cohomology). Hence for $\lambda\in\xcoch(T)$, the endomorphism $\lambda-\id$ on $K$ induces a map
\begin{equation*}
\Sp^i(\lambda):=\pH^i(\lambda-\id):K^i\to K^{i-1}[1],
\end{equation*}
which can be viewed as a ``subdiagonal entry'' of the unipotent action $\lambda$ with respect to the perverse filtration on $K$.

On the other hand, we have the action of the lattice $\xch(\Td)$ on $L$ by cup product with the Chern classes $c_1(\calL(\lambda))$, $\lambda\in\xch(\Td)$. These Chern classes are what we used to extend the affine Weyl group action to the DAHA action in \cite[Sec. 3]{GSII}. Passing to perverse cohomology, the induced map
\begin{equation*}
\cup c_1(\calL(\lambda)): L^i\to L^{i+2}(1)
\end{equation*}
is in fact zero (see Lem. \ref{l:chernvan}). Therefore, it makes sense to talk about the ``subdiagonal entries'':
\begin{equation*}
\Ch^i(\lambda):=\pH^i(\cup c_1(\lambda)): L^i\to L^{i+1}[1](1).
\end{equation*}

The following theorem gives a Verdier duality between the perverse sheaves $K^i$ and $L^i$, together with an identification of the above two lattice actions.

\begin{thc}[See Th. \ref{th:VD} and \ref{th:LD}]
For each $i\in\ZZ$, there are natural isomorphisms of perverse sheaves on $\tcA$:
\begin{equation}\label{eq:introVD}
K^{-i}\cong\DD K^i\cong L^i(i).
\end{equation}
Moreover, for each $\lambda\in\xcoch(T)=\xch(\Td)$, we have a commutative diagram
\begin{equation}\label{d:introLD}
\xymatrix{K^{-i}\ar[r]^{\sim}\ar[d]_{\Sp^{-i}(-\lambda)}^{\wr} & \DD K^i\ar[d]^{(\DD\Sp^{i+1}(\lambda))[1]}_{\wr}\ar[rr]^{\sim} & & L^i(i)\ar[d]^{\Ch^i(\lambda)(i)}_{\wr}\\
K^{-i-1}[1]\ar[r]^{\sim} & \DD K^{i+1}[1]\ar[rr]^{\sim} & & L^{i+1}[1](i+1)}
\end{equation}
where the two rows are the isomorphisms in (\ref{eq:introVD}).
\end{thc}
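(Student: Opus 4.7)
The plan is to reduce every claim to the regular semisimple locus $\tcArs$ via Theorem A, where the enhanced parabolic Hitchin fibration $\tilf$ is smooth and proper (modulo central stackiness) with fibres torsors under the Prym Picard stack $\calP$; Verdier self-duality of such a smooth proper fibration then gives the first isomorphism $K^{-i} \cong \DD K^i$, while Langlands duality for Hitchin systems — realized by a relative Fourier–Mukai transform between $\calP$ and its dual $\calPd$ — gives the second isomorphism $\DD K^i \cong L^i(i)$.

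More precisely, by Theorem A applied to both $G$ and $\Gd$, each of $K^i$, $\DD K^i$, and $L^i$ equals the middle extension of its restriction to $\tcArs$, so it suffices to construct the isomorphisms in (\ref{eq:introVD}) and verify the commutativity of (\ref{d:introLD}) over $\tcArs$. Over this locus, $\tfQl|_{\tcArs}[d](d/2)$ is Verdier self-dual, and the Hecke translation action of $\xcoch(T)$ on $\tfQl$ is intertwined by Poincar\'e duality of the abelian-variety fibres with the translation by the opposite lattice. Passing to perverse cohomology yields $\DD K^i \cong K^{-i}$ together with $\DD\Sp^{i+1}(\lambda)[1] \cong \Sp^{-i}(-\lambda)$, which handles the left square of (\ref{d:introLD}) and, after middle extension, produces Theorem \ref{th:VD}.

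For Theorem \ref{th:LD} one invokes a Poincar\'e line bundle $\LP$ on $\calP \times_{\tcArs} \calPd$, whose existence is the essential content of Langlands duality for Hitchin systems. The associated relative Fourier–Mukai transform identifies $\DD(\tfQl|_{\tcArs})$ with $\tfd_*\Ql|_{\tcArs}$, compatibly with the perverse $t$-structure up to a Tate twist $(i)$ on the $i$-th perverse cohomology, yielding $\DD K^i \cong L^i(i)$. For the right square of (\ref{d:introLD}), the key geometric fact is that Fourier–Mukai exchanges the translation action of $\lambda \in \xcoch(T)$ on $\calP$ with the action of tensoring by the line bundle $\calL(\lambda)$ on $\calPd$. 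Both induced operators are unipotent on perverse cohomology — the first because translations act trivially on the cohomology of an abelian variety, the second by Lem. \ref{l:chernvan} — and $\Sp^i(\lambda)$, $\Ch^i(\lambda)$ are precisely the first-order (subdiagonal) parts of these unipotent operators with respect to the perverse filtration, matched under Fourier–Mukai by the classical duality between translation on $\calP$ and characters on $\calPd$.

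The main obstacle will be executing this Fourier–Mukai argument in the relative stacky setting — with $\Mpar$ a $\calP$-torsor rather than $\calP$ itself, and with Picard stacks carrying positive-dimensional automorphism groups from $Z(G)$ — and controlling the bookkeeping of shifts, Tate twists and signs so that the subdiagonal identifications hold with the precise normalizations in (\ref{d:introLD}). In particular, verifying that the interplay between the Verdier self-duality of the previous paragraph and the Fourier–Mukai duality produces the sign $-\lambda$ (rather than $+\lambda$) under the Killing-form identification $\xcoch(T) \cong \xch(\Td)$ requires delicate tracking through both equivalences on the level of the first-order terms of the unipotent actions.
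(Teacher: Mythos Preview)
Your overall architecture---reduce to an open locus via the Support Theorem, then compare the two lattice actions there---matches the paper, but the mechanism you propose for the comparison is genuinely different from what the paper does, and your proposal leaves the hard part unexecuted.

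The paper does \emph{not} use a Fourier--Mukai transform. Instead, it works over the smaller open $\tcAg\subset\tcArs$ where the cameral curves are smooth, and computes both sides explicitly. Using the Donagi--Gaitsgory description of $\calP$ (Lem.~\ref{l:Pisogeny}), it shows that $\jmath_\calP:\calP\to\stPic_T(\tcA/\calA)^W$ is an isogeny, hence $\VP\cong(\homo{1}{\tcAg/\Ag}\otimes\xcoch(T))^W$ and dually $\VPd^*\cong(\coho{1}{\tcAg/\Ag}\otimes\xch(\Td))^W$. The isomorphism $\beta:\VP\cong\VPd^*(1)$ is then just Poincar\'e duality for the smooth curve $\tcAg/\Ag$ combined with $\xcoch(T)=\xch(\Td)$ and a $W$-averaging. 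The perverse sheaves $K^i,L^i$ become exterior powers of these Tate modules, and $\widetilde{\VD}^i$ is $\wedge^i\beta$. No Poincar\'e bundle on $\calP\times\calPd$ and no derived equivalence are invoked.

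For the right square, the paper again avoids abstract duality: it writes $\Sp_i(\lambda)^\natural$ as wedging by a class $\Phi_\lambda\in\VP[1]$ coming from $s_\lambda:\tcAg\to\calP$, and $\Ch^i(\lambda)^\natural$ as wedging by $\Psi_\lambda\in\VPd^*(1)[1]$ coming from $c_1(\calQ(\lambda))$. It then computes $\Phi_{\lambda,1}$ and $\Psi_{\lambda,1}$ explicitly (Lem.~\ref{l:computePhi}, \ref{l:computepsi}) via the Abel--Jacobi map and the Poincar\'e bundle \emph{on the curve}, and checks $\beta\circ\Phi_{\lambda,1}=\Psi_{\lambda,1}$ by direct formula. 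There is a residual degree-zero discrepancy $\Delta_\lambda:\Ql\to\VPd^*(1)[1]$ that does \emph{not} vanish for formal reasons; the paper kills it by restricting to the root hyperplanes $\tcAg_\alpha$ and showing both $s_{\alpha^\vee}$ and $\calQ(\alpha^\vee)$ trivialize there. Your sketch does not anticipate this step, and it is not clear a Fourier--Mukai argument would bypass it.

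Your Fourier--Mukai route is conceptually natural and is indeed the motivation the paper cites, but carrying it out would require constructing the relative duality $\calP\leftrightarrow\calPd$ in the $\ell$-adic setting (over possibly positive characteristic), handling the disconnected $\pi_0$ and the gerbe structure, and then proving that the resulting equivalence respects perverse filtrations with the correct Tate twist---none of which you do. The paper's approach trades this for concrete curve-level calculations, at the cost of the somewhat ad hoc $\Delta_\lambda$ argument.
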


This theorem is inspired by the mirror symmetric approach to the geometric Langlands conjecture. Using the physicists' language (cf. \cite{KW}), the $\xcoch(T)$-action on $K$ is the analogue of the 'tHooft operators on the Hitchin stack $\MHit_G$ while the $\xch(\Td)$-action on $L$ is the analogue of the Wilson operators on the Hitchin stack $\MHit_{\Gd}$. It is expected that under the classical limit of the geometric Langlands correspondence (i.e., an equivalence between the derived categories of coherent sheaves on $\MHit_G$ and $\MHit_{\Gd}$), the 'tHooft and Wilson operators are intertwined. The commutative diagram (\ref{d:introLD}) is the shadow of this expectation on the level of perverse cohomology sheaves, after passing first from the derived categories to the K-groups, and then from the K-groups to cohomology.

\subsubsection{An example}\label{sss:ex}
The first nontrivial example of the global Springer actions is a parabolic Hitchin fiber $M$ for $G=\SL(2)$, which is a union of two $\PP^1$'s intersecting transversally at two points. We write $\tilW=\ZZ\alpha^\vee\rtimes\jiao{s}$ where $\alpha^\vee$ is the unique positive coroot and $s$ is the reflection in $W=S_2$. Then the action of $\alpha^\vee$ and $s$ on $\cohog{2}{M}$, under the natural basis dual to the fundamental classes of the two $\PP^1$'s, are given by the matrices
\begin{equation*}
s_1=\left(\begin{array}{cc}1 & 2 \\0 & -1\end{array}\right);\alpha^\vee=\left(\begin{array}{cc}-1 & -2 \\2 & 3\end{array}\right).
\end{equation*}
In particular, we see that the action of $\alpha^\vee$ is unipotent but not identity (i.e., non-semisimple).

This example, together with its ``Langlands dual'' example, also gives an instance in which the commutative diagram (\ref{d:introLD}) proved Th. C carries nontrivial information. In fact, this example belongs to an interesting class of parabolic Hitchin fibers which are called {\em subregular}. We will study these subregular fibers in more detail in \cite{Subreg}.

\subsection{Applications}
There is an application of the global Springer theory to $p$-adic harmonic analysis, suggested by Ng\^o. For this we work over the ground field $k=\FF_q$ (although in the previous papers we assumed $k$ to be algebraically closed, this assumption is not necessary). 

The motivation of Springer's study of Weyl group representations, as the author understands, is their relationship with Green functions. For each conjugacy class $[w]$ in $W$, there is a Green function $Q_{[w]}$, which is the character value of certain Deligne-Lusztig representations on the unipotent elements of $G(\FF_q)$. For large $\textup{char}(\FF_q)$, we can work with nilpotent elements in the Lie algebra instead of unipotent elements in the group, and view $Q_{[w]}$ as a function on the nilpotent elements of $\frg(\FF_q)$. The work of Springer (\cite{Sp}) and Kazhdan (\cite{Kaz}) shows that
\begin{equation*}
Q_{[w]}(\gamma)=\Tr(\Frob_\gamma\circ w,\pi_*\Ql)
\end{equation*}
for any representative $w\in[w]$ and any nilpotent element $\gamma\in\frg^{\nil}(\FF_q)$.

In the global situation, we can consider the following quantity
\begin{equation}\label{eq:twtr}
\Tr(\Frob_{a,x}\circ\tilw,\fQl)
\end{equation}
for $\tilw\in\tilW$ and $(a,x)\in(\calA\times X)(\FF_q)$. This is essentially a product of local orbital integrals, one of which (at the place $x\in X(\FF_q)$) is the orbital integral of a {\em Deligne-Lusztig function} (which is a compactly supported function on $\frg(F_x)$ inflated from a usual Deligne-Lusztig virtual character). 

Kottwitz (\cite{Kot}) conjectured an identity between orbital integrals of Deligne-Lusztig functions for $G$ and its endoscopic groups. Using Th. B above, and taking twisted Frobenius traces as in (\ref{eq:twtr}), we expect to prove a Lie algebra analogue of Kottwitz's conjecture in the function field case. Note that Kottwitz's conjecture has already been proved by Kazhdan-Varshavsky in \cite{KV} using group-theoretic methods, based on Waldspurger's deep work on the Fourier transform of stable distributions.

Other applications to representation theory are joint work in progress with R.Bezrukavnikov and Y.Varshavsky. 

\subsection{Organization of the paper and remarks on the proofs}
In Sec. \ref{s:supp}, we prove Th. A about the support of simple constituents of $\pH^i(\fQl)$. This is the technical heart of all the subsequent study of $\fQl$. The proof is based on Ng\^o's idea in proving his ``Th\'eor\`eme du support'' in \cite{NgoFL}, which is the key geometric ingredient in his proof of the Fundamental Lemma. We then study the $\kappa$-decomposition of $\tfQl$ in Section \ref{ss:kappadecomp} and \ref{ss:kappaincenter}.

In Sec. \ref{s:endo}, we prove Th. B about the endoscopic decomposition. Before stating the theorem, we first make some remarks about how to generalize our results up to this point to quasi-split group schemes, because they will soon show up as endoscopic groups. The proof of Th. B is a bit lengthy. The complication results from the attempt to establish an isomorphism between {\em complexes}, not just perverse cohomology sheaves. The proof relies on Ng\^o's unpublished results on endoscopic correspondences, which we record in App. \ref{s:endocorr}. Eventually we reduce the proof to a calculation of the endoscopic correspondence over the generic locus, which essentially only involves the geometry of nodal curves. 

In Sec. \ref{s:LD}, we prove Th. C about Langlands duality. For this, we need an explicit description of the Picard stack $\calP$ and its Tate module, which we give in Sec. \ref{ss:PfVD}. This description uses the result of Donagi-Gaitsgory on the regular centralizer group scheme (\cite{DG}). The proof of Th. C has two major ingredients: one is the simple observation that $\Ext^1$ between middle extensions of local systems are determined by the $\Ext^1$ between the local systems; the other is a manipulation of the Abel-Jacobi map for curves.

In Sec. \ref{s:SL2}, we present the calculation of the $\tilW$-action on the cohomology of a subregular parabolic Hitchin fiber for $G=\SL(2)$. We also verify (partially) Th. C in this case in Sec. \ref{ss:pervSL2}.

\subsection*{Acknowledgment}
I would like thank B-C.Ng\^o for allowing me to use his unpublished results. I would like to thank my advisor R.MacPherson who, among many other things, taught me how to actually do calculations with cohomological correspondences. I would also like to thank R.Bezrukavnikov, R.Kottwitz and Y.Varshavsky for helpful discussions.


\section{The Support Theorem and consequences}\label{s:supp}

In this section, we study the perverse cohomology of the parabolic Hitchin complex. The main result is the Support Theorem (Th. \ref{th:supp}), which is a key technical result for later sections. From the Support Theorem and the results of Ng\^o in \cite{NgoFL}, we determine the supports of the $\kappa$-parts of the enhanced parabolic Hitchin complex.

\subsection{The Support Theorem}\label{ss:supp}

Consider the proper map $\tilf:\Mpar\to\tcA$ whose source is a smooth Deligne-Mumford stack. By the decomposition theorem \cite[Th. 6.2.5]{BBD}, we have a {\em non-canonical} decomposition
\begin{equation}\label{eq:simpleps}
\tfQl=\bigoplus_{i\in I}\calF_i[-n_i]
\end{equation}
where $I$ is a finite index set and each $\calF_i$ is a simple perverse sheaf on $\tcA$. 

The Support Theorem we will state is the analogue of the following classical statement: the direct image sheaf $\pi_*\Ql$ of the Grothendieck simultaneous resolution $\pi:\widetilde{\frg}\to\frg$ is the middle extension from its restriction to $\frg^{\rs}$.

Recall the following maps
\begin{equation*}
\xymatrix{\tcA\ar[r]_{q}\ar@/^1pc/[rr]^{\tilp} & \calA\times X\ar[r]_{p} & \calA}
\end{equation*}

\begin{theorem}\label{th:supp}
For each simple perverse sheaf $\calF_i$ appearing in the decomposition (\ref{eq:simpleps}), its support $Z$ is an irreducible component of $\tilp^{-1}(\tilp(Z))$. In particular, $\calF_i$ is the middle perverse extension of its restriction to $\tcA^{rs}$. Equivalently,
\begin{equation*}
\pH^*\tfQl=\tilj^{\rs}_{!*}\tilj^{\rs,*}(\pH^*\tfQl)
\end{equation*}
where $\tilj^{\rs}:\tcArs\hookrightarrow\tcA$ is the open embedding.
\end{theorem}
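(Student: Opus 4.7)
The plan is to adapt Ng\^o's proof of his ``Th\'eor\`eme du support'' from \cite[\S 7]{NgoFL} to the parabolic setting, where the base of the fibration is $\tcA$ rather than $\calA$. Let $\calF_i$ be a simple perverse constituent of $\pH^*\tfQl$ with support $Z\subset \tcA$, and let $\eta$ be the generic point of $\tilp(Z)\subset \calA$. The goal is to show $Z$ is an irreducible component of $\tilp^{-1}(\tilp(Z))$; once this is established, the middle extension statement follows because $\tilp(Z)\cap\calA^{\rs}$ is open dense in $\tilp(Z)$, hence $Z\cap\tcArs$ is open dense in $Z$.

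The first step is to set up the Tate module action. The Picard stack $\calP/\calA$ acts on $\Mpar$ over $\tcA$: the standard action of $\calP$ on $\MHit/\calA$ is pulled back along $\tilp:\tcA\to\calA$ and preserves the Borel reduction at $x$ because $\calP$ acts trivially on residues. Consequently the Tate module $V_\ell=V_\ell(\calP^0/\calA)$ acts on $\tfQl$ via cap product with classes pulled back from $\calA$, and this action is compatible with the decomposition~(\ref{eq:simpleps}).

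The heart of the argument is Ng\^o's freeness/codimension inequality. Let $d_\eta$ denote the abelian rank of $\calP^0$ at $\eta$, and $d_{\max}$ its generic abelian rank over $\calA$. Following \cite[\S 7.1--7.2]{NgoFL} mutatis mutandis, the properness of $\tilf$, the polarizability of $V_\ell$, and the nonvanishing of $\wedge^{2d_{\max}}V_\ell$ on the top perverse cohomology of the fibers combine to force
\[\codim_{\tcA}(Z)\le d_{\max}-d_\eta.\]
Crucially, because the $\calP$-action factors through $\tilp$, the amount of codimension constrained is controlled by $\calA_\delta$, not by vertical strata on $\tcA$. Combining with the hypothesis $\codim_\calA(\calA_\delta)\ge\delta$ valid on our chosen open $\calA$, and with $d_{\max}-d_\eta\le\delta$ for $\eta\in\calA_\delta$, one obtains $\codim_{\tcA}(Z)\le\codim_\calA(\tilp(Z))$. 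This forces $Z$ to be an irreducible component of $\tilp^{-1}(\tilp(Z))$, as claimed. The statement for $\fQl=q_*\tfQl$ then follows by pushing forward along the finite map $q$, which commutes with intermediate extension.

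The main obstacle is verifying the freeness inequality in the parabolic setting. In \cite{NgoFL} the fibers of the Hitchin map are $\calP^0$-torsors up to a finite abelian part, and the top cohomology argument is straightforward. Over $\tcA$, a generic fiber of $\tilf$ is an extension of the corresponding Hitchin fiber by a compact flag-like factor coming from the parabolic reduction, and one must verify that $\calP^0$ acts trivially on this factor (so that the top cohomology remains one-dimensional as a $\calP^0$-module) and that no extra classes coming from the cameral cover direction disturb the freeness of $\wedge^{2d_{\max}}V_\ell$. Once this compatibility is in place, the numerological game of \cite[\S 7]{NgoFL} transposes directly, and the rest of the proof is essentially formal.
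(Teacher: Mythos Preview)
Your approach is essentially the same as the paper's: both adapt Ng\^o's support-theorem machinery via the pulled-back Picard stack $\tcP=\tilp^*\calP$ acting on $\Mpar/\tcA$, then combine the resulting amplitude/codimension inequality with the hypothesis $\codim_{\calA}(\calA_\delta)\geq\delta$ to force $\codim_{\tcA}(Z)=\codim_{\calA}(\tilp(Z))$.

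However, the ``main obstacle'' you identify is a red herring, and your description of the generic parabolic fiber is incorrect. Over $\tcArs$ the fiber $\Mpar_{\tila}$ is \emph{isomorphic} to the ordinary Hitchin fiber $\MHit_a$: the choice of cameral point $\tilx$ over a regular semisimple $(a,x)$ singles out a unique compatible Borel reduction, so there is no ``flag-like factor'' at all. More to the point, the paper's proof shows that no such fiberwise analysis is needed. One applies \cite[Prop.~7.2.3]{NgoFL} as a black box to the $\tcP$-action, obtaining that the amplitude of $Z$ is at least $2(\dim(\calP/\calA)-\delta_Z)$; then Poincar\'e duality forces the top occurrence degree $n_+\geq\dim(\Mpar)+\dim(\calP/\calA)-\delta_Z$; and finally the single numerical input $\dim(\Mpar_{\tila})=\dim(\calP/\calA)$ (equidimensionality of parabolic Hitchin fibers, same dimension as the usual ones) yields $\codim_{\tcA}(Z)\leq\delta_Z$ directly. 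There is no freeness condition to verify and no worry about the cameral direction: the amplitude bound and the fiber-dimension bound are all that is used.
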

\begin{proof}
Recall that the global invariant $\delta: \calA\to \ZZ_{\geq0}$ is upper semi-continuous. Let $\delta_Z$ be the generic (minimal) value of $\delta$ on $\tilp(Z)$. Let us recall the notion of the amplitude of $Z$ following \cite[7.2]{NgoFL}. Let $\textup{occ}(Z)=\{n_i|i\in I,\Supp\calF_i=Z\}$. Then the amplitude of $Z$ is defined to be the difference between the largest and smallest elements in $\textup{occ}(Z)$.

Consider the $\tcP$-action on $\Mpar$ over $\tcA$. In this situation, we can apply \cite[Prop. 7.2.3]{NgoFL} to conclude that the amplitude of $Z$ is at least $2(\dim(\calP/\calA)-\delta_Z)$.

Now we can apply the argument of \cite[7.3]{NgoFL} to show that $\codim_{\tcA}(Z)\leq\delta_Z$. For completeness, we briefly reproduce the argument here. By Poincar\'e duality, the set $\textup{occ}(Z)$ is symmetric with respect to $\dim(\Mpar)$. Let $n_+$ be the largest element in $\textup{occ}(Z)$. Since the amplitude of $Z$ is at least $2(\dim(\calP/\calA)-\delta_Z)$, we conclude that $n_+\geq\dim(\Mpar)+\dim(\calP/\calA)-\delta_Z$. Suppose $\calF_j$ has support $Z$ and $n_j=n_+$. Let $U$ be an open dense subset of $Z$ over which $\calF_j$ is a local system placed in degree $-\dim(Z)$. Pick a point $\tila\in U(k)$. Since
\begin{equation*}
0\neq i^*_{\tila}\calF_j\subset \cohog{n_+-\dim(Z)}{\Mpar_{\tila},\Ql},
\end{equation*}
we necessarily have
\begin{equation*}
\dim(\Mpar)+\dim(\calP/\calA)-\delta_Z-\dim(Z)\leq n_+-\dim(Z)\leq2\dim(\Mpar_{\tila})=2\dim(\calP/\calA).
\end{equation*}
This implies that $\codim_{\tcA}(Z)\leq\delta_Z$. 

Recall from \cite[Rem. 3.5.6]{GSI} that $\codim_{\calA}(\calA_\delta)\geq\delta$. This implies that $\codim_{\tcA}(Z)\geq\codim_{\calA}(\tilp(Z))\geq\delta_Z$. Therefore the inequalities must be equalities, i.e.,
\begin{equation*} 
\codim_{\tcA}(Z)=\codim_{\calA}(\tilp(Z))=\delta_Z.
\end{equation*}
This forces that $q(Z)=\tilp(Z)\times X$ and that $Z$ is an irreducible component of $q^{-1}(\tilp(Z)\times X)=\tilp^{-1}(\tilp(Z))$.

Since $(\{a\}\times X)^{\rs}$ is dense in $\{a\}\times X$ for any $a\in\calA(k)$, we conclude that $q(Z)^{\rs}$ is dense in $q(Z)$ and therefore $Z^{\rs}$ is dense in $Z$. Therefore the simple perverse sheaf $\calF_i$ is the middle extension of its restriction on $Z^{\rs}$. This completes the proof.
\end{proof}

\begin{remark} Using the same arguments, we can show that the $\fQl$ decompositions as a direct sum of shifted simple perverse sheaves, and each simple perverse sheaf has support of the form $Z'\times X$, where $Z'$ is an irreducible closed subscheme of $\calA$. In particular,
\begin{equation*}
\pH^*\fQl=j^{\rs}_{!*}j^{\rs,*}(\pH^*\fQl)
\end{equation*}
where $j^{\rs}:(\calA\times X)^{\rs}\hookrightarrow\calA\times X$ is the open embedding.
\end{remark}


\subsection{The $\kappa$-decomposition}\label{ss:kappadecomp}
By \cite[Prop. 4.4.6]{GSI}, there is an action of $\xcoch(T)$ on the enhanced parabolic Hitchin complex $\tfQl$. By \cite[Rem. 4.3.7]{GSI}, over $\tcArs$, this action is induced by the morphism
\begin{equation}\label{eq:latticePsection}
s:\xcoch(T)\times\tcArs\to\tcP\to\calP.
\end{equation}
This morphism induces a surjective homomorphism of sheaves of abelian groups
\begin{equation}\label{eq:latticetopi0}
\pi_0(s):\xcoch(T)\to\pi_0(\tcP/\tcA)=\tilp^*\pi_0(\calP/\calA).
\end{equation}
where $\tilp:\tcA\to\calA$ is the projection and $\xcoch(T)$ stands for the constant sheaf on $\tcA$ with stalks $\xcoch(T)$.

On the other hand, since $\tcP$ acts on $\Mpar$ over $\tcA$, by homotopy invariance (see \cite[Lemme 3.2.3]{LauN}), it induces an action of $\pi_0(\tcP/\tcA)$ on $\pH^i\tfQl$, by the same argument as in \cite[6.2.1]{NgoFL}.

\begin{lemma}\label{l:latticefinite}
For each $i\in\ZZ$, the action of $\xcoch(T)$ on $\pH^i\tfQl$ factors through the $\pi_0(\tcP/\tcA)$-action via $\pi_0(s)$ in (\ref{eq:latticetopi0}). In particular, the action of $\xcoch(T)$ on $\pH^i\tfQl$ factors through a finite quotient, hence semisimple.
\end{lemma}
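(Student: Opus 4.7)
The plan is to reduce the statement to the open locus $\tcArs$, where the factorization is essentially built into the construction of the $\xcoch(T)$-action, and then extend the factorization to all of $\tcA$ using the Support Theorem (Th.~\ref{th:supp}).

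First I would observe that on $\tcArs$, by \cite[Rem. 4.3.7]{GSI}, the action of $\xcoch(T)$ on $\tfQl|_{\tcArs}$ is obtained by restricting the $\tcP$-action on $\tfQl$ along the section $s:\xcoch(T)\times\tcArs\to\tcP|_{\tcArs}$. Applying the homotopy-invariance principle \cite[Lemme 3.2.3]{LauN} to the smooth group scheme $\tcP/\tcA$ (as Ng\^o does in \cite[6.2.1]{NgoFL}), the $\tcP$-action on $\pH^i\tfQl$ is trivial on the identity component of every fiber. Hence the induced $\xcoch(T)$-action on $\pH^i(\tfQl)|_{\tcArs}$ factors through $\pi_0(s):\xcoch(T)\to\pi_0(\tcP/\tcA)|_{\tcArs}$ by construction, and so does the $\xcoch(T)$-action on $\pH^i\tfQl$ restricted to $\tcArs$.

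Next I would invoke Theorem~A: every simple constituent of $\pH^i\tfQl$ is the middle extension of its restriction to $\tcArs$, so the functor $\tilj^{\rs,*}$ is fully faithful on the full subcategory of perverse sheaves of the form $\tilj^{\rs}_{!*}\tilj^{\rs,*}\pH^i\tfQl$. In particular, the endomorphism $\lambda-\pi_0(s)(\lambda)$ of $\pH^i\tfQl$, which vanishes after applying $\tilj^{\rs,*}$ by the previous step, must itself vanish. This gives the desired factorization through $\pi_0(\tcP/\tcA)=\tilp^*\pi_0(\calP/\calA)$ globally on $\tcA$.

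For the finiteness claim, I would use that $\pi_0(\calP/\calA)$ is a constructible sheaf of finite abelian groups on $\calA$ (this is part of Ng\^o's analysis of $\calP$ over the anisotropic locus; since we have restricted to the open subset of $\Aa$ fixed in the introduction, only finitely many stalk values occur, and we may pick a stratification over which $\pi_0(\calP/\calA)$ is locally constant with finite stalks). Thus the image of $\pi_0(s):\xcoch(T)\to\Gamma(\tcA,\tilp^*\pi_0(\calP/\calA))$ relevant to the action on any given perverse sheaf is a finite quotient of $\xcoch(T)$; semisimplicity then follows because any representation of a finite group on a $\Ql$-linear perverse sheaf is semisimple. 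The main subtlety I anticipate is bookkeeping around the middle-extension argument, namely justifying that a morphism between perverse sheaves is zero as soon as its restriction to the dense open $\tcArs$ is zero, which is immediate from Theorem~A but must be applied to each simple constituent appearing in the decomposition~(\ref{eq:simpleps}).
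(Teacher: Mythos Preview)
Your proposal is correct and follows essentially the same approach as the paper's proof: reduce to $\tcArs$ via the Support Theorem, observe that over $\tcArs$ the $\xcoch(T)$-action arises by construction from the section $s$ and hence factors through $\pi_0(s)$, and conclude finiteness from the constructibility of $\pi_0(\calP/\calA)$. The paper's version is terser (it simply says ``it suffices to check over $\tcArs$''), while you spell out the middle-extension full-faithfulness argument explicitly, but the underlying reasoning is identical.
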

\begin{proof}
By Th. \ref{th:supp}, it suffices to check this statement over $\tcArs$. Over $\tcArs$, the action of $\xcoch(T)$ comes from the map $s$ in (\ref{eq:latticePsection}), therefore the action of $\xcoch(T)$ on $\tfQl|_{\tcArs}$ factors through the homomorphism $\pi_0(s)$ in (\ref{eq:latticetopi0}). Since $\pi_0(\calP/\calA)$ is a constructible sheaf of {\em finite} abelian groups (see \cite[Def. 3.2.10]{GSI}), the homomorphism $\pi_0(s)$ necessarily factors through a finite quotient of $\xcoch(T)$.
\end{proof}

\begin{remark}
In contrast to the semisimplicity of the $\xcoch(T)$-action on the perverse cohomology sheaves $\pH^i\tfQl$, the action of $\xcoch(T)$ on the ordinary cohomology sheaves $\bR^i\tfQl$ is {\em not} semisimple. We will give an example of this non-semisimplicity in Sec. \ref{s:SL2}.
\end{remark}

By Lem. \ref{l:latticefinite}, it makes sense to decompose the object $\tfQl\in D^b_c(\tcA)$ into generalized eigen-subcomplexes of $\xcoch(T)$:
\begin{equation}\label{eq:kappadecomp}
\tfQl=\bigoplus_{\kappa\in\hatT(\Ql)}(\tfQl)_\kappa.
\end{equation}
where $\hatT=\Hom(\xcoch(T),\GG_m)$ is an algebraic torus over $\Ql$ and $\kappa$ runs over finite order elements in $\hatT(\Ql)$. Each subcomplex $(\tfQl)_\kappa$ is characterized by the property that the action of $\xcoch(T)$ on $\pH^i(\tfQl)_\kappa$ factors through the character $\kappa:\xcoch(T)\to\Ql^\times$. Over $\tcArs$, the action of $\xcoch(T)$ comes from the map $s$ in (\ref{eq:latticePsection}), therefore this decomposition is also the decomposition according to generalized eigen-subcomplexes of $\pi_0(\tcP/\tcA)$. Hence, passing to perverse cohomology sheaves, the decomposition (\ref{eq:kappadecomp}) coincides with the $\kappa$-decomposition defined by Ng\^o in \cite[6.2]{NgoFL}. Similarly, we have a decomposition of $\fQl$ into $\kappa$-summands. 

\begin{defn}\label{def:stpart}
The {\em stable part} $(\tfQl)_{\st}$ of the complex $\tfQl$ is defined as the direct summand in the decomposition (\ref{eq:kappadecomp}) corresponding to $\kappa=1$.
\end{defn}
In other words, $(\tfQl)_{\st}$ is the direct summand of $\tfQl$ on which $\xcoch(T)$ acts unipotently.

Following \cite[6.2.3]{NgoFL}, for $\kappa\in\hatT(\Ql)$, let $\tcArs_{\kappa}$ the locus of $(a,\tilx)\in\tcArs$ such that $\kappa:\xcoch(T)\to\Ql^\times$ factors through the homomorphism $s(a,\tilx):\xcoch(T)\to\pi_0(\calP_a)$. Let $\tcA_\kappa$ be the closure of $\tcArs_\kappa$ in $\tcA$.

\begin{prop}\label{p:kappasupp}
The support of any simple constituent of $\oplus_i\pH^i(\tfQl)_\kappa$ is an irreducible component of $\tcA_\kappa$.
\end{prop}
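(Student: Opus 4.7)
The plan is to follow the strategy of the proof of Theorem \ref{th:supp}, refining the amplitude estimate to the $\kappa$-eigen-subcomplex and matching the resulting codimension with Ng\^o's calculation of the top-dimensional components of $\calA_\kappa$.

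Let $\calF$ be a simple perverse sheaf appearing in $\oplus_i \pH^i(\tfQl)_\kappa$ with support $Z$. The first step is to show $Z \subseteq \tcA_\kappa$. By Theorem \ref{th:supp}, $Z$ is the closure of $Z \cap \tcArs$, so it suffices to verify $Z \cap \tcArs \subseteq \tcArs_\kappa$. Over $\tcArs$, the $\xcoch(T)$-action on $\tfQl|_{\tcArs}$ factors through $\pi_0(s):\xcoch(T) \to \pi_0(\tcP/\tcA)|_{\tcArs}$ by (\ref{eq:latticetopi0}), and since $\calF$ is a simple constituent of the $\kappa$-generalized eigen-subcomplex, $\xcoch(T)$ acts on $\calF|_{\tcArs}$ through the character $\kappa$. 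Hence at every geometric point $(a,\tilx) \in Z \cap \tcArs$, $\kappa$ must factor through $s(a,\tilx):\xcoch(T) \to \pi_0(\calP_a)$, which is precisely the defining condition of $\tcArs_\kappa$. Taking closures gives $Z \subseteq \tcA_\kappa$.

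The second step is to upgrade this containment to the statement that $Z$ is an irreducible component of $\tcA_\kappa$. For this I would invoke Ng\^o's refined amplitude estimate for the $\kappa$-part (\cite[Prop. 7.2.3]{NgoFL}): applied to the $\tcP$-action on $\Mpar$ restricted to the $\kappa$-subcomplex, it gives a lower bound on the amplitude of $Z$ of the form $2(\dim(\calP/\calA) - r_\kappa)$, where $r_\kappa$ is Ng\^o's codimension invariant satisfying $\calA_\kappa \subseteq \calA_{\geq r_\kappa}$ and such that the top-dimensional irreducible components of $\calA_\kappa$ have codimension exactly $r_\kappa$ in $\calA$. Running the dimension computation from the proof of Theorem \ref{th:supp} with this improved amplitude bound in place of $2(\dim(\calP/\calA) - \delta_Z)$ yields $\codim_\tcA(Z) \leq r_\kappa$. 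On the other hand, $\tilp(Z) \subseteq \calA_\kappa \subseteq \calA_{\geq r_\kappa}$ combined with $\codim_\calA(\calA_{\geq r_\kappa}) \geq r_\kappa$ gives $\codim_\tcA(Z) \geq \codim_\calA(\tilp(Z)) \geq r_\kappa$, so all inequalities are equalities: $\codim_\tcA(Z) = \codim_\calA(\tilp(Z)) = r_\kappa$. Hence $\tilp(Z)$ is a top-dimensional irreducible component of $\calA_\kappa$; since by Theorem \ref{th:supp}, $Z$ is an irreducible component of $\tilp^{-1}(\tilp(Z))$, and since the identification $\pi_0(\tcP/\tcA) = \tilp^*\pi_0(\calP/\calA)$ yields $\tcA_\kappa = \tilp^{-1}(\calA_\kappa)$, we conclude that $Z$ is an irreducible component of $\tcA_\kappa$.

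The main obstacle is transferring the refined amplitude estimate from Ng\^o's non-parabolic setting to the parabolic one: one must check that Ng\^o's argument for the $\kappa$-part of the $\calP$-action on $\MHit$ extends to the $\tcP$-action on $\Mpar$ over $\tcA$. This should be essentially formal, as the $\tcP$-action is constructed by pullback of the $\calP$-action along $\tilp$, and the amplitude estimate is a pointwise assertion about the $\pi_0$-equivariant structure on the cohomology of fibers; the extra parabolic datum $\tilx$ enters only in the choice of the enhanced base and not in the fibrewise cohomology computation.
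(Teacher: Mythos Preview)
Your first step (showing $Z\subseteq\tcA_\kappa$) is fine and matches how one begins. The gap is in the second step: the amplitude estimate you cite, \cite[Prop.~7.2.3]{NgoFL}, does \emph{not} give a $\kappa$-refined bound. It says only that the amplitude of any support $Z$ is at least $2\dim_Z(\text{abelian part of }\calP)=2(\dim(\calP/\calA)-\delta_Z)$, independently of $\kappa$. Running the argument of Theorem~\ref{th:supp} with this bound yields $\codim_{\tcA}(Z)=\delta_Z$ and that $Z$ is a component of $\tilp^{-1}(\tilp(Z))$, exactly as before---but it does not force $\delta_Z=r_\kappa$. A priori $Z$ could be a proper closed subvariety of a component of $\tcA_\kappa$ lying in a deeper $\delta$-stratum, and nothing in your argument excludes this. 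Replacing $\delta_Z$ by $r_\kappa$ in the amplitude bound is precisely what you would need, but assuming it is circular.

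What is actually required beyond the raw amplitude estimate is Ng\^o's ``libert\'e'' argument (the top cohomology of the $\kappa$-part is supported exactly on the components of $\calA_\kappa$), which is the substantive content of \cite[Corollaire~7.1.18]{NgoFL}. The paper's proof sidesteps redoing any of this: it uses Theorem~\ref{th:supp} to reduce to $\tcArs$, observes that $\Mpar|_{\tcArs}\to\tcArs$ is literally the base change of the ordinary Hitchin fibration $\MHit\to\calA$ along $\tcArs\to\calA$, and then invokes \cite[Corollaire~7.1.18]{NgoFL} as a black box (the hypotheses being verified in \cite[7.6]{NgoFL}). Your final paragraph already identifies the right mechanism---the parabolic data disappear over $\tcArs$---so the fix is simply to cite Ng\^o's full $\kappa$-support theorem rather than attempt to rederive it from the amplitude estimate alone.
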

\begin{proof}
By Th. \ref{th:supp}, it suffices to show that for any simple constituent $\calF$ of $\oplus_i\pH^i(\tfQl)_\kappa|_{\tcArs}$, the support of $\calF$ is an irreducible component of $\tcArs_\kappa$. 

We would like to apply the general result \cite[Corollaire 7.1.18]{NgoFL} to the fibration $\tilf^{\rs}:\Mpar|_{\tcArs}\to\tcArs$ together with the action of $\tcP|_{\tcArs}$. Note that $\tilf^{\rs}:\Mpar|_{\tcArs}\to\tcArs$ is the base change of the usual Hitchin fibration $f^{\Hit}:\MHit\to\calA$, therefore all the conditions in \cite[Corollaire 7.1.18]{NgoFL} are satisfied by the discussion in \cite[7.6]{NgoFL}. We conclude that the support of $\calF$ is an irreducible component of $\tcArs_\kappa$, and the proposition follows.
\end{proof}

Let $W_\kappa$ be the stabilizer of $\kappa$ in $W$. Then according to \cite[Prop. 4.4.6]{GSI}, $(\tfQl)_{\kappa}$ has a natural $\tilW_\kappa:=\xcoch(T)\rtimes W_\kappa$-equivariant structure with respect to the action of $\tilW_\kappa$ on $\tcA_\kappa$ via the quotient $\tilW_{\kappa}\to W_\kappa$. 

\begin{remark}\label{rm:Akappa}
Let $\calA_{|\kappa|}\subset\calA$ be the image of $\tcA_\kappa$. Then, as the notation suggests, $\calA_{|\kappa|}$ only depends on the $W$-orbit $|\kappa|$ of $\kappa\in\hatT(\Ql)$. The finite morphism $q_\kappa:\tcA_\kappa\to\calA_{|\kappa|}\times X$ is invariant under $W_\kappa$. However, in general, $q_\kappa$ is {\em not} a $W_\kappa$-branched cover.
\end{remark}


\subsection{The $\kappa$-part for $\kappa\in Z\hatG(\Ql)$}\label{ss:kappaincenter}
Let $\hatG$ be the connected Langlands dual group of $G$ defined over $\Ql$, then $\hatT$ is a maximal torus in $\hatG$. In this subsection, we concentrate on the direct summands $(\tfQl)_\kappa$ where $\kappa\in Z\hatG(\Ql)$, the center of $\hatG$. Our goal is to show that such $(\tfQl)_\kappa$ is naturally isomorphic to the stable part of $\tfQl$ (see Def. \ref{def:stpart}).

\subsubsection{Decomposition according to connected components} Recall from \cite[Sec. 3.5]{GSII} that the connected components of $\Bun^{\parab}_G$ are naturally parametrized by the finite abelian group $\Omega=\xcoch(T)/\ZZ\Phi^\vee$. For each $\omega\in\Omega$, let $\Mpar_\omega$ be the corresponding component and $\tilf_\omega:\Mpar_\omega\to\tcA$ be the restriction of $\tilf$. Let $e\in\Omega$ be the identity element. By \cite[Lem. 3.4.2]{GSII}, the correspondences $\calH_{\tilw}$ for $\tilw\in\Wa$ preserve the components of $\Mpar$ because the correspondence $\Hecke^{\Bun}_{\leq\tilw}$ preserves the componenets of $\Bunpar_G$; i.e.,
\begin{equation*}
\overleftrightarrow{h_{\tilw}}(\calH_{\tilw})\subset\bigsqcup_{\omega\in\Omega}\Mpar_\omega\times_{\calA\times X}\Mpar_\omega.
\end{equation*}
Therefore, each $\tilf_{\omega,*}\Ql$ has a $\Wa$-equivariant structure compatible with the $W$-action on $\tcA$. In particular, the coroot lattice $\ZZ\Phi^\vee$ acts on each complex $\tilf_{\omega,*}\Ql$, and we similarly have the $\overline{\kappa}$-decomposition according to the characters of $\ZZ\Phi^\vee$, here $\overline{\kappa}\in\hatT^{\ad}(\Ql)$ and $\hatT^{\ad}$ is the image of $\hatT$ in the adjoint form of $\hatG$. Let $(\tilf_{\omega,*}\Ql)_{\st}$ be the part corresponding to $\overline{\kappa}=1\in\hatT^{\ad}(\Ql)$, i.e., the part on which the $\ZZ\Phi^\vee$-action is unipotent. Each $(\tilf_{\omega,*}\Ql)_{\st}$ is direct summand of $\tfQl$. The direct summand $(\tilf_{e,*}\Ql)_{\st}$ carries a $\Wa$-action.

When $\kappa\in Z\hatG(\Ql)$, it is fixed by the $W$-action on $\hatT$, hence $(\tfQl)_\kappa$ carries a $\tilW$-action.

\begin{prop}\label{p:stable} Let $\kappa\in Z\hatG(\Ql)$. Then the projection to the direct summand $(\tilf_{e,*}\Ql)_{\st}$ defines a $\Wa$-equivariant isomorphism
\begin{equation}\label{eq:kappaisstable}
(\tfQl)_\kappa\cong(\tilf_{e,*}\Ql)_{\st}.
\end{equation}
In particular, there is a canonical $\Wa$-equivariant isomorphism
\begin{equation}\label{eq:kappastable}
(\tfQl)_\kappa\cong(\tfQl)_{\st}.
\end{equation}
\end{prop}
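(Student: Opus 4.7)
The plan is to reduce the claim to a calculation on the level of perverse cohomology, where the $\xcoch(T)$-action becomes a regular permutation representation of the finite quotient $\Omega$, and then lift back to $D^b_c(\tcA)$ using the semisimplicity provided by the decomposition theorem. First, since $\kappa\in Z\hatG(\Ql)$, the character $\kappa:\xcoch(T)\to\Ql^\times$ is trivial on the coroot sublattice $\ZZ\Phi^\vee$. The $\xcoch(T)$-generalized eigen-decomposition of $\tfQl$ refines the $\ZZ\Phi^\vee$-generalized eigen-decomposition, so $(\tfQl)_\kappa$ lies in the unipotent part for $\ZZ\Phi^\vee$. Since $\ZZ\Phi^\vee\subset\Wa$ preserves each component $\Mpar_\omega$, that unipotent part is precisely $\bigoplus_{\omega\in\Omega}(\tilf_{\omega,*}\Ql)_{\st}$ by the very definition of the stable parts. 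Hence the projection $p_e:\bigoplus_\omega(\tilf_{\omega,*}\Ql)_{\st}\to(\tilf_{e,*}\Ql)_{\st}$ onto the identity component restricts to a morphism
\[p_e|_{(\tfQl)_\kappa}:(\tfQl)_\kappa\to(\tilf_{e,*}\Ql)_{\st},\]
which is $\Wa$-equivariant because the Hecke correspondences underlying the $\Wa$-action preserve the component decomposition of $\Mpar$ by \cite[Lem. 3.4.2]{GSII}.

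To show $p_e|_{(\tfQl)_\kappa}$ is an isomorphism, I first reduce to the perverse-cohomology statement. By the decomposition theorem, $\tfQl$ is semisimple in $D^b_c(\tcA)$, hence so are its direct summands $(\tfQl)_\kappa$ and $(\tilf_{e,*}\Ql)_{\st}$. Any morphism between semisimple complexes that induces an isomorphism on every $\pH^i$ has vanishing cone, and is therefore itself an isomorphism. It suffices then to prove that $\pH^i p_e|_{(\tfQl)_\kappa}$ is an isomorphism for every $i$.

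On perverse cohomology, Lem. \ref{l:latticefinite} says the $\xcoch(T)$-action factors through a finite quotient, so in particular $\ZZ\Phi^\vee$ acts trivially on $\bigoplus_\omega\pH^i(\tilf_{\omega,*}\Ql)_{\st}$. Consequently the $\xcoch(T)$-action descends to an honest $\Omega$-action, and any lift of $\omega'\in\Omega$ to $\xcoch(T)$ gives an isomorphism $\pH^i(\tilf_{\omega,*}\Ql)_{\st}\isom\pH^i(\tilf_{\omega+\omega',*}\Ql)_{\st}$ independent of the lift (different lifts differ by an element of $\ZZ\Phi^\vee$, which now acts trivially). This identifies $\bigoplus_\omega\pH^i(\tilf_{\omega,*}\Ql)_{\st}$ with the regular representation of $\Omega$ over the space $\pH^i(\tilf_{e,*}\Ql)_{\st}$, and for any character $\kappa$ of $\Omega$ the standard decomposition of the regular representation exhibits the $\kappa$-eigen-part as isomorphic to $\pH^i(\tilf_{e,*}\Ql)_{\st}$ precisely via $p_e$.

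This establishes the $\Wa$-equivariant isomorphism (\ref{eq:kappaisstable}). The canonical isomorphism (\ref{eq:kappastable}) is then the composition
\[(\tfQl)_\kappa\xrightarrow{\sim}(\tilf_{e,*}\Ql)_{\st}\xleftarrow{\sim}(\tfQl)_{\st},\]
where the two arrows are $p_e$ applied to $\kappa$ and to the trivial character respectively, both $\Wa$-equivariant isomorphisms by (\ref{eq:kappaisstable}). The main substantive input is Lem. \ref{l:latticefinite}: it upgrades the unipotent $\ZZ\Phi^\vee$-action on the stable parts to a trivial action on perverse cohomology, without which the regular-representation argument would not close.
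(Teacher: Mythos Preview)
Your proof is correct and follows essentially the same approach as the paper: both identify the $\ZZ\Phi^\vee$-unipotent part of $\tfQl$ with $\bigoplus_{\omega}(\tilf_{\omega,*}\Ql)_{\st}$, reduce to perverse cohomology via the decomposition theorem, and then use that the $\xcoch(T)$-action on $\bigoplus_{\omega}\pH^i(\tilf_{\omega,*}\Ql)_{\st}$ descends to the regular representation of $\Omega$ on $\pH^i(\tilf_{e,*}\Ql)_{\st}$, so that the projection to the $e$-summand picks out the $\kappa$-isotypic line. The only cosmetic difference is a sign convention: the paper tracks that the action of $\lambda$ (with image $\omega_1\in\Omega$) sends the $\omega$-summand to the $(\omega-\omega_1)$-summand because the action on complexes is by pull-back, yielding the inverse regular representation $\Ql[\Omega]'$ rather than $\Ql[\Omega]$; this does not affect the argument.
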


\begin{proof}
We first claim that the following two direct summands of $\tfQl$ are the same
\begin{equation}\label{eq:twosummand}
\bigoplus_{\kappa\in Z\hatG(\Ql)}(\tfQl)_\kappa=\bigoplus_{\omega\in\Omega}(\tilf_{\omega,*}\Ql)_{\st}.
\end{equation}
In fact, both sides are the subcomplex of $\tfQl$ on which $\ZZ\Phi^\vee$ acts unipotently.

The projection to the direct summand $(\tilf_{e,*}\Ql)_{\st}$ is clearly equivariant under $\Wa$. To check (\ref{eq:kappaisstable}) is an isomorphism, it suffices to check that it induces an isomorphism on the level of perverse cohomology. The equality (\ref{eq:twosummand}) implies an equality
\begin{equation}\label{eq:twosumperv}
\bigoplus_{\kappa\in Z\hatG(\Ql)}\pH^*(\tfQl)_\kappa=\bigoplus_{\omega\in\Omega}\pH^*(\tilf_{\omega,*}\Ql)_{\st}.
\end{equation}

On one hand, the action of $\xcoch(T)$ on the LHS of (\ref{eq:twosumperv}) factors through the quotient $\Omega$, and acts on each direct summand $\pH^*(\tfQl)_\kappa$ via the character $\kappa$. On the other hand, for $\lambda\in\xcoch(T)$ with image $\omega_1\in\Omega$, we can view $\omega_1$ as an element in $\Omega_{\bI}$ (see \cite[Sec. 3.3]{GSII}), hence an element in $\tilW$. Since $\lambda\Wa=\omega_1\Wa$, the $\lambda$-action permutes the summands on the RHS of (\ref{eq:twosumperv}) in the same way as $\omega_1$ permutes the summands: $\pH^*(\tilf_{\omega,*}\Ql)_{\st}$ is sent to $\pH^*(\tilf_{\omega-\omega_1,*}\Ql)_{\st}$ (the minus sign appears because the action of $\omega_1$ on complexes are given by the {\em pull-back} functor $R_{\omega_1}^*$). In other words, as $\Omega$-modules, we have
\begin{equation}\label{eq:ddd}
\bigoplus_{\omega\in\Omega}\pH^*(\tilf_{\omega,*}\Ql)_{\st}\cong\pH^*(\tilf_{e,*}\Ql)_{\st}\otimes\Ql[\Omega]'
\end{equation}
Here the $\Ql[\Omega]'$ is the group algebra of $\Omega$ with the inverse action of the regular representation of $\Omega$, and the $\Omega$-action on the RHS of (\ref{eq:ddd}) is trivial on $(\tilf_{e,*}\Ql)_{\st}$.
Combining (\ref{eq:twosumperv}) and (\ref{eq:ddd}) we conclude that there is an $\Omega$-equivariant isomorphism
\begin{equation*}
\bigoplus_{\kappa\in Z\hatG(\Ql)}\pH^*(\tfQl)_\kappa\cong\pH^*(\tilf_{e,*}\Ql)_{\st}\otimes\Ql[\Omega]'.
\end{equation*}
In other words,
\begin{equation*}
\pH^*(\tfQl)_\kappa\cong\pH^*(\tilf_{e,*}\Ql)_{\st}\otimes\Ql(\kappa)
\end{equation*}
where $\Ql(\kappa)$ is the one-dimensional summand of $\Ql[\Omega]'$ where $\Omega$ acts through the character $\kappa$. Since the projection $\Ql(\kappa)\hookrightarrow\Ql[\Omega]'\twoheadrightarrow\Ql e$ is an isomorphism, the projection to the direct summand $\pH^*(\tilf_{e,*}\Ql)_{\st}$ also induces an isomorphism
\begin{equation*}
\pH^*(\tfQl)_\kappa\cong\pH^*(\tilf_{e,*}\Ql)_{\st}\otimes\Ql(\kappa)\isom\pH^*(\tilf_{e,*}\Ql)_{\st}.\qedhere
\end{equation*}
\end{proof}

\begin{remark}
The isomorphism $(\tfQl)_\kappa\cong(\tfQl)_{\st}$ in Prop. \ref{p:stable} is $\Wa$-equivariant but {\em not} $\tilW$-equivariant for $\kappa\neq1$. However, the unipotent parts of the actions of $\xcoch(T)$ on $(\tfQl)_\kappa$ and $(\tfQl)_{\st}$ are intertwined under the isomorphism (\ref{eq:kappastable}). In fact, since $\xcoch(T)_{\QQ}=\QQ\Phi^\vee$, the logarithm of the unipotent part of the $\xcoch(T)$-action is determined by its restriction to $\ZZ\Phi^\vee$.
\end{remark}


\section{The Endoscopic Decomposition Theorem}\label{s:endo}
In this section, we study the $\kappa$-part of the enhanced parabolic Hitchin complex using endoscopic groups of $G$. The main result is the Endoscopic Decomposition Theorem (Th. \ref{th:endo}), which reduces the study of such a $\kappa$-part to the study of the stable parts of the enhanced parabolic Hitchin complexes for the endoscopic groups of $G$. We expect to apply this theorem to the harmonic analysis of $p$-adic groups in the future. In the proof, we need the notion of endoscopic correspondences due to Ng\^o (unpublished). For the reader's convenience, we record Ng\^o's results on endoscopic correspondences in App. \ref{s:endocorr}.

\subsection{Remarks on quasi-split groups}\label{ss:qs}
In this subsection, we briefly explain how the results in \cite[Sec. 4]{GSI} generalize to quasi-split reductive group schemes over $X$.

We fix a pinning $(B,T,x_{\alpha})$ of $G$, where $T$ is a maximal torus contained in the Borel $B$ and $x_{\alpha}\in\frg_{\alpha}-\{0\}$ for every simple root $\alpha$. We identify the outer automorphism group $\Out(G)=\Aut(G)/G^{ad}$ with the subgroup of $\Aut(G)$ stabilizing this pinning.

Let $X_\Theta\to X$ be an \'etale Galois cover with Galois group $\Theta$ and let $\rho$ be a homomorphism $\rho:\Theta\to\Out(G)\subset\Aut(G)$. This homomorphism gives a quasi-split form of $G$:
\begin{equation*}
H=G_{\rho}:=X_\Theta\twtimes{\Theta,\rho}G.
\end{equation*}
which is a connected reductive group scheme over $X$ with a Borel subgroup scheme $B_H=X_{\Theta}\twtimes{\Theta,\rho}B$.

The definition of the parabolic Hitchin moduli stack $\Mpar_H$ extends to the case of the quasi-split group scheme $H$ in a straightforward manner. We can also formulate it using the curve $X_\Theta$: the stack $\Mpar_H$ classifies parabolic Hitchin quadruples $(y,\calE_\Theta,\varphi_\Theta,\calE^B_{y})$ for the curve $X_\Theta$ and the constant group $G$, together with a $\Theta$-equivariant structure.

Since $\Out(G)$ acts on $\frt$ and $\frc$, $\Theta$ also acts on them via $\rho$. Let
\begin{eqnarray*}
\frt_{H,D}&:=&X_\Theta\twtimes{\Theta,\rho}\frt_D;\\
\frc_{H,D}&:=&X_\Theta\twtimes{\Theta,\rho}\frc_D.
\end{eqnarray*}
We can define the Hitchin base $\AHit_H$ as the affine space $\cohog{0}{X,\frc_{H,D}}=\cohog{0}{X_\Theta,\frc_D}^{\Theta}$. We can also define the enhanced Hitchin base $\tcA_\Theta$ as the Cartesian product $(\AHit_H\times X)\times_{\frc_{H,D}}\frt_{H,D}$. We also have the parabolic Hitchin fibration
\begin{equation*}
\fpar_H:\Mpar_H\xrightarrow{\tilf_H}\tcA_H\xrightarrow{q_H}\AHit_H\times X.
\end{equation*}

Let $\theta:\tcA_{H,\Theta}=\tcA_{H}\times_XX_\Theta\to\tcA_{H}$ be the projection. Note that $W\rtimes^{\rho}\Theta$ acts on $\tcA_{H,\Theta}$ with GIT quotient equal to $\AHit_H\times X$.

In the following, we restrict to the open subset $\calA_H\subset\calA^{\Hit,\ani}_H$ as in \cite[Rem. 3.5.6]{GSI}. Now we can state the counterpart of \cite[Th. 4.4.3]{GSI} (or rather \cite[Prop. 4.4.6]{GSI}) in the case of quasi-split group schemes.

\begin{theorem}\label{th:qsaction}
There is a natural $\tilW\rtimes^{\rho}\Theta$-equivariant structure on the complex $\theta^*\tfHQl$, compatible with the action of $\tilW\rtimes^{\rho}\Theta$ on $\tcA_{H,\Theta}$ via the quotient $\tilW\rtimes^{\rho}\Theta\twoheadrightarrow W\rtimes^{\rho}\Theta$. Here the $\Theta$-action on $\theta^*\tfHQl$ is the tautological one given by the $\Theta$-torsor $\theta:\tcA_{H,\Theta}\to\tcA_H$.
\end{theorem}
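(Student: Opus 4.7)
The strategy is to deduce the theorem from its analogue for the constant group $G$, already established in \cite[Th.~4.4.3]{GSI} and \cite[Prop.~4.4.6]{GSI}, by passing to the \'etale cover $X_\Theta$ on which $H$ becomes split and isomorphic to the constant group scheme $G$.

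First I would identify the relevant objects after pullback to $X_\Theta$. Using the alternative description of $\Mpar_H$ in terms of $\Theta$-equivariant parabolic Hitchin data on $X_\Theta$ for the constant group $G$ mentioned earlier in this subsection, one checks that the fiber product $\Mpar_H \times_{\tcA_H} \tcA_{H,\Theta}$ is canonically identified with (an open substack of) the parabolic Hitchin moduli stack $\Mpar_{G, X_\Theta, D_\Theta}$ for the constant group $G$ on the curve $X_\Theta$, where $D_\Theta$ denotes the pullback of $D$. Similarly $\tcA_{H,\Theta}$ is identified with the corresponding open subset of the enhanced Hitchin base for $(G, X_\Theta, D_\Theta)$, and under these identifications $\theta^* \tfHQl$ becomes the parabolic Hitchin complex for $(G, X_\Theta, D_\Theta)$ restricted to this open subset.

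Applying \cite[Th.~4.4.3]{GSI} and \cite[Prop.~4.4.6]{GSI} to the constant group $G$ on $X_\Theta$ then directly endows $\theta^* \tfHQl$ with a $\tilW$-equivariant structure, compatible with the $W$-action on $\tcA_{H,\Theta}$ through the quotient $\tilW \twoheadrightarrow W$. On the other hand, since $\theta : \tcA_{H,\Theta} \to \tcA_H$ is a $\Theta$-torsor, $\theta^* \tfHQl$ carries a tautological $\Theta$-equivariant structure. What remains is to verify that these two structures combine into a $\tilW \rtimes^\rho \Theta$-equivariant structure, i.e.\ that $\Theta$ acts on the chosen $\tilW$-structure through $\rho$ in the expected manner.

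The key compatibility is that $\Theta$ acts on the moduli stack $\Mpar_{G, X_\Theta, D_\Theta}$ diagonally — on $X_\Theta$ through the Galois action and on $G$ through $\rho : \Theta \to \Out(G) \subset \Aut(G)$ — and hence permutes the Hecke correspondences $\calH_{\tilw}$ used in \cite[Sec.~4]{GSI} to define the $\tilW$-action by the rule $\sigma \cdot \calH_{\tilw} = \calH_{\rho(\sigma) \tilw}$, where $\rho(\sigma)$ acts on $\tilW = \xcoch(T) \rtimes W$ through its natural action on $T$ and $W$. Because $\rho$ was chosen to take values in the pinning-preserving automorphisms, it permutes the Iwahori subgroup, the affine flag variety, and the simple affine reflections coherently, so this identity is essentially tautological. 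The main obstacle is therefore purely bookkeeping: one must trace through each ingredient of the construction in \cite[Sec.~4]{GSI} (affine flag varieties, Demazure resolutions, convolution product, and the passage from the group-theoretic action to the action on $\tfQl$) and check the naturality of each step under the twisted $\Theta$-action. Since every step depends only functorially on the pinned pair $(G, B, T, \{x_\alpha\})$ and the curve, no new geometric input beyond the split case is required.
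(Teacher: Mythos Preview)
The identification in your second paragraph is incorrect, and this breaks the argument. The base change $\Mpar_H\times_{\tcA_H}\tcA_{H,\Theta}=\Mpar_H\times_X X_\Theta$ still parametrizes $H$-bundles on $X$, or equivalently $\Theta$-equivariant $G$-bundles on $X_\Theta$; it is \emph{not} the parabolic Hitchin stack $\Mpar_{G,X_\Theta,D_\Theta}$, which parametrizes arbitrary $G$-bundles on $X_\Theta$ with no equivariance constraint. Likewise $\tcA_{H,\Theta}$ is built from $\calA_H=\cohog{0}{X_\Theta,\frc_D}^{\Theta}$, not from the full $\cohog{0}{X_\Theta,\frc_D}$, so it is a proper closed subscheme of the enhanced Hitchin base for $(G,X_\Theta)$, not an open piece. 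Even over this locus, the fibers of the two fibrations differ: the fibers of $\Mpar_H\times_X X_\Theta\to\tcA_{H,\Theta}$ are the $H$-Hitchin fibers, while the fibers of $\Mpar_{G,X_\Theta}$ restricted there are the (much larger) $G$-Hitchin fibers on $X_\Theta$. Consequently $\theta^*\tfHQl$ is not the restriction of the parabolic Hitchin complex for $(G,X_\Theta)$, and you cannot simply invoke \cite[Th.~4.4.3]{GSI} for the constant group on $X_\Theta$.

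The paper does not attempt such a reduction. It indicates that the proof is parallel to that of \cite[Th.~4.4.3]{GSI}: one constructs the Hecke correspondence $\Heckep_H$ directly for the quasi-split group scheme $H$, passes to the $\Theta\times\Theta$-cover $\Heckep_{H,\Theta}=X_\Theta\times_X\Heckep_H\times_X X_\Theta$ (see Remark~\ref{rm:qsHecke}), and shows that over the nice locus its reduced structure decomposes as a disjoint union of graphs indexed by $\tilW\rtimes^{\rho}\Theta$. The cohomological correspondences $[\calH_{H,\tilw}]$ then give the desired equivariant structure. Your final paragraph about naturality under the pinned $\Theta$-action is the right spirit, but it has to be carried out on the Hecke correspondences for $H$ itself, not by importing the action from $\Mpar_{G,X_\Theta}$.
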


The proof of this theorem is similar to that of \cite[Th. 4.4.3]{GSI}, which we omit here.

\begin{remark}
We can restate the above theorem without passing to the cover $\tcA_{H,\Theta}$ of $\tcA_H$. Let
\begin{equation*}
\unW:=X_\Theta\twtimes{\Theta,\rho}W;\hspace{1cm}\untilW:=X_\Theta\twtimes{\Theta,\rho}\tilW.
\end{equation*}
be finite group schemes over $X$. Then the group scheme $\unW$ acts on $\tcA_H$. Th. \ref{th:qsaction} can then be reformulated as follows: there is a natural $\untilW$-equivariant structure on $\tfHQl$, compatible with the action of $\untilW$ on $\tcA_H$ via the quotient $\untilW\twoheadrightarrow\unW$.
\end{remark}

\begin{remark}\label{rm:qsstable}
We can decompose the complex $\tfHQl$ into the generalized eigen-subcomplexes of $\xcoch(T)$, but with $\Theta$-ambiguity. More precisely, we have
\begin{equation*}
\tfHQl=\bigoplus_{\overline{\kappa}\in\hatT(\Ql)/\Theta}(\tfHQl)_{\overline{\kappa}}
\end{equation*}
where the direct sum is over the $\Theta$-orbits on $\hatT(\Ql)$. In particular, for $\kappa\in\hatT(\Ql)^{\Theta}$, we have a well-defined summand $(\tfHQl)_\kappa$. We also have an analogue of Prop. \ref{p:stable} for quasi-split groups: if $\kappa\in Z\hatH(\Ql)^\Theta$, then $(\tfHQl)_{\kappa}$ is isomorphic to the stable part $(\tfHQl)_{\st}$. In this case, the support of any simple constituent of $\pH^*(\tfHQl)_{\kappa}$ is an irreducible component of $\tcA_H$, by the argument of Prop. \ref{p:kappasupp}.
\end{remark}

\begin{remark}\label{rm:qsHecke}
We finally make a few remarks about the Hecke correspondence for $\Mpar_H$ when $H$ is quasi-split. As in \cite[Sec. 4.1]{GSI}, we can introduce the Hecke correspondence $\Heckep_H$ of $\Mpar_H$ over $\calA_H\times X$, however, over $(\calA_H\times X)^{\rs}$, the reduced structure of $\Hecke_H$ does not necessarily split into a disjoint union of graphs of automorphisms of $\Mparrs_H$. It is more convenient to pass to the $\Theta\times\Theta$-cover $\Heckep_{H,\Theta}:=X_\Theta\times_X\Heckep_{H}\times_XX_\Theta$, which is a correspondence
\begin{equation*}
\xymatrix{&\Heckep_{H,\Theta}\ar[dl]_{\overleftarrow{h_{H,\Theta}}}\ar[dr]^{\overrightarrow{h_{H,\Theta}}} &\\
\Mpar_{H,\Theta}\ar[dr]_{\fHit_{H,\Theta}} & & \Mpar_{H,\Theta}\ar[dl]^{\fHit_{H,\Theta}}\\
& \calA_H\times X}.
\end{equation*}
Here $\Mpar_{H,\Theta}=\Mpar_H\times_XX_\Theta$, and the morphisms
\begin{eqnarray*}
\Heckep_{H,\Theta}\xrightarrow{\overrightarrow{h_{H,\Theta}}}\Mpar_{H,\Theta}\to X_\Theta;\\ \Heckep_{H,\Theta}\xrightarrow{\overrightarrow{h_{H,\Theta}}}\Mpar_{H,\Theta}\to X_\Theta
\end{eqnarray*}
are the projections of $\Heckep_{H,\Theta}=X_\Theta\times_X\Heckep_{H}\times_XX_\Theta$ onto the first and third factors.

Analogous to \cite[Cor. 4.3.8]{GSI}, there is a right action of $\tilW\rtimes^{\rho}\Theta$ on $\Mpar_{H,\Theta}|_{(\calA_H\times X)_0}$, compatible with the $W\rtimes^{\rho}\Theta$-action on $\tcA_{H,\Theta}$, such that the reduced structure of $\Heckeprs_{H,\Theta}$ is the disjoint union of the graphs of the $\tilW\rtimes^{\rho}\Theta$-action. We denote the closures of the graph of $\tilw\in\tilW\rtimes^{\rho}\Theta$ by $\calH_{H,\tilw}$. The cohomological correspondences $[\calH_{H,\tilw}]\in\Corr(\Mpar_{H,\Theta};\Ql,\Ql)$ are used to construct the $\tilW\rtimes^{\rho}\Theta$-action on $\fpar_{H,\Theta,*}\Ql$.
\end{remark}


\subsection{Statement of the theorem}\label{ss:endo}
In Sec. \ref{ss:kappadecomp}, we have written $\tfQl$ into a direct sum of $\kappa$-parts. In Sec. \ref{ss:kappaincenter}, we studied the $\kappa$-parts whose supports are the whole of $\tcA$. In this subsection, we will study the rest $\kappa$-parts, which are supported on proper subschemes $\tcA_\kappa$ of $\tcA$. We will relate the complex $(\tfQl)_{\kappa}$ to the $\kappa$-parts of the parabolic Hitchin complexes $\tfHQl$ of the {\em endoscopic groups} $H$ of $G$, in a way which respects the affine Weyl group actions. We will see from the construction that $\kappa\in Z\hatH(\Ql)^\Theta$ (notation to be introduced later), therefore $(\tfHQl)_\kappa$ is isomorphic to the stable part of $\tfHQl$ by Rem. \ref{rm:qsstable}. In this way, we have reduced the study of the various $\kappa$-parts of $\tfQl$ to the study of the stable parts of the enhanced parabolic Hitchin complexes for $G$ and its endoscopic groups. 

\subsubsection{Endoscopic groups}

Let $\hatG$ be the connected reductive group over $\Ql$ which is Langlands dual to $G$. More precisely, $\hatG$ is a connected reductive group with a pinning $(\hatB,\hatT,\hatx_{\alpha})$, where $\hatT=\Hom(\xcoch(T),\GG_m)$, such that the based root system determined by $(\hatG,\hatB,\hatT)$ is identified with the based coroot system $\Phi^{\vee,+}\subset\Phi^\vee\subset\xcoch(T)$ of $G$. Let $\kappa\in\hatT(\Ql)$ be an element of finite order. Let $\hatG_{\kappa}$ be the centralizer of $\kappa$ in $\hatG$. Let $\hatH=\hatG^{0}_\kappa$ be the identity component of $\hatG_\kappa$, which is a connected reductive group over $\Ql$ containing the maximal torus $\hatT$. The group $\hatH$ inherits a pinning from that of $\hatG$. Let $W_H$ be the Weyl group associated to the pair $(\hatH,\hatT)$. Let $\pi_0(\kappa)$ be the component group of $\hatG_\kappa$. The conjugation action gives a natural homomorphism
\begin{equation}\label{eq:outer}
o_{\kappa}:\pi_0(\kappa)\to\Out(\hatH).
\end{equation}
where the outer automorphism group $\Out(\hatH)$ can be identified with the finite group of automorphisms of $\hatH$ stabilizing the pinning.

Recall that $W_\kappa$ is the stabilizer of $\kappa$ under the $W$-action on $\hatT$. We have an exact sequence of groups (see \cite[Lemme 10.1]{NgoFib})
\begin{equation*}
1\to W_H\to W_\kappa\to \pi_0(\kappa)\to1.
\end{equation*}
We can also identify $\pi_0(\kappa)$ with the subgroup of $W_\kappa$ which stabilizes the pinning of $\hatH$, hence we can write $W_\kappa$ as a semi-direct product $W_\kappa=W_H\rtimes\pi_0(\kappa)$.

As in \cite[Lemme 6.3.6]{NgoFL}, we choose a large enough quotient $\Theta$ of $\pi_1(X,\eta_X)$ ($\eta_X$ is the geometric generic point of $X$) so that any homomorphism $\pi_1(X,\eta_X)\to\pi_0(\kappa)$ factors through $\Theta$. Let $X_\Theta\to X$ be the associated $\Theta$-torsor over $X$.

\begin{defn}[\cite{NgoFL}, Def. 1.8.2, called a ``donn\'e endoscopique point\'ee'' there]\label{def:endo} A {\em rigidified endoscopic datum} is a pair $(\kappa,\rho)$ where $\kappa\in\hatT(\Ql)$ is a finite order element and $\rho:\Theta\to\pi_0(\kappa)$ is a homomorphism. 
\end{defn}

Given a rigidified endoscopic datum $(\kappa,\rho)$, we can form a quasi-split group scheme $H$ over $X$, called the endoscopic group associated to $(\kappa,\rho)$. Recall the construction of $H$. Take $H^{\textup{sp}}$ to be the connected reductive group over $k$ with a pinning which is Langlands dual to that of $\hatH$. The pinnings of $H^{\textup{sp}}$ and $\hatH$ give a canonical isomorphism $\Out(H^{\textup{sp}})\cong\Out(\hatH)$. We identify $\Out(H^{\textup{sp}})$ with the group of automorphisms of $H^{\textup{sp}}$ which stabilize the pinning of $H^{\textup{sp}}$. Therefore $\Theta$ acts on $H^{\textup{sp}}$ via
\begin{equation*}
o_{\kappa}\circ\rho:\Theta\xrightarrow{\rho}\pi_0(\kappa)\xrightarrow{o_{\kappa}}\Out(\hatH)=\Out(H^{\textup{sp}}).
\end{equation*}

\begin{defn}\label{def:endogp} The group scheme
\begin{equation*}
H=H_\rho:=X_{\Theta}\twtimes{\Theta,o_{\kappa}\circ\rho}H^{\textup{sp}}.
\end{equation*} 
is the {\em endoscopic group scheme} over $X$ associated to the rigidified endoscopic datum $(\kappa,\rho)$.

A quasi-split group scheme $H$ over $X$ is said to be {\em relevant to $\kappa$} if it arises as $H_{\rho}$ for some rigidified endoscopic datum $(\kappa,\rho)$.
\end{defn}

\subsubsection{Relating the Hitchin bases}
As remarked in Sec. \ref{ss:qs}, we can define the Hitchin base $\calA_H$, the universal cameral cover $\tcA_{H}$ and the parabolic Hitchin fibration
\begin{equation*}
\fpar_H:\Mpar_H\stackrel{\tilf_{H}}{\longrightarrow}\tcA_{H}\stackrel{q_{H}}{\longrightarrow}\calA_H\times X.
\end{equation*}

The relative dimensions of $\fpar_H$ and $\fpar$ are related by (see \cite[4.4.6]{NgoFL})
\begin{equation}\label{eq:diffr}
r^G_H:=\dim(\fpar)-\dim(\fpar_H)=(\#\Phi-\#\Phi_H)\deg(D)/2.
\end{equation}
where $\Phi$ and $\Phi_H$ are the sets of roots of $G$ and $H$ respectively. Note that $r^G_H$ depends only on $\kappa$ and not on $\rho$. We will simply write $r_\kappa$ for $r^G_H$.

According to \cite[4.15]{NgoFL} and \cite[7.2]{NgoFib}, there is a finite, unramified
morphism $\mu_H:\calA_H\to\calA$ whose image is contained in $\calA_{|\kappa|}$ (see Rem. \ref{rm:Akappa}).

\begin{cons}\label{cons:AHG}
We will relate $\tcA_H$ and $\tcA$ via the spaces $\tcA_{H,\Theta}:=\tcA_H\times_X X_\Theta$ and $\tcA_\Theta:=\tcA\times_XX_\Theta$. We have a Cartesian diagram
\begin{equation*}
\xymatrix{\tcA_{H,\Theta}\ar[r]\ar@/_1.5pc/[dd]_{q_{H,\Theta}}\ar[d] & X_\Theta\times_X\frt_D\ar[d] \\
\tcA_H\ar[r]\ar[d]^{q_H}  & X_\Theta\twtimes{\Theta,\rho}_X\frt_D\ar[d]\ar@{=}[r] & \frt_{H,D}\ar[d]\\
\calA_H\times X\ar[r] & X_\Theta\twtimes{\Theta,\rho}_X(\frt\sslash W_H)_D\ar@{=}[r] & \frc_{H,D}}
\end{equation*}
From this we get a commutative diagram
\begin{equation*}
\xymatrix{\tcA_{H,\Theta}\ar[rr]\ar[d] & & X_\Theta\times_X\frt_D\ar[d]\\
\calA_H\times X\ar[r]^{\mu_H\times\id} & \calA\times X\ar[r]^{\ev} & \frc_D\\}
\end{equation*}
which gives a finite morphism
\begin{equation*}
\tilmu_{H,\Theta}:\tcA_{H,\Theta}\to\tcA_\Theta=(X_\Theta\times_X\frt_D)\times_{\frc_D}(\calA_H\times X).
\end{equation*}
\end{cons} 


\begin{lemma}\label{l:AHdisj}
\begin{enumerate}
\item []
\item The image of $\tilmu_{H,\Theta}$ is contained in $\tcA_{\kappa,\Theta}:=\tcA_\kappa\times_XX_\Theta$.
\item The morphism
\begin{equation*}
\tilmu_{\kappa,\Theta}:\bigsqcup_{\rho:\Theta\to\pi_0(\kappa)}\tcA_{H_{\rho},\Theta}\xrightarrow{\bigsqcup_{\rho}\tilmu_{H_\rho,\Theta}}\tcA_{\kappa,\Theta}
\end{equation*}
is an isomorphism over $\tcArs_{\kappa,\Theta}$.
\end{enumerate}
\end{lemma}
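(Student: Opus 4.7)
The plan is to reduce both parts to statements on the regular semisimple locus $\tcArs_\kappa$, where the cameral structure is explicit, and then extend by closure arguments using that $\tilmu_{\kappa,\Theta}$ is finite.

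For part (1), I would first verify the containment generically, showing that $\tilmu_{H,\Theta}$ sends $\tcArs_{H}\times_X X_\Theta$ into $\tcArs_\kappa\times_X X_\Theta$, and then extend by taking closures (using that $\tcArs_{H}$ is dense in $\tcA_{H}$ and $\tilmu_{H,\Theta}$ is continuous). The generic containment is built into Ng\^o's framework: the endoscopic group $H=H_\rho$ was constructed precisely so that the morphism $\calP_H\to\mu_H^*\calP$ of Picard stacks (induced by the inclusion of regular centralizers) satisfies $\kappa|_{\textup{image}(\pi_0(\calP_H)\to\pi_0(\calP))}=1$. This is exactly the condition defining $\tcArs_\kappa$, so the image of $\tilmu_{H,\Theta}$ over the regular semisimple locus lies in $\tcArs_{\kappa,\Theta}$.

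For part (2), I would check that $\tilmu_{\kappa,\Theta}$ is a bijection on geometric fibers over $\tcArs_{\kappa,\Theta}$; since both source and target are finite \'etale over $(\calA_H\times X)^{\rs}\times_X X_\Theta$ on this locus, this upgrades to an isomorphism of schemes. Over a geometric point $(\bar a,\bar x,\bar x_\Theta)$ the fiber of $\tcArs\times_X X_\Theta$ is a $W$-torsor $\Xi$, and the $\kappa$-condition singles out those points whose associated local monodromy datum interacts with $\kappa$ through $W_\kappa\subset W$. Using the split short exact sequence $1\to W_H\to W_\kappa\to\pi_0(\kappa)\to1$, such a point determines canonically (a) a homomorphism $\rho:\Theta\to\pi_0(\kappa)$ measuring the outer part of the monodromy, and (b) a $W_H$-orbit carrying the remaining cameral information; these two data are exactly what a geometric point of $\tcArs_{H_\rho,\Theta}$ records, giving the required bijection.

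The main obstacle is tracking the twisted $\Theta$-equivariant structures on both sides. The endoscopic group $H_\rho$ is built by twisting the split form $H^{\textup{sp}}$ via $o_\kappa\circ\rho:\Theta\to\Out(H^{\textup{sp}})$, and one must verify that the cameral cover of $\calA_H$ produced from this twist matches the $\rho$-indexed piece isolated by the monodromy argument. The required compatibility is between the adjoint action of $\pi_0(\kappa)\subset W_\kappa$ on $\hatT$ and the outer automorphism action of $\pi_0(\kappa)$ on $H^{\textup{sp}}\supset T^{\textup{sp}}$; this is built into the pinning-based identification $\Out(H^{\textup{sp}})\cong\Out(\hatH)$ used in Def. \ref{def:endogp}, so once unwound at one geometric point the matching propagates along the regular semisimple locus and the two statements both follow.
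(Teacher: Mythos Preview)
Your approach is essentially the same as the paper's: reduce to the regular semisimple locus, verify the statements there, then take closures using finiteness of $\tilmu_{H,\Theta}$. The paper's proof is much terser because it simply cites Ng\^o's \cite[Prop.~6.3.4]{NgoFL} (that each $\tilmu_{H,\Theta}$ restricted to $\tcArs_\Theta$ is a closed embedding) and \cite[Prop.~6.3.7]{NgoFL} (that the images land in $\tcArs_{\kappa,\Theta}$ and partition it) rather than sketching their content.

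Two imprecisions in your sketch are worth cleaning up. First, the Picard stack morphism goes the other way: the inclusion of regular centralizers $\mu_{\frc}^*J\to J_H$ induces $h_{\calP}:\mu_H^*\calP\to\calP_H$, not $\calP_H\to\mu_H^*\calP$; the relevant fact is that $\kappa$ factors through $\pi_0(\calP_a)$ for $a$ in the image of $\mu_H$, which is the defining condition for $\tcArs_\kappa$. Second, in part~(2) you write that both source and target are finite \'etale over ``$(\calA_H\times X)^{\rs}\times_X X_\Theta$'', but there is no single $\calA_H$: the various $H_\rho$ have different Hitchin bases $\calA_{H_\rho}$. The correct common base is $\tcArs_\Theta$ (or its image in $\calA\times X$), over which each $\tcArs_{H_\rho,\Theta}$ and $\tcArs_{\kappa,\Theta}$ are locally closed and smooth; once you know each $\tilmu_{H_\rho,\Theta}$ is a closed embedding there (this is the content of \cite[Prop.~6.3.4]{NgoFL} and is not a formal consequence of bijectivity on points), your monodromy argument via $W_\kappa=W_H\rtimes\pi_0(\kappa)$ correctly identifies the images as a disjoint cover.
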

\begin{proof}
The argument of \cite[Prop. 6.3.4]{NgoFL} shows that for each $\rho$ and $H=H_\rho$, the restriction of $\tilmu_{H,\Theta}$ to $\tcArs_{\Theta}$ is a closed embedding. The argument of of \cite[Prop. 6.3.7]{NgoFL} shows that $\tilmu_{H,\Theta}(\tcArs_{H,\Theta})\subset\tcArs_{\kappa,\Theta}$ and that $\tcArs_{\kappa,\Theta}$ is the disjoint union of the images of $\tcArs_{H_\rho,\Theta}$ for different $\rho$. Taking closures, we get the conclusion. 
\end{proof}

We summarize the relation among the various Hitchin bases by the following commutative diagram (for fixed $\rho$ and $H=H_\rho$)
\begin{equation}\label{eq:basetheta}
\xymatrix{\tcA_{H,\Theta}\ar[r]^{\tilmu_{H,\Theta}}\ar[d]^{\theta_{H}} & \tcA_{\kappa,\Theta}\ar[d]^{\theta}\ar@{^{(}->}[r] & \tcA_\Theta\ar[d]\\
\tcA_H\ar[d]^{q_H} & \tcA_\kappa\ar[d]^{q_\kappa}\ar@{^{(}->}[r] & \tcA\ar[d]^{q}\\
\calA_H\times X\ar[r]^{\mu_H\times\id_X} & \calA_{[\kappa]}\times X\ar@{^{(}->}[r] & \calA\times X}
\end{equation}

\begin{remark}\label{rm:Wkeq}
According to Th. \ref{th:qsaction}, we have a $\tilW_H\rtimes^{\rho}\Theta$-equivariant structure on the complex $\theta_H^*\tfHQl$, which is compatible with the action of $W_H\rtimes^{\rho}\Theta$ on $\tcA_{H,\Theta}$. By Rem. \ref{rm:qsstable}, there is a well-defined generalized eigen-subcomplex $(\tfHQl)_{\kappa}$ of $\tfHQl$ since $\kappa\in\hatT(\Ql)^{\Theta}$. Since $\kappa$ is fixed by $W_H\rtimes^{\rho}\Theta$, the subcomplex $(\theta_H^*\tfHQl)_\kappa\subset\theta_H^*\tfHQl$ also has a $\tilW_H\rtimes^{\rho}\Theta$-equivariant structure.

On the other hand, since $(\tfQl)_{\kappa}$ is $\tilW_\kappa$-equivariant, the pull-back $\theta^*(\tfQl)_{\kappa}$ is $\tilW_\kappa\times\Theta$-equivariant, compatible with the $W_\kappa\times\Theta$-action on $\tcA_{\kappa,\Theta}$.

Finally, the morphism $\tilmu_{H,\Theta}$ is equivariant with respect to the embedding
\begin{eqnarray}\label{eq:iotarho}
\iota_\rho:W_H\rtimes^{\rho}\Theta&\hookrightarrow& W_\kappa\times\Theta\\
(w_H,\sigma)&\mapsto&(w_H\rho(\sigma),\sigma).
\end{eqnarray}
The embedding $\iota_\rho$ extends uniquely to an embedding
\begin{equation}\label{eq:tiliota}
\tiliota_\rho:\tilW_H\rtimes^{\rho}\Theta=\xcoch(T)\rtimes(W_H\rtimes^{\rho}\Theta)\xrightarrow{\id\rtimes\iota_\rho}\xcoch(T)\rtimes(W_\kappa\times\Theta)=\tilW_\kappa\times\Theta.
\end{equation}
\end{remark}

Now we can state the Endoscopic Decomposition Theorem.

\begin{theorem}\label{th:endo} Fix a finite order element $\kappa\in\hatT(\Ql)$. Fix global Kostant sections for the Hitchin fibrations $f^{\Hit}:\MHit\to\calA$ and $f^{\Hit}_H:\MHit_H\to\calA_H$ for all endoscopic groups $H$ relevant to $\kappa$. Then there is a natural isomorphism in $D^b_c(\tcA_{\kappa,\Theta})$:
\begin{equation}\label{eq:endo}
\theta^*(\tfQl)_\kappa\cong\bigoplus_{\rho,H=H_\rho}\tilmu_{H,\Theta,*}\theta_{H}^*(\tfHQl)_{\kappa}[-2r_\kappa](-r_\kappa).
\end{equation}
where the direct sum runs over all homomorphisms $\rho:\Theta\to\pi_0(\kappa)$. Moreover, for each $\rho$, the embedding of the direct summand indexed by $\rho$ in (\ref{eq:endo})
\begin{equation*}
\tilmu_{H,\Theta,*}\theta_{H}^*(\tfHQl)_{\kappa}[-2r_\kappa](-r_\kappa)\hookrightarrow\theta^*(\tfQl)_\kappa
\end{equation*}
is $(\tilW_H\rtimes^{\rho}\Theta,\tilW_\kappa\times\Theta)$-equivariant under the embedding $\tiliota_\rho:\tilW_H\rtimes^{\rho}\Theta\hookrightarrow\tilW_\kappa\times\Theta$ in (\ref{eq:tiliota}).
\end{theorem}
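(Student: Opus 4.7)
The plan is to follow Ng\^o's strategy for the geometric stabilization of the trace formula, upgraded to an isomorphism of complexes compatible with the full $\tilW$-action, by first constructing a natural morphism from the right-hand side of (\ref{eq:endo}) to the left-hand side via an endoscopic cohomological correspondence, and then using the Support Theorem to reduce the isomorphism check to the regular semisimple locus $\tcArs_{\kappa,\Theta}$, where $\tilmu_{\kappa,\Theta}$ is itself an isomorphism by Lemma~\ref{l:AHdisj}(2).

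For the construction of the morphism I would use the fixed Kostant sections for $\fHit$ and $\fHit_H$ to rigidify Ng\^o's endoscopic correspondence (App.~\ref{s:endocorr}) between $\Mpar_H\times_X X_\Theta$ and $\Mpar\times_X X_\Theta$ over $\tilmu_{H,\Theta}$. Pushing forward along this cohomological correspondence produces a natural map
\[
\tilmu_{H,\Theta,*}\theta_H^*\tfHQl[-2r_\kappa](-r_\kappa)\to\theta^*\tfQl,
\]
the shift $[-2r_\kappa](-r_\kappa)$ accounting for the relative dimension discrepancy (\ref{eq:diffr}). Composing with projection to the $\kappa$-isotypic parts on both sides yields the candidate map indexed by each $\rho$. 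The equivariance under $\tiliota_\rho$ decomposes into three checks: the $\Theta$-part is tautological (Th.~\ref{th:qsaction}); the $W_H$-part follows from compatibility of the endoscopic correspondence with the Hecke correspondences $\calH_{H,\tilw}$ and $\calH_{\tilw}$ of Rem.~\ref{rm:qsHecke}; and the lattice $\xcoch(T)$-part follows from the fact that, via the Kostant sections, the endoscopic correspondence intertwines the Picard-stack actions of $\tcP_H$ and $\tcP$ under $\tilmu_{H,\Theta}$.

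To show the candidate map is an isomorphism I would invoke Prop.~\ref{p:kappasupp}, the Support Theorem (Th.~\ref{th:supp}), and their quasi-split analogues in Rem.~\ref{rm:qsstable}. By Prop.~\ref{p:kappasupp}, every simple constituent of $\pH^*(\tfQl)_\kappa$ has support an irreducible component of $\tcA_\kappa$; pulled back along $\theta$ these become irreducible components of $\tcA_{\kappa,\Theta}$, which by Lemma~\ref{l:AHdisj}(2) are exactly the images of the $\tcA_{H_\rho,\Theta}$ under $\tilmu_{H_\rho,\Theta}$ for varying $\rho$. The Support Theorem then identifies both sides of (\ref{eq:endo}) as middle extensions from their restrictions to $\tcArs_{\kappa,\Theta}$, so it suffices to verify the decomposition on the regular semisimple locus. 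There the parabolic Hitchin fibers become, up to a product with a fiber of $q_\kappa$, compactified Jacobians of nodal spectral curves, and the required identification reduces to the comparison of Prym varieties for $G$ and its endoscopic groups --- the geometric input underlying Ng\^o's $\kappa$-orbital integral identity.

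The main obstacle is making the compatibility with the lattice part $\xcoch(T)\subset\tilW$ work at the level of complexes. On perverse cohomology the $\xcoch(T)$-action is semisimple (Lem.~\ref{l:latticefinite}), but on stalks of $\tfQl$ it is genuinely non-semisimple, as the example in Sec.~\ref{sss:ex} shows. Matching these non-semisimple actions on both sides of (\ref{eq:endo}) requires identifying the full Picard-stack actions of $\tcP_H$ and $\tcP$ under the endoscopic correspondence, not merely the induced actions of the component groups $\pi_0(\tcP_H/\tcA_H)$ and $\pi_0(\tcP/\tcA)$ that Ng\^o's original perverse-cohomology-level argument uses. Establishing this finer compatibility is the main role of the appendix formalism, and once it is in place the rest of the argument proceeds as sketched.
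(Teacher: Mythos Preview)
Your proposal follows the same overall strategy as the paper: construct a map via the parabolic endoscopic correspondence, establish $\tilW_H\rtimes^\rho\Theta$-equivariance, and use Prop.~\ref{p:kappasupp} together with the Support Theorem to reduce the isomorphism check. Two points deserve correction.

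First, the paper constructs the map in the direction opposite to yours: the fundamental class $[\tcC_{H,\Theta}]$ yields $[\tcC_{H,\Theta}]_\#:\theta^*\tfQl\to\tilmu_{H,\Theta,*}\theta_H^*\tfHQl[-2r_\kappa](-r_\kappa)$, a projection from the $G$-side onto the $H$-side, and one then shows the direct sum of the $\kappa$-parts over all $\rho$ is an isomorphism. More importantly, your treatment of equivariance is too breezy. The assertion that it ``decomposes into three checks'' with the lattice part following from Picard-stack compatibility is correct only over the regular locus; but $\tcC_{H,\Theta}$ is a \emph{closure}, and to show that $[\tcC_{H,\Theta}]_\#$ commutes with $[\calH_{\tilw}]_\#$ one must verify that the compositions $\tcC_{H,\Theta}*\calH_{\tiliota_\rho(\tilw)}$ and $\calH_{H,\tilw}*\tcC_{H,\Theta}$ satisfy condition (G-2) of \cite[Def.~A.5.1]{GSI}. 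This is the content of Lemma~\ref{l:endoG2}, whose proof requires non-trivial dimension estimates over the boundary, carried out via the modular endoscopic correspondence $\Cmod_H$ and the codimension estimate on $\calA_{H,\delta}$. This is the genuine technical core you are missing.

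Second, you misidentify the main obstacle. The non-semisimplicity of the $\xcoch(T)$-action is not the difficulty: since $[\tcC_{H,\Theta}]_\#$ is built directly as a map of complexes via the correspondence formalism, it automatically respects all filtration data once equivariance is established. The isomorphism check then legitimately reduces to perverse cohomology, and the paper reduces further to the geometric generic points of $\tcA_{\kappa,\Theta}$ (not merely $\tcArs_{\kappa,\Theta}$). The final verification there (Lemma~\ref{l:nodal}) is not a comparison of Prym varieties but an explicit diagram chase using the factorization of $\calC_{H,a_H}$ into a chain of $\PP^1$-fibrations (Lemma~\ref{l:simpleC}), where the key point is that the coroot $\beta_i^\vee$ governing the gluing satisfies $\kappa(\beta_i^\vee)\neq 1$.
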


The proof will be given in Sec. \ref{ss:pfendo}.


\subsection{Proof of Theorem \ref{th:endo}--Relating the parabolic Hitchin complexes}\label{ss:pfendo}
In this subsection, we give the proof of Th. \ref{th:endo}. The strategy of the proof is the following: we first use the parabolic version of the endoscopic correspondence to construct a map as in (\ref{eq:endo}), and then we check it is indeed an isomorphism, which, thanks to the Support Theorem \ref{th:supp} and Prop. \ref{p:kappasupp}, reduces to a calculation over the generic points of $\tcA_{\kappa,\Theta}$.

We first fix a rigidified endoscopic datum $(\kappa,\rho)$, hence the endoscopic group scheme $H$. Let $d_H=\dim(\Mpar_{H,\Theta})$. Let $\tcC_{H,\Theta}$ be the parabolic endoscopic correspondence between $\Mpar_{H,\Theta}$ and $\Mpar_{\Theta}$ over $\tcA_{\Theta}$. Since $\tcC_{H,\Theta}$ is the closure of the graph $\Gamma(\tilh_{\Theta})$, we have $\dim(\tcC_{H,\Theta}/\tcA_{H,\Theta})=\dim(\tilf_\Theta)=r_\kappa+\dim(\tilf_{H,\Theta})$ by the equality (\ref{eq:diffr}). Therefore $\dim(\tcC_{H,\Theta})=d_H+r_\kappa$. Since we work with a fixed $\kappa$ throughout this subsection, we simply write $r$ for $r_\kappa$.

The fundamental class of $\tcC_{H,\Theta}$ gives an element
\begin{eqnarray*}
[\tcC_{H,\Theta}]\in \hBM{2(d_H+r)}{\tcC_{H,\Theta}}&\cong&\cohog{-2(d_H+r)}{\tcC_{H,\Theta},\overleftarrow{c_{H,\Theta}}^!\Ql[2d_H](d_H)}\\
&=&\Corr(\tcC_{H,\Theta};\Ql[-2r](-r),\Ql).
\end{eqnarray*}
Here the first isomorphism is given by the choice of a fundamental class of $\Mpar_{H,\Theta}$, which is smooth.

The cohomological correspondence $[\tcC_{H,\Theta}]$ gives a map
\begin{equation*}
[\tcC_{H,\Theta}]_\#:\tilf_{\Theta,*}\Ql\to(\tilmu_{H,\Theta}\circ\tilf_{H,\Theta})_*\Ql[-2r](-r).
\end{equation*}
Using proper base change, we can also write $[\tcC_{H,\Theta}]_\#$ as
\begin{equation}\label{eq:tcC}
[\tcC_{H,\Theta}]_\#:\theta^*\tfQl\to\tilmu_{H,\Theta,*}\theta_H^*\tfHQl[-2r](-r).
\end{equation}
By Rem. \ref{rm:Wkeq}, the LHS of (\ref{eq:tcC}) admits a $\tilW_\kappa\times\Theta$-equivariant structure, and the RHS of (\ref{eq:tcC}) admits a $\tilW_H\rtimes^{\rho}\Theta$-equivariant structure.


\begin{prop}\label{p:Ceq}
The map $[\tcC_{H,\Theta}]_\#$ is $\tilW_H\rtimes^{\rho}\Theta$-equivariant under the embedding $\tiliota_\rho:\tilW_H\rtimes^{\rho}\Theta\hookrightarrow\tilW_\kappa\times\Theta$ defined in (\ref{eq:tiliota}).
\end{prop}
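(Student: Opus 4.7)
The plan is to verify equivariance of $[\tcC_{H,\Theta}]_\#$ on a set of generators of $\tilW_H\rtimes^\rho\Theta=\xcoch(T)\rtimes(W_H\rtimes^\rho\Theta)$, namely the elements of $\Theta$, the simple reflections in $W_H$, and the lattice $\xcoch(T)$. For each such generator $\tilw$, the commutativity I need is an equality of two a priori different morphisms $\theta^*\tfQl\to\tilmu_{H,\Theta,*}\theta_H^*\tfHQl[-2r](-r)$, each realized by composing $[\tcC_{H,\Theta}]_\#$ with a suitable Hecke correspondence. By the standard formalism of cohomological correspondences, this will follow from an equality of composed correspondences on the underlying stacks.

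Combining the Support Theorem (Th.~\ref{th:supp}) for $\tfQl$ with the analogue of Prop.~\ref{p:kappasupp} in the quasi-split setting (Rem.~\ref{rm:qsstable}), the perverse cohomology sheaves of both $\theta^*(\tfQl)_\kappa$ and $\tilmu_{H,\Theta,*}\theta_H^*(\tfHQl)_\kappa$ are middle extensions from their restrictions to $\tcArs_{\kappa,\Theta}$. Hence any morphism between them is determined by its restriction to $\tcArs_{\kappa,\Theta}$, and it suffices to check each of the needed commutativities there. Over this open locus, Lem.~\ref{l:AHdisj} identifies $\tilmu_{H,\Theta}$ with the inclusion of a union of components of $\tcArs_{\kappa,\Theta}$, while \cite[Cor.~4.3.8]{GSI} and Rem.~\ref{rm:qsHecke} identify the relevant reduced Hecke correspondences with disjoint unions of graphs of automorphisms of $\Mparrs$ (base-changed to $X_\Theta$). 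The equivariance is thereby reduced to the purely geometric assertion that the generic endoscopic correspondence $\tcC_{H,\Theta}|_{\tcArs_{\kappa,\Theta}}$ intertwines the two $\tilW_H\rtimes^\rho\Theta$-actions (on the $H$-side directly, and on the $G$-side through $\tiliota_\rho$). The $\Theta$-case is then tautological: $\tcC_{H,\Theta}$ is obtained from Ng\^o's endoscopic correspondence over $\tcA$ by base change along $X_\Theta\to X$, so $\Theta$ acts on both sides by deck transformations and the correspondence is $\Theta$-equivariant by construction. The $W_H$-case reduces, via the explicit description of $\tcC_{H,\Theta}|_{\tcArs_{\kappa,\Theta}}$ recorded in App.~\ref{s:endocorr}, to the fact that $W_H$ acts as an intrinsic symmetry of the $H$-side Cartan datum controlling the generic fiber.

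The main obstacle is the lattice case. Unlike the $W_H$- and $\Theta$-cases, where the action comes from an honest group acting on the stacks (at least generically), the $\xcoch(T)$-action is defined only indirectly through the Picard action together with the section $s:\xcoch(T)\times\tcArs\to\tcP$ of~(\ref{eq:latticePsection}). Checking its compatibility with $[\tcC_{H,\Theta}]_\#$ requires producing, over the relevant components of $\tcArs_{\kappa,\Theta}$, a natural homomorphism of Picard stacks $\tcP_H\to\tcP$ which, first, realises the Picard-equivariance of the endoscopic correspondence, and second, intertwines the lattice sections $s_H$ and $s$ (accounting for the twist by $\rho$). Both properties are part of the structure of endoscopic correspondences worked out in Ng\^o's unpublished results collected in App.~\ref{s:endocorr}, and it is on these two compatibilities that the lattice equivariance — and hence the proposition — ultimately rests.
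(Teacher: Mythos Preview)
Your approach has a genuine gap in the crucial reduction step. You write that since the perverse cohomology sheaves of both $\theta^*\tfQl$ and $\tilmu_{H,\Theta,*}\theta_H^*\tfHQl[-2r](-r)$ are middle extensions from $\tcArs_{\kappa,\Theta}$, ``any morphism between them is determined by its restriction to $\tcArs_{\kappa,\Theta}$''. This inference is not valid: the objects in question are complexes, not single perverse sheaves. After (non-canonically) decomposing both sides as direct sums of shifted simple perverse sheaves, a morphism between them is a matrix of classes in $\Ext^n(P,Q)$ for various $n\geq 0$. For $n=0$ the restriction map is an isomorphism, and for $n=1$ it is injective (this is precisely Lem.~\ref{l:control}), but for $n\geq 2$ the restriction $\Ext^n(j_{!*}\calF,j_{!*}\calG)\to\Ext^n(\calF,\calG)$ need not be injective: the kernel is fed by $\Hom(i^*j_{!*}\calF,i^!j_{!*}\calG[n])$, and since $i^*j_{!*}\calF\in\leftexp{p}{D}^{\leq -1}$ while $i^!j_{!*}\calG[n]\in\leftexp{p}{D}^{\geq 1-n}$, this group can be nonzero once $n\geq 2$. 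So the two composites you want to compare could a priori differ by such ``deep'' extension classes supported on the boundary, and the Support Theorem alone does not rule this out.

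The paper's proof avoids this issue entirely by staying at the level of cohomological correspondences rather than passing to perverse cohomology. For each $\tilw\in\tilW_H\rtimes^\rho\Theta$ (treated uniformly, not by generators), the two composites $[\tcC_{H,\Theta}]_\#\circ[\calH_{\tiliota_\rho(\tilw)}]_\#$ and $[\calH_{H,\tilw}]_\#\circ[\tcC_{H,\Theta}]_\#$ are shown to equal $[\tcC_{H,\Theta}*\calH_{\tiliota_\rho(\tilw)}]_\#$ and $[\calH_{H,\tilw}*\tcC_{H,\Theta}]_\#$ respectively, using the formalism of \cite[Prop.~A.5.5]{GSI}. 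This step requires the dimension condition (G-2) of \cite[Def.~A.5.1]{GSI} for the composed correspondences with respect to $U=(\calA\times X)_0$, and establishing it (Lem.~\ref{l:endoG2}) is the real technical content: one bounds the dimension of the boundary pieces by passing to the modular endoscopic correspondence $\tcCmod_{H,\Theta}$ and using affine Springer fiber dimension estimates. Once (G-2) holds, \cite[Lem.~A.5.2]{GSI} reduces the desired equality to an equality of correspondences over $U$, which is then checked directly by identifying both composites as closures of the graph of the same explicit morphism $\tcA_{H,\Theta}|_U\times_{\calA}\calP\to\tcA_{H,\Theta}|_U\times_{\calA_H}\calP_H$. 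Your geometric intuition for that last identification (comparing the lattice sections $s_\lambda$ and $s_{H,\lambda}$ via the Picard homomorphism $h_\calP$) is on target, but without the (G-2) dimension bound you cannot propagate it from $U$ to a statement about the maps of complexes on all of $\tcA_\Theta$.
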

\begin{proof}
Recall from Rem. \ref{rm:qsHecke} that for $\tilw\in\tilW_H\rtimes^{\rho}\Theta$, we have the reduced Hecke correspondence $\calH_{H,\tilw}$ of $\Mpar_{H,\Theta}$ over $\calA_H\times X$, whose fundamental class gives the $\tilw$-action on $\tilf_{H,\Theta,*}\Ql$. Similarly, we have the reduced Hecke correspondence $\calH_{\tiliota_\rho(\tilw)}$ of $\Mpar_\Theta$ over $\calA\times X$, whose fundamental class gives the $\tiliota_\rho(\tilw)$-action on $\tilf_{\Theta,*}\Ql$. We also view $\tcC_{H,\Theta}$ as a correspondence between $\Mpar_{H,\Theta}$ and $\Mpar_\Theta$ over $\calA\times X$. To prove the proposition, we have to show that
\begin{equation}\label{eq:compequal}
[\tcC_{H,\Theta}]_\#\circ[\calH_{\tiliota_\rho(\tilw)}]_\#=[\calH_{H,\tilw}]_\#\circ[\tcC_{H,\Theta}]_\#:\fpar_{\Theta,*}\Ql\to(\mu_{H}\times\id_X)_*\fpar_{H,\Theta,*}\Ql[-2r](-r).
\end{equation}
We will show in Lem. \ref{l:endoG2} that both compositions $\tcC_{H,\Theta}*\calH_{\tiliota_\rho(\tilw)}$ and $\calH_{H,\tilw}*\tcC_{H,\Theta}$ satisfy the condition (G-2) in \cite[Def. A.5.1]{GSI} with respect to the open subset $U=(\calA\times X)_0\subset(\calA\times X)$. Since $\calH_{H,\tilw}$ and $\calH_{\tiliota_\rho(\tilw)}$ are graph-like, $\tcC_{H,\Theta}$ is right graph-like, we can apply \cite[Prop. A.5.5]{GSI} to conclude
\begin{eqnarray*}
& [\tcC_{H,\Theta}]_{\#}{\circ}[\calH_{\tiliota_\rho(\tilw)}]_{\#}= [\tcC_{H,\Theta}*\calH_{\tiliota_\rho(\tilw)}]_{\#};\\
& [\calH_{H,\tilw}]_{\#}{\circ}[\tcC_{H,\Theta}]_{\#}=[\calH_{H,\tilw}*\tcC_{H,\Theta}]_{\#}.
\end{eqnarray*}
Therefore, in order to prove (\ref{eq:compequal}), it suffices to show
\begin{equation*}
[\tcC_{H,\Theta}*\calH_{\tiliota_\rho(\tilw)}]_\#=[\calH_{H,\tilw}*\tcC_{H,\Theta}]_\#.
\end{equation*}
Again, by Lem. \ref{l:endoG2} and \cite[Lem. A.5.2]{GSI}, it suffices to establish an isomorphism of correspondences between $\Mpar_{H,\Theta}|_U$ and $\Mpar_\Theta|_U$ over $U$:
\begin{equation}\label{eq:correq}
(\tcC_{H,\Theta}*\calH_{\tiliota_\rho(\tilw)})|_U=(\calH_{H,\tilw}*\tcC_{H,\Theta})|_U.
\end{equation}

Over the locus $U=(\calA\times X)_0$, $\calH_{\tiliota_\rho(\tilw)}|_U$ is the graph of the right  $\tiliota_\rho(\tilw)$-action on $\Mpar_\Theta|_U$. Therefore, $(\tcC_{H,\Theta}*\calH_{\tiliota_\rho(\tilw)})|_U$ is the closure of the graph of
\begin{equation}\label{eq:maphw}
\tilh_{\Theta}\circ\tiliota_{\rho}(\tilw^{-1}):\Mparreg_\Theta|_U\to\Mparreg_\Theta|_U\to\Mparreg_{H,\Theta}|_U.
\end{equation}

On the other hand, since $\delta_{H}(a_H,x)\leq\delta(\mu_H(a_H),x)$, the preimage of $U$ under $\mu_H\times\id_X$ is contained in $(\calA_H\times X)_0$. Therefore $\calH_{H,\tilw}|_U$ is the graph of the right $\tilw$-action on $\Mpar_{H,\Theta}|_U$. Hence $(\calH_{H,\tilw}*\tcC_{H,\Theta})|_U$ is the closure of the graph of
\begin{equation}\label{eq:mapwh}
\tilw^{-1}\circ\tilh_{\Theta}:\Mparreg_\Theta|_{U}\to\Mparreg_{H,\Theta}|_U\to\Mparreg_{H,\Theta}|_U.
\end{equation}

We claim that the two morphisms in (\ref{eq:maphw}) and (\ref{eq:mapwh}) are the same. In fact, for $\tilw^{-1}=(\lambda,u)\in\xcoch(T)\rtimes(W_H\rtimes^{\rho}\Theta)$, (where $\lambda\in\xcoch(T)$ and $u\in W_H\rtimes^{\rho}\Theta$), the morphism $\tilh_{\Theta}\circ\tiliota_\rho(\tilw^{-1})$ is induced by (see \cite[Cor. 4.3.8]{GSI})
\begin{eqnarray}\label{eq:actionh}
\tcA_{H,\Theta}|_U\times_{\calA}\calP\to\tcA_{H,\Theta}|_U\times_{\calA}\calP\to\tcA_{H,\Theta}|_U\times_{\calA_H}\calP_H\\
\notag
(\tila_H,y) \mapsto (u^{-1}\tila_H,y+s_\lambda(\tila))\mapsto(u^{-1}\tila_H,h_{\calP}(a_H,y)+h_{\calP}(a_H,s_\lambda(\tila)))
\end{eqnarray}
where $\tila_H\in\tcA_{H,\Theta}|_U$ has images $\tila\in\tcA|_U$ and $a_H\in\calA_H$, and $s_\lambda:\tcArs\to\calP$ is the morphism defined in \cite[Rem. 4.3.7]{GSI}. 

Similarly, the morphism $\tilw^{-1}\circ\tilh_{\Theta}$ is induced by
\begin{eqnarray}\label{eq:haction}
\tcA_{H,\Theta}|_U\times_{\calA}\calP\to\tcA_{H,\Theta}|_U\times_{\calA_H}\calP_H\to\tcA_{H,\Theta}|_U\times_{\calA_H}\calP_H\\
\notag
(\tila_H,y)\mapsto(\tila_H,h_{\calP}(a_H,y))\mapsto(u^{-1}\tila_H,h_{\calP}(a_H,y)+s_{H,\lambda}(\tila_H))
\end{eqnarray}
where $s_{H,\lambda}:\tcA_{H,\Theta}|_U\to\calP_H$ is the morphism defined similarly as $s_\lambda$.

By the construction of the morphisms $h_{\calP},s_\lambda$ and $s_{H,\lambda}$, we have a commutative diagram
\begin{equation*}
\xymatrix{\tcA_{H,\Theta}|_U\ar[dr]_{s_{H,\lambda}}\ar[r] & \calA_H\times_{\calA}\tcA|_U\ar[r]^{\id\times s_\lambda} & \calA_H\times_{\calA}\calP\ar[dl]^{h_{\calP}}\\
& \calP_H}
\end{equation*}
In other words,
\begin{equation*}
h_{\calP}(a_H,s_\lambda(\tila))=s_{H,\lambda}(\tila_H)
\end{equation*}
Therefore, the two morphisms in (\ref{eq:actionh}) and (\ref{eq:haction}) are the same. This implies that the morphisms in (\ref{eq:maphw}) and (\ref{eq:mapwh}) are the same. Since both correspondences in (\ref{eq:correq}) are closures of the graph of the same morphism, (\ref{eq:correq}) is proved. This completes the proof of the proposition.
\end{proof}


\begin{lemma}\label{l:endoG2}
Both correspondences $\tcC_{H,\Theta}*\calH_{\tiliota_\rho(\tilw)}$ and $\calH_{H,\tilw}*\tcC_{H,\Theta}$ satisfy the condition (G-2) in \cite[Def. A.5.1]{GSI} with respect to the open subset $U=(\calA\times X)_0\subset\calA\times X$.
\end{lemma}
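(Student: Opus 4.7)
The plan is to unwind the condition (G-2) from \cite[Def. A.5.1]{GSI} on the open set $U=(\calA\times X)_0$ and verify it by explicit description of the three correspondences involved: the Hecke correspondences $\calH_{H,\tilw}$ and $\calH_{\tiliota_\rho(\tilw)}$ and the parabolic endoscopic correspondence $\tcC_{H,\Theta}$, together with their convolutions. Recall that (G-2) requires, roughly, that each correspondence become graph-like (closed embedding into a product after restriction to $U$) and that the convolution have the expected dimension over $U$, so that no excess intersection occurs and the open part of the convolution is the graph of the composed morphism.

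First I would record the three key local descriptions. Over $U$, the reduced Hecke correspondence $\calH_{\tiliota_\rho(\tilw)}|_U$ is the graph of the right $\tiliota_\rho(\tilw)$-action on $\Mparreg_\Theta|_U$ (by \cite[Cor.~4.3.8]{GSI}). Since $\delta_H(a_H,x)\leq\delta(\mu_H(a_H),x)$, the preimage of $U$ under $\mu_H\times\id_X$ lies in $(\calA_H\times X)_0$, so $\calH_{H,\tilw}$ restricted to this preimage is the graph of the right $\tilw$-action on $\Mparreg_{H,\Theta}$ (by Rem.~\ref{rm:qsHecke}). Finally, by the construction recalled in Appendix~\ref{s:endocorr}, $\tcC_{H,\Theta}|_U$ is the closure of the graph of $\tilh_\Theta:\Mparreg_\Theta|_U\to\Mparreg_{H,\Theta}|_U$, hence right graph-like over $U$.

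Having fixed these descriptions, the check of (G-2) splits into a graph-like verification and a dimension count. Both input correspondences are graphs of actual morphisms on $U$, so their composition with $\tcC_{H,\Theta}$ is set-theoretically the graph of the composed morphism in (\ref{eq:maphw}) respectively (\ref{eq:mapwh}) over the open locus. The expected relative dimension of each composition is $\dim(\tilf_\Theta)=r+\dim(\tilf_{H,\Theta})$, which matches $\dim(\tcC_{H,\Theta}/\tcA_{H,\Theta})$ computed in Sec.~\ref{ss:pfendo}; thus the fiber product defining $\tcC_{H,\Theta}\ast\calH_{\tiliota_\rho(\tilw)}$ and $\calH_{H,\tilw}\ast\tcC_{H,\Theta}$ has the correct dimension over $U$, since convolving with a graph is just base change along an isomorphism of $\Mparreg_\Theta|_U$ (resp.\ $\Mparreg_{H,\Theta}|_U$) and does not change the dimension of $\tcC_{H,\Theta}$.

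The main obstacle is checking that no parasitic components appear in the convolution: a priori the fiber product could acquire extra components of excessive dimension along the boundary of $\Mparreg$ inside $\Mpar|_U$. To rule this out, I would use that on $U$ the actions of $\tilw$ and $\tiliota_\rho(\tilw)$ are isomorphisms of $\Mparreg_\Theta|_U$ and $\Mparreg_{H,\Theta}|_U$, so convolution with $\calH_{\tiliota_\rho(\tilw)}$ or $\calH_{H,\tilw}$ is obtained by pulling back $\tcC_{H,\Theta}|_U$ along an automorphism of one of its legs; this operation is dimension-preserving and sends closures of graphs to closures of graphs. Combined with the right graph-likeness of $\tcC_{H,\Theta}$, this gives both the equidimensionality required by (G-2) and the identification of the open stratum of the convolution with the graph of the composed morphism, completing the verification.
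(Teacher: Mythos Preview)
Your proposal addresses only half of condition (G-2). As the paper's proof makes explicit, (G-2) in \cite[Def.~A.5.1]{GSI} consists of two parts: a dimension bound $\dim(\cdot|_U)\leq d$ over the open locus $U$, \emph{and} a strict bound $\dim(\textup{image over }\partial U)<d$ over the complement $\partial U=\calA\times X-U$. Your write-up handles only the first part (correctly: convolving with a graph over $U$ is pullback along an automorphism and preserves dimension), but your description of (G-2) as ``each correspondence becomes graph-like over $U$ with expected dimension'' omits the $\partial U$ condition entirely. Your ``main obstacle'' paragraph worries about the boundary of $\Mparreg$ inside $\Mpar|_U$, which is not the issue; the real work is over $\partial U$ in the base, where the Hecke correspondences are no longer graphs of automorphisms and $\tcC_{H,\Theta}$ has no simple description.

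This second part is the substance of the paper's proof and cannot be bypassed by your argument. The paper replaces $\tcC_{H,\Theta}$ by the modular endoscopic correspondence $\tcCmod_{H,\Theta}$ (whose image contains $\tcC_{H,\Theta}$, so the dimension bound for the former implies it for the latter), then bounds the fibers of the convolution over a point $\tilxi\in\Mpar_\Theta|_{\partial U}$ by recognizing them inside products of local Hecke modifications, i.e.\ affine Springer fibers, whose dimensions are controlled by $\delta_H(a_H)$. Combining this with the codimension estimate $\codim_{\calA_H}(\calA_{H,\delta})\geq\delta$ yields the required strict inequality. None of these ingredients --- the passage to $\tcCmod_{H,\Theta}$, the affine Springer fiber bound, or the stratification by $\delta_H$ --- appears in your outline, and without them the $\partial U$ condition is unverified.
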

\begin{proof}
We let $\calH$ be any finite-type closed substack of $\Heckep_\Theta$, and $\calH_H$ be any finite-type closed substack of $\Heckep_{H,\Theta}$. We will show that $\tcC_{H,\Theta}*\calH$ and $\calH_H*\tcC_{H,\Theta}$ both satisfy the condition (G-2) with respect to $U$. Let $d=\dim\tcC_{H,\Theta}=\dim(\tcA_{H,\Theta}\times_{\tcA_{\Theta}}\Mpar_{\Theta})$.

Since the reduced structures of $\calH|_U$ and $\calH_H|_U$ are substacks of unions of graphs of automorphisms of $\Mpar_\Theta|_U$ and $\Mpar_{H,\Theta}|_U$, therefore
\begin{eqnarray*}
\dim(\tcC_{H,\Theta}*\calH|_U)\leq\dim(\tcC_{H,\Theta}|_U)=d;\\
\dim(\calH_H*\tcC_{H,\Theta}|_U)\leq\dim(\tcC_{H,\Theta}|_U)=d.
\end{eqnarray*}
This verifies the first condition of (G-2). Let $\partial U=\calA\times X-U$. It remains to verify that the images of $\tcC_{H,\Theta}*\calH|_{\partial U}$ and $\calH_H*\tcC_{H,\Theta}|_{\partial U}$ in $\Mpar_{H,\Theta}\times_{\tcA_{\Theta}}\Mpar_\Theta$ have dimensions less than $d$. We call these conditions (G-2)'.

Let $\tcCmod_{H,\Theta}=\tcA_{H,\Theta}\times_{\calA_H}\Cmod_H$ be the base change of the modular endoscopic correspondence. By Constructions \ref{cons:modcorr}, the graph $\Gamma(h_{\calM})$ naturally lifts to an open substack of $\Cmod_H$. By Lem. \ref{l:Cmodproper} and the fact that $\MHit_H\times_{\calA_H}(\calA_H\times_{\calA}\MHit)\to\calA_H$ is separated, the morphism $\overleftrightarrow{m}:\Cmod_H\to\MHit_H\times_{\calA_H}(\calA_H\times_{\calA}\MHit)$ also satisfies the existence part of the valuative criterion, up to a finite separable extension (as in Lem. \ref{l:Cmodproper}). Therefore the image of $\Cmod_H$ in $\MHit_H\times_{\calA_H}(\calA_H\times_{\calA}\MHit)$ is closed under specializations (cf. \cite[Th. 7.10]{LM}), hence closed, hence contains $\calC$, the closure of $\Gamma(h_{\calM})$. Similarly, the image of $\tcCmod_{H,\Theta}$ in $\Mpar_{H,\Theta}\times_{\tcA_{\Theta}}\Mpar_\Theta$ contains the graph $\Gamma(\tilh_\Theta)$, hence its closure $\tcC_{H,\Theta}$. Therefore, to check the condition (G-2)' for $\tcC_{H,\Theta}*\calH$ and $\calH_H*\tcC_{H,\Theta}$, it suffices to check the same condition (G-2)' for $\tcCmod_{H,\Theta}*\calH$ and $\calH_H*\tcCmod_{H,\Theta}$. (We did not directly work with $\tcCmod_{H,\Theta}$ because we did not know enough about its geometry; but as far as the dimension estimates are concerned, it is safe and convenient to work directly with $\tcCmod_{H,\Theta}$.)

We first consider the composition of correspondences
\begin{equation*}
\xymatrix{& & \tcCmod_{H,\Theta}*\calH\ar[dl]_{\overleftarrow{e}}\ar[dr]^{\overrightarrow{e}}\\
& \tcCmod_{H,\Theta}\ar[dl]_{\overleftarrow{m}}\ar[dr]^{\overrightarrow{m}} & & \calH\ar[dr]^{\overrightarrow{h}}\ar[dl]_{\overleftarrow{h}}\\
\Mpar_{H,\Theta} & & \Mpar_{\Theta} & & \Mpar_{\Theta}}
\end{equation*}

Fix a point $\tila_H\in\tcA_{H,\Theta}$ with image $a_H\in\calA_H,\tila\in\tcA_{\Theta},a\in\calA$ and $x\in X$. Assume that $\tila\in\partial U$, i.e., $\delta(a,x)\geq1$. Let $\xi\in\Mpar_{a,x}$ and $\tilxi=(\xi,\tila)\in\Mpar_\Theta$. We want to estimate the dimension of the image of the fiber $(\overrightarrow{h}\overrightarrow{e})^{-1}(\tilxi)$ in $\Mpar_{H,\Theta}$. Recall that a point in this fiber determines a triple $(\eta,\xi',\tau)$ where the $\eta\in\Mpar_{H,a_H,x},\xi'\in\Mpar_{a,x}$ and $\tau:\xi|_{X-\{x\}}\isom\xi'|_{X-\{x\}}$.

Let $\Delta_a$ be the pull-back of the discriminant locus $\Delta\subset\frc$ to $X$ via $a$. Since $\delta(a,x)\geq1$, we have $x\in \Delta_a$. Recall the morphism $h$ in (\ref{eq:hstack}) is an isomorphism away from the discriminant locus of $\frc$. Therefore, $(a_H,\xi')$ determines a morphism
\begin{equation*}
h(a_H,\xi'):X-\Delta_a\xrightarrow{(a_H,\xi')}\frc_H\times_{\frc}[\frg^{\rs}/G]_D\xrightarrow{h}[\frh^{\rs}/H]_D.
\end{equation*}
By Construction (\ref{cons:modcorr}), for a pair of point $(\eta,\xi')\in\Mpar_{H,a_H,x}\times\Mpar_{a,x}$ in the image of $\tcCmod_{H,\Theta}$, the morphism $\eta:X-\Delta_a\to[\frh^{\rs}/H]_D$ is canonically isomorphic to $h(a_H,\xi')$. Moreover, $\tau:\xi|_{X-\{x\}}\isom\xi'|_{X-\{x\}}$ gives an isomorphism $h(a_H,\xi)\cong h(a_H,\xi')$, hence we have an isomorphism
\begin{equation*}
\eta|_{X-\Delta_a}\cong h(a_H,\xi):X-\Delta_a\to[\frh^{\rs}/H]_D.
\end{equation*}
In other words, the image of the fiber $(\overrightarrow{h}\overrightarrow{e})^{-1}(\tilxi)$ in $\Mpar_{H,a_H,x}$ is contained in the image of the following stack in $\Mpar_{H,\tila_H}$:
\begin{equation*}
\calH_{\Delta_a,\xi}=\{(\eta,\sigma)|\eta\in\Mpar_{H,a_H,x},\sigma:\eta|_{X-\Delta_a}\cong h(a_H,\xi)\}.
\end{equation*}
Clearly, $\calH_{\Delta_a,\xi}$ is the a product of local Hecke modifications of the parabolic Hitchin triple $h(a_H,\xi)$. By the discussion in \cite[Sec. 3.3]{GSI}, each local Hecke modification at $v\in \Delta_a$ is isomorphic to an affine Springer fiber in either the affine flag variety (if $v=x$) or the affine Grassmannian (if $v\neq x$). By the dimension formula of affine Springer fibers (see \cite{Bezr} and \cite[3.7]{NgoFL}), we have
\begin{equation*}
\dim\calH_{\Delta_a,\xi}\leq\sum_{v\in \Delta_a}\delta_H(a_H,v)=\delta_H(a_H).
\end{equation*}
In other words, 
\begin{equation}\label{eq:smallfiber}
\dim(\overleftarrow{h}\overleftarrow{e}(\overrightarrow{h}\overrightarrow{e})^{-1}(\tilxi))\leq\delta_H(a_H).
\end{equation}

Let $\tcB=\tilmu_{H,\Theta}(\tcA_{H,\Theta})$. For each $\delta\in\ZZ_{>0}$, let $\tcB_\delta=\tilmu_{H,\Theta}(q_{H,\Theta}^{-1}(\calA_{H,\delta}))\subset\tcB$ and $\partial\tcB_\delta=\tcB_{\delta}\cap\partial U$. We have
\begin{eqnarray*}
\dim(\partial\tcB_{\delta})\leq\dim(\tcB_{\delta})-1&=&\dim( q_{H,\Theta}^{-1}(\calA_{H,\delta}))-1\\
&\leq&\dim(\tcA_{H,\Theta})-\delta-1=\dim(\tcB)-\delta-1.
\end{eqnarray*}
Here we have used the codimension estimate \cite[Prop. 3.5.5]{GSI}. Since the morphism $\fpar_{\Theta}:\Mpar_\Theta\to\tcA_\Theta$ is flat, we have
\begin{eqnarray*}
\dim(\fpar_{\Theta})^{-1}(\partial\tcB_\delta)&=&\dim(\fpar_\Theta)+\dim(\partial\tcB_\delta)\\
&\leq&\dim(\fpar_\Theta)+\dim(\tcB)-\delta-1=d-\delta-1.
\end{eqnarray*}

By the inequality (\ref{eq:smallfiber}), the image of  $(\overrightarrow{h}\overrightarrow{e})^{-1}(\tilxi)$ for any $\tilxi\in(\fpar)^{-1}(\partial\tcB_\delta)$ has dimension $\leq\delta$ in $\Mpar_{H,\Theta}$, we conclude that the image of $(\tcCmod_{H,\Theta}*\calH)|_{\partial\tcB_\delta}$ in $\Mpar_{H,\Theta}\times_{\tcA_\Theta}{\Mpar_\Theta}$ is bounded by $\dim(\fpar_{\Theta})^{-1}(\partial\tcB_\delta)+\delta\leq(d-\delta-1)+\delta=d-1.$ Since $\partial\tcB$ is the union of $\partial\tcB_{\delta}$ for finitely many $\delta\in\ZZ_{>0}$, this verifies the condition (G-2)' for $\tcCmod_{H,\Theta}*\calH$.

Similarly, one can verify (G-2)' for $\calH_H*\tcCmod_{H,\Theta}$. This completes the proof of the lemma.
\end{proof}


\subsection{Proof of Theorem \ref{th:endo}--Reduction to the generic points}

Since $[\tcC_{H,\Theta}]_\#$ is $\tilW_H\rtimes^{\rho}\Theta$-equivariant, it is in particular equivariant under the lattice $\xcoch(T)$. Therefore, $[\tcC_{H,\Theta}]_\#$ induces a map between the $\kappa$-parts
\begin{equation}\label{eq:Ckappa}
[\tcC_{H,\Theta}]_{\#,\kappa}:(\tilf_{\Theta,*}\Ql)_{\kappa}\to\tilmu_{H,\Theta,*}\theta_H^*(\tfHQl)_{\kappa}[-2r](-r)
\end{equation}
which is also equivariant under $(\tilW_{H}\rtimes^{\rho}\Theta,\tilW_\kappa\times\Theta)$. To finish the proof of Th. \ref{th:endo}, it remains to show that the sum of $[\tcC_{H,\Theta}]_{\#,\kappa}$ for all $\rho:\Theta\to\pi_0(\kappa)$:
\begin{equation}\label{eq:sumC}
\bigoplus_{\rho}[\tcC_{H,\Theta}]_{\#,\kappa}:(\tilf_{\Theta,*}\Ql)_{\kappa}\to\bigoplus_{\rho}\tilmu_{H,\Theta,*}\theta_H^*(\tfHQl)_{\kappa}[-2r](-r)
\end{equation}
is an isomorphism. Equivalently, it suffices to show that $\bigoplus_{\rho}[\tcC_{H,\Theta}]_{\#,\kappa}$ induces an isomorphism on the perverse cohomology sheaves.

Since $\tilmu_{H,\Theta}:\tcA_{H,\Theta}\to\tcA_{\Theta}$ is finite, $\tilmu_{H,\Theta,*}$ is exact in the perverse $t$-structure. Since $\theta:\tcA_{\Theta}\to\tcA$ and $\theta_H:\tcA_{H,\Theta}\to\tcA_H$ are \'etale, $\theta^*$ and $\theta_H^*$ are also exact in the perverse $t$-structure. Therefore, the map (\ref{eq:sumC}) induces the following map on perverse cohomology:
\begin{equation}\label{eq:pHC}
\bigoplus_{\rho}\pH^i([\tcC_{H,\Theta}]_{\#,\kappa}):\theta^*\pH^i(\tfQl)_{\kappa}\to\bigoplus_{\rho}\tilmu_{H,\Theta,*}\theta_H^*\pH^{i-2r}(\tfHQl)_{\kappa}(-r).
\end{equation}

By Prop. \ref{p:kappasupp}, the support of the simple constituents of $\pH^i(\tfQl)_{\kappa}$ are irreducible components of $\tcA_{\kappa}$. This implies that $\pH^i(\tfQl)_{\kappa}$ is the middle extension of its restriction to any open dense subset of $\tcA_\kappa$. Since $\theta$ is \'etale, $\theta^*\pH^i(\tfQl)_{\kappa}$ is also the middle extension of its restriction to any open dense subset of $\tcA_{\kappa,\Theta}$.

Again by Prop. \ref{p:kappasupp}, the support of the simple constituents of $\pH^i(\tfHQl)_{\kappa}$ are irreducible components $\tcA_H$, because $\kappa\in(Z\hatH)^{\Theta}$ (see Rem. \ref{rm:qsstable}). Since $\theta_H$ is \'etale, $\theta^*\pH^i(\tfHQl)_{\kappa}$ is also the middle extension of its restriction to any open dense subset of $\tcA_{H,\Theta}$. By Lem. \ref{l:AHdisj}, the morphism
\begin{equation}\label{eq:allC}
\bigsqcup_{\rho}\tilmu_{H,\Theta}:\bigsqcup_{\rho}\tcA_{H,\Theta}\to\tcA_{\kappa,\Theta}
\end{equation}
is finite and birational. Therefore, the perverse sheaf $\bigoplus_{\rho}\tilmu_{H,\Theta,*}\theta_H^*\pH^{i}(\tfHQl)_{\kappa}$ is still a middle extension of any open dense subset of $\tcA_{\kappa,\Theta}$.

From the above discussions, we conclude that both sides of (\ref{eq:pHC}) are middle extensions of their restrictions to any open dense subset of $\tcA_{\kappa,\Theta}$. Therefore, to prove that (\ref{eq:pHC}) is an isomorphism, it suffices to check it induces an isomorphism over every geometric generic point of $\tcA_{\kappa,\Theta}$. Since the morphism (\ref{eq:allC}) is finite and birational, every geometric generic point of $\tcA_{\kappa,\Theta}$ is the image of a unique geometric generic point of $\tcA_{H,\Theta}$ for a unique $H$ (or a unique $\rho$). Hence, to prove that (\ref{eq:pHC}) is an isomorphism, it suffices to show that for every $\rho$ (hence the corresponding $H$) and every geometric generic point $\eta\in\tcA_{H,\Theta}$, the restriction of $[\tcC_{H,\Theta}]_{\#,\kappa}$ on the stalks at $\eta$:
\begin{equation*}
[\tcC_{H,\Theta,\eta}]_{\#,\kappa}:(\theta^*(\tfQl)_{\kappa})_\eta\to(\theta_H^*(\tfHQl)_{\kappa})_{\eta}[-2r](-r)
\end{equation*}
is an isomorphism. For this we can use the explicit description of $\calC_H$ over the generic points of $\calA_H$ given in App. \ref{s:endocorr}.

Let $(a_H,\tilx_H)\in\tcA^{\rs}_H$ and $(a,\tilx)\in\tcA^{\rs}$ be the projections of $\eta$, which are geometric generic points of $\tcA_H$ and $\tcA$ respectively. We have
\begin{eqnarray}\label{eq:MHgen}
(\theta^*(\tfQl)_\kappa)_{\eta}&\cong&\cohog{*}{\Mpar_{a,\tilx}}_\kappa\cong\cohog{*}{\MHit_a}_\kappa;\\
\label{eq:MHHgen}
(\theta_H^*(\tfHQl)_{\kappa})_{\eta}&\cong&\cohog{*}{\Mpar_{H,a_H,\tilx_H}}_{\kappa}\cong\cohog{*}{\MHit_{H,a_H}}_\kappa.
\end{eqnarray}
Since $(a,\tilx)$ and $(a_H,\tilx_H)$ are in the regular semisimple locus, the $\xcoch(T)$-action on $\cohog{*}{\Mpar_{a,\tilx}}$ and $\cohog{*}{\Mpar_{H,a_H,\tilx_H}}$ factor through the action of $\pi_0(\calP_a)$ and $\pi_0(\calP_{a_H})$. Therefore, the $\kappa$-parts on the last two terms of (\ref{eq:MHgen}) and (\ref{eq:MHHgen}) can be understood as the parts on which $\pi_0(\calP_a)$ or $\pi_0(\calP_{a_H})$ acts through the character
\begin{equation*}
\pi_0(\calP_a)\to\pi_0(\calP_{a_H})\xrightarrow{\kappa}\Ql^\times.
\end{equation*}

By Lem. \ref{l:tcCUisC}, $\tcC_{H,\Theta,\eta}$ is the pull-back of $\calC_H$ along the morphism $\eta\hookrightarrow\tcArs_{H,\Theta}\to\calA_H$. Therefore, $\tcC_{H,\Theta,\eta}=\calC_{H,a_H}$. Combining this fact with the isomorphisms (\ref{eq:MHgen}) and (\ref{eq:MHHgen}), we get 
\begin{equation*}
[\tcC_{H,\Theta,\eta}]_\#=[\calC_{H,a_H}]_\#:\cohog{*}{\MHit_a}\to\cohog{*}{\MHit_{H,a_H}}[-2r](-r).
\end{equation*}

Hence, we have reduced Th. \ref{th:endo} to the following


\begin{lemma}\label{l:nodal}
The $\kappa$-part of $[\calC_{H,a_H}]_\#$:
\begin{equation*}
[\calC_{H,a_H}]_{\#,\kappa}:\cohog{*}{\MHit_a}_\kappa\to\cohog{*}{\MHit_{H,a_H}}_\kappa[-2r](-r)
\end{equation*}
is an isomorphism.
\end{lemma}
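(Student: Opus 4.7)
The plan is to exploit the homogeneity of the Hitchin fibers over the regular semisimple locus to reduce the claim to an explicit calculation with Picard stacks. Since $a\in\calA^{\rs}$ and $a_H\in\calA_H^{\rs}$, the fibers $\MHit_a$ and $\MHit_{H,a_H}$ are disjoint unions of torsors under the Picard stacks $\calP_a$ and $\calP_{H,a_H}$, respectively. By the description of the endoscopic correspondence recorded in App. \ref{s:endocorr}, $\calC_{H,a_H}$ is compatible with the canonical homomorphism $\calP_{H,a_H}\to\calP_a$ arising from the endoscopic construction of $H$.

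Using the torsor structure, $\cohog{*}{\MHit_a}$ decomposes canonically as $\cohog{*}{\calP_a^0}\otimes\Ql[\pi_0(\calP_a)]$, and (since we are in the regular semisimple locus) the $\xcoch(T)$-action factors through $\pi_0(\calP_a)$ via the surjection (\ref{eq:latticetopi0}). Hence the $\kappa$-isotypic summand is $\cohog{*}{\calP_a^0}$ tensored with the $\kappa$-eigenline in $\Ql[\pi_0(\calP_a)]$; an analogous identification holds on the $H$-side, with $\kappa$ pulled back along $\pi_0(\calP_{H,a_H})\to\pi_0(\calP_a)$. Under these identifications, $[\calC_{H,a_H}]_{\#,\kappa}$ becomes an explicit map between cohomologies of the neutral Picard components, equivariant for the morphism $\calP_{H,a_H}^0\to\calP_a^0$. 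The relative dimension of this morphism is precisely $r_\kappa$, matching the shift $[-2r_\kappa](-r_\kappa)$ in the statement.

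The remaining task is to verify that cap product with the fundamental class $[\calC_{H,a_H}]$ (together with the prescribed Tate twist and degree shift) induces an isomorphism on $\kappa$-isotypic cohomology. The main obstacle is the explicit computation of this fundamental class; however, at a geometric generic point the spectral/cameral geometry is controlled by a curve with only nodal singularities, so the correspondence decomposes into local contributions at the nodes, each of which is a concrete algebraic calculation that can be checked to induce an isomorphism on $\kappa$-parts after the prescribed shift and twist. Combining this with the observation that the kernel of $\pi_0(\calP_{H,a_H})\to\pi_0(\calP_a)$ acts trivially on the $\kappa$-isotypic parts, by the very definition of the endoscopic datum, yields the desired isomorphism.
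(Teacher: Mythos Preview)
Your proposal has a genuine gap and at least one factual error. First, the homomorphism of Picard stacks goes the other way: the exact sequence (\ref{eq:exactcalP}) gives $h_\calP:\mu_H^*\calP\to\calP_H$, i.e.\ $\calP_a\to\calP_{H,a_H}$, not $\calP_{H,a_H}\to\calP_a$ as you wrote. More importantly, your claim that $\MHit_a$ is a disjoint union of $\calP_a$-torsors is false: $a_H$ is a geometric generic point of $\calA_H$, so the $H$-cameral curve is smooth and $\MHit_{H,a_H}$ is indeed a $\calP_{H,a_H}$-torsor, but the image $a\in\calA_{|\kappa|}$ has $\delta(a)>0$ whenever $\kappa\notin Z\hatG$. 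Concretely, $\calP_a$ sits in the extension (\ref{eq:excalP}) by the torus $\prod_{v\in\frR(a_H)}\GG_m$, and $\MHit_a$ is a nodal compactification of a $\calP_a$-torsor, not a torsor itself. So the decomposition $\cohog{*}{\MHit_a}\cong\cohog{*}{\calP_a^0}\otimes\Ql[\pi_0(\calP_a)]$ you invoke does not hold, and the reduction to a map between cohomologies of neutral Picard components collapses.

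The paper's proof confronts this nodal geometry head-on. Using Lem.~\ref{l:simpleC}, it factors $\calC_{H,a_H}$ into a chain of elementary correspondences $\calC_i$, one for each node $v_i\in\frR(a_H)$, where each $\calC_i$ is a $\PP^1$-bundle over $\calM_{i-1}$ whose image in $\calM_i$ glues two disjoint sections $\calC^0_i,\calC^\infty_i$. The heart of the argument is the diagram (\ref{d:shizi}): a cross of two short exact sequences (from the $\PP^1$-fibration and from the gluing), in which the diagonal map $\epsilon$ compares the diagonal copy of $\cohog{j}{\calM_{i-1}}_\kappa$ with the graph of the scalar $\kappa(\beta^\vee_i)$. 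The entire proof hinges on the single fact that $\beta^\vee_i\in\Phi^\vee\setminus\Phi_H^\vee$ forces $\kappa(\beta^\vee_i)\neq 1$, so $\epsilon$ is an isomorphism and a diagram chase yields that $[\calC_i]_{\#,\kappa}$ is an isomorphism. Your final paragraph gestures at ``local contributions at the nodes'' but does not identify this mechanism; without it, there is no reason the $\kappa$-part should behave differently from any other isotypic piece.
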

\begin{proof}
By Lem. \ref{l:simpleC}(2), we can use induction to treat one correspondence $\calC_i$ at a time. The map $[\calC_i]_\#$ is given by
\begin{equation}\label{eq:gysin}
\cohog{*}{\calM_i}\xrightarrow{\overrightarrow{c_i}^*}\cohog{*}{\calC_i}\xrightarrow{\overleftarrow{c_i}_!}\cohog{*}{\calM_{i-1}}[-2](-1).
\end{equation}
where $\overleftarrow{c_i}_!$ is the Gysin map of the $\PP^1$-fibration $\overleftarrow{c_i}$. Since $\calP_a$ acts on the diagram (\ref{d:compC}), it makes sense to talk about the $\kappa$-part of the cohomology groups in (\ref{eq:gysin}). We need to show that
\begin{equation*}
[\calC_i]_{\#,\kappa}:\cohog{*}{\calM_i}_\kappa\to\cohog{*}{\calC_i}_\kappa\to\cohog{*}{\calM_{i-1}}_\kappa[-2](-1)
\end{equation*}
is an isomorphism.

We first analyze the $\PP^1$-fibration $\overleftarrow{c_i}$. We have a distinguished triangle
\begin{equation*}
\const{\calM_{i-1}}\to\overleftarrow{c_i}_*\const{\calC_i}\to\const{\calM_{i-1}}[-2](-1)\to
\end{equation*}
where the second map is the Gysin map. This triangle gives a long exact sequence
\begin{equation*}
\cdots\to\cohog{j}{\calM_{i-1}}\xrightarrow{\overleftarrow{c_i}^*}\cohog{j}{\calC_i}\xrightarrow{\overleftarrow{c_i}_!}\cohog{j-2}{\calM_{i-1}}(1)\to\cdots
\end{equation*}

We then analyze the birational morphism $\overrightarrow{c_i}$. Let $\calN_i\subset\calM_i$ be the common image of $\calC^0_i$ and $\calC^\infty_i$. Since $\overrightarrow{c_i}$ identifies $\calC^0_i$ and $\calC^\infty_i$ and is an isomorphism elsewhere, we have a distinguished triangle
\begin{equation*}
\const{\calM_{i}}\to\overrightarrow{c_i}_*\const{\calC_i}\xrightarrow{\delta}\dfrac{b^0_*\const{\calC_i^0}\oplus b^\infty_*\const{\calC^\infty_i}}{b_*\const{\calN_i}}\to,
\end{equation*}
where $b^0,b^\infty,b$ are the natural morphisms from $\calC^0_i,\calC^\infty_i$ and $\calN_i$ to $\calM_i$. The map $\delta$ is induced from the restrictions from $\calC_i$ to $\calC^0_i$ and $\calC^\infty_0$. This triangle gives a long exact sequence
\begin{equation*}
\cdots\to\cohog{j}{\calM_i}\xrightarrow{\overrightarrow{c_i}^*}\cohog{j}{\calC_i}\xrightarrow{\delta}\dfrac{\cohog{j}{\calC^0_i}\oplus\cohog{j}{\calC^\infty_i}}{\cohog{j}{\calN_i}}\to\cdots
\end{equation*}

These exact sequences all carry natural actions of $\pi_0(\calP_a)$. Taking the $\kappa$-parts of the $\pi_0(\calP_a)$-action, we still get long exact sequences. In particular, we get a diagram
\begin{equation}\label{d:shizi}
\xymatrix{& \cohog{j}{\calM_{i-1}}_\kappa\ar[d]^{\overleftarrow{c_i}^*}\ar[dr]^{\epsilon}\\
\cohog{j}{\calM_i}_\kappa\ar[r]^{\overrightarrow{c_i}^*}\ar[dr]^{[\calC_i]_{\#,\kappa}} & \cohog{j}{\calC_i}_\kappa\ar[r]^{\delta}\ar[d]^{\overleftarrow{c_i}_!} & \dfrac{\cohog{j}{\calC^0_i}_\kappa\oplus\cohog{j}{\calC^\infty_i}_\kappa}{\cohog{j}{\calN_i}_\kappa}\\
& \cohog{j-2}{\calM_{i-1}}_\kappa(-1)}
\end{equation}
where the vertical and horizontal lines are exact in the middle.
 
We claim that the map $\epsilon$ in the above diagram is an isomorphism. In fact, if we use the sections $s^0_i$ and $s^\infty_i$ to fix identifications:
\begin{equation*}
\cohog{j}{\calC^0_i}_\kappa\cong\cohog{j}{\calM_{i-1}}_\kappa\cong\cohog{j}{\calC^\infty_i}_\kappa,
\end{equation*}
the image of $\cohog{j}{\calM_{i-1}}_\kappa$ in $\cohog{j}{\calC^0_i}_\kappa\oplus\cohog{j}{\calC^\infty_i}_\kappa\cong\cohog{j}{\calM_{i-1}}_\kappa^{\oplus2}$ is the diagonal. On the other hand, by Lem. \ref{l:simpleC}(4), the image of $\cohog{j}{\calN_i}_\kappa$ in $\cohog{j}{\calC^0_i}_\kappa\oplus\cohog{j}{\calC^\infty_i}_\kappa\cong\cohog{j}{\calM_{i-1}}_\kappa^{\oplus2}$ is the graph of the scalar map $\kappa(\beta^\vee_i)$. Since $\kappa(\beta^\vee_i)\neq1$, the images of $\cohog{j}{\calM_{i-1}}_\kappa$ and $\cohog{j}{\calN_i}_\kappa$ are transversal with intersection $0$. This proves that $\epsilon$ is an isomorphism.

Since $\epsilon$ is an isomorphism, we know that the map $\overleftarrow{c_i}^*$ in the diagram (\ref{d:shizi}) is injective and $\delta$ is surjective. This being true for all $j\in\ZZ$, we conclude that the vertical and horizontal lines in diagram (\ref{d:shizi}) are also exact at the ends, i.e., they are short exact sequences. Again, because the upper tilted arrow $\epsilon$ is an isomorphism, we easily conclude that the lower tilted arrow $[\calC_i]_{\#,\kappa}$ must be an isomorphism. This completes the proof.
\end{proof}


\section{Parabolic Hitchin complexes for Langlands dual groups}\label{s:LD}

In this section, we study the stable part $(\tfQl)_{\st}$ of the enhanced parabolic Hitchin complex $\tfQl$ using the Langlands dual group of $G$. We will consider the enhanced parabolic Hitchin complexes for $G$ and its Langlands dual $\Gd$ simultaneously, and establish a relation between the lattice part of the global Springer action on $(\tfQl)_{\st}$ and certain Chern class action on $(\tfd_*\Ql)_{\st}$. Roughly speaking, on the level of perverse cohomology, the two pieces of lattice actions coming from the graded DAHA action constructed in \cite[Sec. 3]{GSII} get interchanged under Langlands duality. This is an evidence of the mirror symmetry (or T-duality) in the new formulation of the geometric Langlands conjecture.

\subsection{Statement of the results}\label{ss:stateLD}

Let $G$ be an almost simple algebraic group over $k$ and let $\Gd$ be the almost simple algebraic group over $k$ which is Langlands dual to $G$. More precisely, we fix maximal tori $T\subset G$ and $\Td\subset\Gd$ and an isomorphism $\xcoch(T)\isom\xch(\Td)$, which maps the coroot system of $(G,T)$ to the root system of $(\Gd,\Td)$. We use the same curve $X$ and the same divisor $D$ to define the Hitchin fibrations
\begin{equation*}
\fHit_G:\MHit_G\to\calA_G;\hspace{1cm}f^{\vee,\Hit}:\MHit_{\Gd}\to\calA_{\Gd}
\end{equation*}
and the enhanced parabolic Hitchin fibrations
\begin{equation}\label{eq:dualfpar}
\fpar:\Mpar_G\xrightarrow{\tilf}\tcA_G;\hspace{1cm} f^{\vee,\parab}:\Mpar_{\Gd}\xrightarrow{\tfd}\tcA_{\Gd}.
\end{equation}


\subsubsection{Identification of Hitchin bases} Fix a $W$-equivariant isomorphism $\iota:\frt\isom\frtd$. This allows us to identify $\frc_D$ with $\frcd_D$ and hence gives isomorphisms
\begin{eqnarray*}
\iota_{\calA}:\calA_{G}\isom\calA_{\Gd};\\
\iota_{\tcA}:\tcA_{G}\isom\tcA_{\Gd}.
\end{eqnarray*}
Since $G$ is almost simple, the choice of $\iota$ is unique up to scalar. Therefore, the resulting $\iota_{\tcA}$ is unique up to the natural action of $\GG_m$ on $\tcA_{G}$ (recall in the diagram defining $\tcA$ in \cite[Def. 3.1.7]{GSI}, all the terms have natural $\GG_m$-actions). Since all the objects over $\tcA_G$ or $\tcA_{\Gd}$ we consider will be $\GG_m$-equivariant, this ambiguity is harmless. We therefore fix the identification $\iota$ once and for all.


\begin{lemma}
The stacks $\Mpar_{G}$ and $\Mpar_{\Gd}$ have the same dimension.
\end{lemma}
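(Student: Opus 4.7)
The plan is to compute $\dim \Mpar_G$ as an explicit polynomial in the numerical invariants $\dim G$, $\rank(G)$, $g=g(X)$, and $\deg D$, and then observe that each of these invariants is the same for $G$ and $\Gd$, since Langlands duality exchanges the root and coroot systems but preserves their cardinality and rank.

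First, I would stratify via the forgetful map $\Mpar_G \to X$ sending the quadruple $(x,\calE,\varphi,\calE^B_x)$ to $x$, and reduce to computing the dimension of the fiber $\Mpar_{G,x}$ over a fixed $x\in X$, which classifies triples $(\calE,\varphi,\calE^B_x)$ with a $B$-reduction at the fixed point $x$ compatible with $\varphi$. This fiber maps to the stack $\Bunpar_{G,x}$ of pairs $(\calE,\calE^B_x)$, which is smooth of dimension $\dim G \cdot (g-1) + \dim(G/B)$, via the standard formula $\dim \Bun_G = \dim G \cdot (g-1)$ plus the flag-variety contribution from the $B$-reduction.

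Next I would compute the relative dimension of $\Mpar_{G,x} \to \Bunpar_{G,x}$. The hypothesis $\deg D \geq 2g$ guarantees the vanishing of $\cohog{1}{X,\Ad(\calE)(D)}$, so Riemann--Roch gives $\dim \cohog{0}{X,\Ad(\calE)(D)} = \dim G \cdot (\deg D + 1 - g)$. The compatibility condition $\varphi(x) \in \Ad(\calE^B_x)(\frb) \subset \Ad(\calE)(\frg)|_x$ is a pointwise linear condition of codimension $\dim(\frg/\frb) = \dim(G/B)$. Summing these contributions,
\begin{equation*}
\dim \Mpar_G = 1 + \dim G\cdot(g-1) + \dim(G/B) + \dim G\cdot(\deg D + 1 - g) - \dim(G/B) = \dim G \cdot \deg D + 1.
\end{equation*}

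Finally, I would conclude by noting that $\dim G = \rank(G) + \#\Phi_G$, and that Langlands duality interchanges roots and coroots, so $\#\Phi_G = \#\Phi_{\Gd}$ and $\rank(G) = \rank(\Gd)$; hence $\dim G = \dim \Gd$, and the formula above gives $\dim \Mpar_G = \dim \Mpar_{\Gd}$. There is no real obstacle here; the only thing to be careful about is to use the standard dimension formulas for the relevant Artin stacks (in particular $\dim \Bun_G = \dim G\cdot(g-1)$, valid as a smooth Artin stack), and to invoke the Riemann--Roch vanishing afforded by the degree hypothesis on $D$.
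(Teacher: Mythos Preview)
Your approach is essentially the same as the paper's---both reduce to the formula $\dim\Mpar_G=\dim G\cdot\deg D+1$ together with $\dim G=\dim\Gd$---except that the paper simply cites Ng\^o's formula $\dim\MHit_G=\dim G\cdot\deg D$ and the fact from \cite{GSI} that $\dim\Mpar_G=\dim\MHit_G+1$, while you unwind the computation by hand.

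There is, however, one genuine slip in your derivation: the claim that $\deg D\geq 2g$ forces $\cohog{1}{X,\Ad(\calE)(D)}=0$ is false for arbitrary $\calE$. By Serre duality this $H^1$ is dual to $\cohog{0}{X,\Ad(\calE)(K_X-D)}$, and although $\deg(K_X-D)<0$, the bundle $\Ad(\calE)$ can be very unstable (e.g.\ for $G=\SL_2$ and $\calE=\calO(a)\oplus\calO(-a)$ with $a\gg0$), so this $H^0$ need not vanish. Consequently the fibers of $\Mpar_{G,x}\to\Bunpar_{G,x}$ can jump, and your Riemann--Roch count only gives the \emph{generic} fiber dimension. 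To make the argument rigorous you should either invoke the smoothness of $\Mpar_G$ (proved in \cite{GSI} via the deformation complex and Serre duality on the full tangent complex, which is how Ng\^o gets his formula) and then compute at a general point, or else bound the jumping locus to show it contributes no extra components. The paper's route of citing the established formulas buys you exactly this: the smoothness and dimension of $\MHit_G$ are already packaged in Ng\^o's result.
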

\begin{proof}
By \cite[4.4.6]{NgoFL}, we have
\begin{equation*}
\dim(\MHit_G)=\dim(G)\deg(D)=\dim(\Gd)\deg(D)=\dim(\MHit_{\Gd}).
\end{equation*}
Since the parabolic Hitchin stacks have one more dimension than the usual Hitchin stacks (see \cite[Cor. 3.3.4]{GSI}), we conclude that $\dim(\Mpar_G)=\dim(\Mpar_{\Gd})$.
\end{proof}

Let $d=\dim(\Mpar_G)=\dim(\Mpar_{\Gd})$. Let
\begin{equation}\label{eq:defnKL}
K:=(\tilf_*\Ql)_{\st}[d](d/2);\hspace{1cm}L:=(\tfd_*\Ql)_{\st}[d](d/2)
\end{equation}
be the {\em stable parts} of the (shifted and twisted) enhanced parabolic Hitchin complexes of $G$ and $\Gd$. For each $i\in\ZZ$, let
\begin{equation*}
K^i:=\pH^iK;\hspace{1cm}L^i:=\pH^iL
\end{equation*}
be their perverse cohomology sheaves.

Our first result in this section is a Verdier duality between $K$ and $L$:

\begin{theorem}\label{th:VD}
For each $i\in\ZZ$, there is a natural isomorphism of perverse sheaves:
\begin{equation*}
\widetilde{\VD}^i:\DD K^i\cong L^i(i).
\end{equation*}
\end{theorem}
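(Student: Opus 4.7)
The plan is to reduce the claimed identification to a statement over the regular semisimple locus $\tcArs$ and then match both sides via Langlands duality of Picard stacks.

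First I would observe that the theorem is equivalent to constructing a canonical isomorphism $K^{-i} \cong L^i(i)$. Since $\Mpar_G$ is smooth of dimension $d$ and $\tilf$ is proper, the complex $\tilf_*\Ql[d](d/2)$ is Verdier self-dual. Moreover, Verdier duality sends the cohomological correspondence implementing the action of $\lambda \in \xcoch(T)$ to that of $-\lambda$ (the transpose of the graph of an automorphism is the graph of its inverse), hence swaps the $\kappa$- and $\kappa^{-1}$-generalized eigen-subcomplexes. The stable part, corresponding to $\kappa = 1$, is thus self-dual; so $\DD K \cong K$ canonically, and $\DD K^i \cong K^{-i}$ follows by perverse Verdier duality.

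Next I would invoke the Support Theorem. By Th.~\ref{th:supp} combined with Prop.~\ref{p:kappasupp} (applied with $\kappa = 1$, for which $\tcA_\kappa = \tcA$), every simple constituent of $K^{-i}$ has full support in $\tcA$ and is the middle extension of its restriction to $\tcArs$; the same holds for $L^i$. Hence it suffices to construct the isomorphism on $\tcArs$, since middle extension then uniquely extends it to $\tcA$.

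The core computation, to be worked out in Sec.~\ref{ss:PfVD}, is to match both sides over $\tcArs$ via the Tate module of the Picard stack. Over $\tcArs$ the parabolic Hitchin fibration is essentially a $\calP$-torsor, where $\calP$ is the relative Picard stack of the cameral cover. Using the Donagi--Gaitsgory description of $\calP$ via the regular centralizer group scheme, one identifies $K|_{\tcArs}$ with a complex built from exterior powers of the $\ell$-adic Tate module $V_\ell(\calP^0/\tcArs)$ of the neutral component, and analogously $L|_{\tcArs}$ in terms of $V_\ell(\calP^{\vee,0}/\tcArs)$; the stable part precisely captures the $\pi_0(\calP)$-invariants, i.e.\ the contribution of $\calP^0$. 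The main obstacle, and the geometric heart of the result, is to construct a canonical perfect pairing
\[
V_\ell(\calP^0/\tcArs) \otimes V_\ell(\calP^{\vee,0}/\tcArs) \to \Ql(1)
\]
reflecting the Langlands (Cartier) duality between the regular centralizers for $G$ and $\Gd$. Once this pairing is in place, taking exterior powers and combining with the self-duality of the first step yields the desired isomorphism $\DD K^i \cong L^i(i)$ on $\tcArs$, and then on all of $\tcA$ by middle extension.
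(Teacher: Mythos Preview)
Your overall strategy is exactly the paper's: reduce by middle extension to an open locus, identify the stable part there with the exterior algebra on the Tate module of the Picard stack via Donagi--Gaitsgory, and match the two sides through a perfect pairing between $V_\ell(\calP^0)$ and $V_\ell(\calPdc)$. However, there is a genuine slip in the choice of open locus.

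You propose to work over $\tcArs$ and assert that ``over $\tcArs$ the parabolic Hitchin fibration is essentially a $\calP$-torsor''. This is not correct. The locus $\tcArs$ only records that $a(x)$ is regular semisimple at the marked point; the global invariant $\delta(a)$ can still be positive there (the cameral curve can be singular), and then $\MHit_a$ is \emph{not} a $\calP_a$-torsor---its dimension exceeds that of $\calP_a$ by $\delta(a)$. Consequently the exterior-algebra description of the cohomology does not hold fiberwise over all of $\tcArs$, and the Tate-module computation you want cannot be carried out there. The paper fixes this by restricting further to the open dense locus $\tcAg\subset\tcArs$ where the cameral curve is smooth; there $\delta(a)=0$, the Hitchin fibers are genuinely $\calP$-torsors, and Lem.~\ref{l:KLMP} makes the exterior-algebra identification canonical. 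Since Cor.~\ref{c:midext} allows middle extension from any open dense subset, this further restriction is free.

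The second point is that the pairing you invoke as ``Cartier duality between regular centralizers'' is not constructed, and there is no such off-the-shelf statement in the needed form. The paper builds it concretely: over $\Ag$ the Donagi--Gaitsgory isogeny (Lem.~\ref{l:Pisogeny}) gives
\[
V_\ell(\calP^0/\Ag)\cong\bigl(\homo{1}{\tcAg/\Ag}\otimes\xcoch(T)\bigr)^W,\qquad
V_\ell(\calPdc/\Ag)^*\cong\bigl(\coho{1}{\tcAg/\Ag}\otimes\xch(\Td)\bigr)_W,
\]
and the isomorphism $\beta$ is then produced from Poincar\'e duality on the \emph{smooth} cameral curve together with the identification $\xcoch(T)=\xch(\Td)$, via a $W$-averaging map (formula~\eqref{eq:betaform}). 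This step genuinely requires the smoothness available only on $\tcAg$. Once you make these two corrections, your argument coincides with the paper's.
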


The proof will be given in Sec. \ref{ss:PfVD}.

\begin{remark}\label{rm:anotherVD}
Since $\Mpar_G$ is smooth and $\tilf$ is proper, the complex $K$ is Verdier self-dual, i.e., we have a natural isomorphism $K\cong\DD K$ once we fix a fundamental class $[\Mpar]$ of $\Mpar$. Hence we have a canonical isomorphism of perverse sheaves on $\tcA$:
\begin{equation*}
\tilv^i:K^i\cong\DD K^{-i}.
\end{equation*}
Note that this Verdier self-duality is same as the one mentioned in \cite[Prop. 4.5.1]{GSI}. Therefore, Th. \ref{th:VD} can be reformulated as an isomorphism
\begin{equation*}
\widetilde{\VD'}^i: K^i\cong L^{-i}(-i).
\end{equation*}
\end{remark}

To state the second (and the more interesting) result, we need to recall two lattice actions on $K$ and $L$.


\subsubsection{The Springer action} We have constructed an action of $\xcoch(T)$ on $K$ in \cite[Prop. 4.4.6]{GSI}. By Lem. \ref{l:latticefinite}, the $\xcoch(T)$-action on $K^i$ is semisimple. Since $K$ is the stable part, $\xcoch(T)$ acts trivially on $K^i$, therefore we may consider its ``subdiagonal'' entries under the perverse filtration. More precisely, for any $\lambda\in\xcoch(T)$, the action of $\lambda-\id$ on $K$ induces the zero map on $K^i$, therefore $\ptau_{\leq i}(\lambda-\id)$ factors through:
\begin{equation*}
\ptau_{\leq i}(\lambda-\id):\ptau_{\leq i}K\to\ptau_{\leq i-1}K.
\end{equation*}
Taking the $i$-th perverse cohomology, we get
\begin{equation}\label{eq:defSp}
\Sp^i(\lambda):=\pH^i(\lambda-\id):K^i\to K^{i-1}[1],
\end{equation}
which is an extension class between the perverse sheaves $K^i$ and $K^{i-1}$.


\subsubsection{The Chern class action} In \cite[Con. 3.3.4]{GSII}, we have constructed an action of $\xcoch(\Td)$ on the parabolic Hitchin complex of $\Gd$ via certain Chern classes. We recall the construction. The evaluation morphism
\begin{equation*}
\Mpar_{\Gd}\to[\frtd/\Td]_D\to\BB\Td
\end{equation*}
gives a universal $\Td$-torsor $\calL^{\Td}$ on $\Mpar_{\Gd}$. For each $\lambda\in\xch(\Td)=\xcoch(T)$, we have the line bundle $\calL(\lambda)$ on $\Mpar_{\Gd}$ associated to $\calL^{\Td}$ and the character $\lambda:\Td\to\GG_m$. The Chern class of $\calL(\lambda)$ gives a map $c_1(\calL(\lambda)):\const{\Mpar_{\Gd}}\to\const{\Mpar_{\Gd}}[2](1)$. Taking direct image under $\tfd_*$, we get: 
\begin{equation*}
\cup c_1(\calL(\lambda)):\tfd_*\Ql\to\tfd_*\Ql[2](1).
\end{equation*}
We will concentrate on the stable part $L$ of $\tfd_*\Ql[d](d/2)$:
\begin{equation*}
\cup c_1(\calL(\lambda))_{\st}:L\subset\tfd_*\Ql[d](d/2)\xrightarrow{\cup c_1(\calL(\lambda))}\tfd_*\Ql[d+2](d/2+1)\twoheadrightarrow L[2](1).
\end{equation*}

\begin{lemma}\label{l:chernvan}
The map $\cup c_1(\calL(\lambda))_{\st}$ induces the zero map $L^i\to L^{i+2}(1)$ for each $i\in\ZZ$.
\end{lemma}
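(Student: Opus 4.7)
The plan is to reduce the statement to a dense open $\tcA^\circ\subset\tcA$ where $\tfd$ is smooth and proper with abelian variety fibers, using the Support Theorem, and there to verify that the restriction of $\calL(\lambda)$ to each Hitchin fiber has vanishing first Chern class.

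By Theorem \ref{th:supp}, each $L^i$ is the middle extension of its restriction to $\tcArs$, and by transitivity of middle extension it is also the middle extension from any dense open subscheme of $\tcA$, in particular from $\tcA^\circ$. The same holds for $L^{i+2}(1)$. Since morphisms between middle extensions from a common dense open are determined by their restriction there, it suffices to prove that
\begin{equation*}
\pH^i\bigl(\cup c_1(\calL(\lambda))_{\st}\bigr)|_{\tcA^\circ}: L^i|_{\tcA^\circ}\to L^{i+2}|_{\tcA^\circ}(1)
\end{equation*}
vanishes. Over $\tcA^\circ$, the fibration $\tfd$ is smooth and proper with fibers that are torsors under abelian varieties, so $L|_{\tcA^\circ}$ splits non-canonically as a direct sum of shifted local systems, and the displayed morphism is computed by fiberwise cup product with $c_1(\calL(\lambda))|_{\text{fiber}}$.

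Next I would show that $\calL(\lambda)$ restricts trivially to each connected component of every Hitchin fiber over $\tcA^\circ$. The Picard stack $\calPd$ acts on $\Mpar_{\Gd}$; for $p\in\calPd_a$ the value $p(x)$ lies in the fiber at $x$ of the regular centralizer $J\dual_a$, canonically identified with $Z_{\Gd}(\varphi(x))$, which the lift $\tilx$ further identifies with $\Td$. The $\calPd_a$-action moves the universal $\Td$-torsor at the parabolic point by $p(x)\in\Td$, so $\calL(\lambda)|_{\text{fiber}}$ is $\calPd_a$-equivariant via the character $\chi_\lambda\colon \calPd_a\xrightarrow{\ev_x}\Td\xrightarrow{\lambda}\GG_m$. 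Over the anisotropic locus, $\calPdc_a$ is an abelian variety by \cite{NgoFL}, so $\chi_\lambda|_{\calPdc_a}$ is trivial, and $\calL(\lambda)$ restricts to each connected component of the Hitchin fiber (a $\calPdc_a$-torsor) as a $\calPdc_a$-equivariant line bundle with trivial character. By descent along this free transitive action, such a line bundle is trivial; in particular $c_1(\calL(\lambda))$ vanishes on the fiber, the fiberwise cup product is zero, and, combined with the middle-extension reduction, this gives the lemma.

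The hardest step will be pinning down the $\calPd$-equivariance of $\calL(\lambda)$ with the stated character $\chi_\lambda$. This relies on the Kostant-section identification of the regular centralizer with $\Td$ at regular semisimple Higgs values, which fits naturally with the Donagi-Gaitsgory description of $\calPd$ to be recalled in Sec. \ref{ss:PfVD}.
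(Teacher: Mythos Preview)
Your reduction to a dense open via middle extension is correct and matches the paper's strategy (the paper restricts to $\tcAg$). The gap is in your fiberwise argument. You claim that for $p\in\calPd_a$ the ``value $p(x)$'' lies in $\Td$, giving a character $\chi_\lambda:\calPd_a\to\GG_m$. But a point $p\in\calPd_a$ is a $J\dual_a$-torsor on $X$, and its restriction to $x$ is a $\Td$-\emph{torsor} (after the identification via $\tilx$), not an element of $\Td$. There is no evaluation morphism $\calPd_a\to\Td$; there is only $\calPd_a\to\BB\Td$, which is precisely the tautological torsor $\calQ^{\Td}$. The diagram (\ref{d:PMtorsor}) then gives $\act^*\calL(\lambda)\cong pr_1^*\calQ(\lambda)\otimes pr_2^*\calL(\lambda)$, where $\calQ(\lambda)|_{\calPd_a}$ is a genuine, generically nontrivial line bundle, not the pullback of a character. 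Consequently $\calL(\lambda)$ is \emph{not} $\calPdc_a$-equivariant in the sense you need, and the descent-to-a-point argument for triviality fails. Indeed, pulling back via the Kostant section one finds (see (\ref{eq:LPandL})) that $\calL(\lambda)$ on the fiber is $\calQ(\lambda)$ up to a constant twist, and $\calQ(\lambda)|_{\calPd_a}$ is described via the Poincar\'e bundle in the proof of Lemma~\ref{l:computepsi}; it is not trivial for generic $\tilx$.

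What does hold, and suffices, is the weaker statement that $c_1(\calL(\lambda))$ vanishes fiberwise: $\calQ(\lambda)|_{\calPdc_a}$ is a \emph{multiplicative} line bundle on the abelian variety $\calPdc_a$ (since $\jmath$ is a group homomorphism), hence lies in $\Pic^0(\calPdc_a)$ and has vanishing first Chern class in $\Ql$-cohomology. This is exactly what the paper extracts from \cite[Lem.~5.1.3]{GSII}, reformulated in (\ref{eq:chern12}) as the statement that $c_1(\calQ(\lambda))^\natural$ lands in $\coho{1}{\calPd/\calA}_{\st}$ rather than $\coho{2}$. So your approach is salvageable: replace ``trivial'' by ``algebraically equivalent to zero, hence $c_1=0$'', and justify it via multiplicativity rather than via a nonexistent character.
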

We postpone the proof of this lemma to Sec. \ref{ss:Ch}.

By Lem. \ref{l:chernvan}, we may also consider the ``subdiagonal'' entries of the map $\cup c_1(\calL(\lambda))_{\st}$ under the perverse filtration. More precisely, by Lem. \ref{l:chernvan}, the map $\ptau_{\leq i}(\cup c_1(\calL(\lambda))_{\st})$ factors through
\begin{equation*}
\ptau_{\leq i}(\cup c_1(\calL(\lambda))_{\st}):\ptau_{\leq i}L\to\ptau_{\leq i-1}(L[2](1)).
\end{equation*}
Taking the $i$-th perverse cohomology, we get
\begin{equation}\label{eq:defCh}
\Ch^i(\lambda)=\pH^i(\cup c_1(\calL(\lambda))_{\st}):L^i\to L^{i+1}[1](1).
\end{equation}

Now we can state the second result in this section, which is an identification of the above two lattice actions under the Verdier duality given in Th. \ref{th:VD}.

\begin{theorem}\label{th:LD}
For each $i\in\ZZ$ and $\lambda\in\xcoch(T)=\xch(\Td)$, we have a commutative diagram
\begin{equation}\label{d:LD}
\xymatrix{K^{-i}\ar[r]^{\tilv^{-i}}_{\sim}\ar[d]_{\Sp^{-i}(-\lambda)}^{\wr} & \DD K^i\ar[d]^{(\DD\Sp^{i+1}(\lambda))[1]}_{\wr}\ar[rr]^{\widetilde{\VD}^i}_{\sim} & & L^i(i)\ar[d]^{\Ch^i(\lambda)(i)}_{\wr}\\
K^{-i-1}[1]\ar[r]^{\tilv^{-i-1}[1]}_{\sim} & \DD K^{i+1}[1]\ar[rr]^{\widetilde{\VD}^{i+1}}_{\sim} & & L^{i+1}[1](i+1)}
\end{equation}
Here the isomorphism $\widetilde{\VD}^i$ is given in Th. \ref{th:VD} and the isomorphism $\tilv^i$ is given in Rem. \ref{rm:anotherVD}.
\end{theorem}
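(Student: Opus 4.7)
The plan is to reduce the commutativity of diagram (\ref{d:LD}) to a computation on the regular semisimple locus $\tcArs$, where both $K$ and $L$ admit explicit models in terms of the Tate module of the Picard stack $\calP$. The two essential ingredients will be (i) an $\Ext^{1}$-rigidity statement for middle extensions, which performs this reduction, and (ii) a compatibility between an Abel-Jacobi-type map from cameral covers to $\calP$ and the Poincar\'e biextension between $\calP^{0}$ and $\calPdc$.

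\textbf{Middle-extension reduction.} By Th. \ref{th:supp} applied to the stable parts, every $K^{i}$ and $L^{i}$ is the middle extension of a local system on $\tcArs$. The elementary observation is that for an open embedding $\tilj:\tcArs\hookrightarrow\tcA$ with closed complement of positive codimension and simple perverse sheaves $\calF,\calG$ of the form $\tilj_{!*}(\text{local system})$, the restriction map $\tilj^{*}:\Ext^{1}(\calF,\calG)\hookrightarrow\Ext^{1}(\tilj^{*}\calF,\tilj^{*}\calG)$ is injective, because the kernel would come from perverse sheaves supported in codimension $\geq 1$. Both the Springer class $\Sp^{i}(\lambda)$ and the Chern class $\Ch^{i}(\lambda)$ are precisely such $\Ext^{1}$-elements (they arise from the perverse truncation of globally defined maps on $K$, $L$), and the vertical arrows $\tilv^{i}$, $\widetilde{\VD}^{i}$ are defined globally. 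Consequently the commutativity of (\ref{d:LD}) follows from the restriction of the diagram to $\tcArs$.

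\textbf{Explicit models via Tate modules.} Over $\tcArs$, use the description of $\calP$ developed in Sec. \ref{ss:PfVD} (based on Donagi--Gaitsgory). A choice of Kostant section makes $\Mpar|_{\tcArs}$ a $\tcP$-torsor, so that the perverse cohomology sheaves $K^{i}|_{\tcArs}$ are identified (up to shift and Tate twist) with exterior powers of the local system $\VP$, and $L^{i}|_{\tcArs}$ with exterior powers of $\VPd$. In this model $\Sp^{-i}(-\lambda)$ is identified with contraction on $\wedge^{\bullet}\VP$ by the class of $-\lambda$ in $\VP$, obtained as the logarithm of the unipotent translation action $s_{-\lambda}$ of (\ref{eq:latticePsection}); symmetrically $\Ch^{i}(\lambda)$ is identified with exterior multiplication on $\wedge^{\bullet}\VPd$ by the class in $\VPd$ corresponding to $c_{1}(\calL(\lambda))$ via the Poincar\'e biextension of $\calP^{0}$ and $\calPdc$. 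The Weil pairing $\VP\otimes\VPd\to\Ql(1)$ produces the duality isomorphism $\widetilde{\VD}^{i}$ of Th. \ref{th:VD} (this is how Th. \ref{th:VD} is proved in Sec. \ref{ss:PfVD}), and automatically interchanges contraction on one side with exterior multiplication on the other. The core identity needed is that the element of $\VP$ attached to $s_{\lambda}$ and the element of $\VPd$ attached to $c_{1}(\calL(\lambda))$ correspond to each other under this Weil pairing; this is the Abel-Jacobi manipulation alluded to in the introduction, amounting to the statement that the Abel-Jacobi morphism into a relative Picard scheme dualizes to the first Chern class of the corresponding Poincar\'e bundle.

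\textbf{Main obstacle.} The serious difficulty splits in two. Formally, one must keep the signs, Tate twists $(i)$, shifts $[1]$, and the two distinct dualities $\tilv^{i}$ (Poincar\'e self-duality of $K$ coming from smoothness of $\Mpar_{G}$) and $\widetilde{\VD}^{i}$ (the Langlands duality produced via the Weil pairing) coherent across (\ref{d:LD}); this bookkeeping is precisely what forces the sign flip between $\Sp^{-i}(-\lambda)$ and $\Ch^{i}(\lambda)$. Geometrically, the Donagi--Gaitsgory description of $\calP$ must be exploited over all of $\tcArs$, where cameral covers are only generically smooth; the affine part of $\calP$ contributed by nodal cameral points has to be treated on equal footing with the abelian part in order to make both the Abel-Jacobi element $s_{\lambda}\in\calP^{0}$ and the Chern class $c_{1}(\calL(\lambda))\in H^{2}(\calPdc)$ pair correctly under the Weil pairing on $\VP\otimes\VPd$. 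This geometric comparison is where the real work lies; the rest of the argument is bookkeeping.
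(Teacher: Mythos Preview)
Your overall strategy—reduce via middle-extension $\Ext^{1}$-injectivity to an open set, then identify both sides with exterior algebras on Tate modules and match the Abel--Jacobi class with the Poincar\'e Chern class—is the same as the paper's. But there is a genuine gap in your choice of open set, and a second issue you do not raise at all.

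\textbf{Wrong open set.} You reduce to $\tcArs$ and assert that a Kostant section makes $\Mpar|_{\tcArs}$ a $\tcP$-torsor. This is false. Over $(a,\tilx)\in\tcArs$ one has $\delta(a,x)=0$, hence $\Mpar_{a,\tilx}\cong\MHit_{a}$; but $\MHit_{a}$ is a $\calP_{a}$-torsor only when $\delta(a)=0$, which is a strictly stronger condition. Thus your identification of $K^{i}|_{\tcArs}$ with exterior powers of $\VP$ is unjustified, and the ``main obstacle'' you describe (handling the affine part of $\calP$ over nodal cameral points) is a real obstruction to your argument as written. The paper avoids this entirely by restricting instead to $\tcAg$, the preimage of the locus $\Ag\subset\calA$ where the cameral curve is smooth (equivalently $\delta(a)=0$). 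Over $\tcAg$ the Hitchin fiber \emph{is} a $\calP_{a}$-torsor, $\calP_{a}$ is essentially an abelian variety, and the exterior-algebra model holds cleanly (Lem.~\ref{l:KLMP}, Lem.~\ref{l:Tatemod}). Since $\tcAg$ is still open dense, the same $\Ext^{1}$-injectivity (Lem.~\ref{l:control}, Cor.~\ref{c:midext}) performs the reduction.

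\textbf{The degree-zero discrepancy.} Even over $\tcAg$, matching the Abel--Jacobi element of $s_{\lambda}$ with the Chern class of $\calQ(\lambda)$ under the Weil pairing only handles the degree-$1$ component of the relevant maps $\Phi_{\lambda},\Psi_{\lambda}:\homo{*}{\tcAg/\Ag}\to\VP[1]$ (resp.\ $\VPd^{*}(1)[1]$). Since $\homo{*}{\tcAg/\Ag}$ also has a degree-$0$ piece $\Ql$, a residual class $\Delta_{\lambda}:\Ql\to\VPd^{*}(1)[1]$ could survive; nothing in your outline excludes it. The paper kills $\Delta_{\lambda}$ by first proving additivity in $\lambda$ (Lem.~\ref{l:add}), reducing to simple coroots $\alpha^{\vee}$, and then showing that both $s_{\alpha^{\vee}}$ and $c_{1}(\calQ(\alpha^{\vee}))$ vanish when restricted to the reflection wall $\tcAg_{\alpha}$—the latter using the Donagi--Gaitsgory trivialization along $D_{\alpha}$. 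This is a genuine extra step, not bookkeeping.
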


We will analyze the two lattice actions in more details in Sec.s \ref{ss:Sp} and \ref{ss:Ch} respectively, and finally prove Th. \ref{th:LD} in Sec. \ref{ss:PfLD}.


\subsection{The Tate modules of the Picard stack}\label{ss:homoP}
Let $\Ag\subset\calA$ be the open dense locus where the cameral curve is transversal to the discriminant divisor in $\frt_D$ (cf. \cite[4.6]{NgoFL}). By \cite[Lemme 4.6.3]{NgoFL}, for $a\in\Ag(k)$ the cameral curve $X_a$ is smooth. In this subsection, we study the stable parts of the homology of the Picard stack $\calP$ and the cohomology of the Picard stack $\calPd$ (the Picard stack acting on $\MHit_{\Gd}$) over $\Ag$. By \cite[Lem. A.2.3]{GSII}, we only need to understand the Tate modules of $\calP^0/\Ag$ and $\calPdc/\Ag$.

\begin{lemma}\label{l:Tatemod}
There are natural isomorphisms of local systems on $\Ag$:
\begin{eqnarray}\label{eq:TateP}
\VP\cong(\homo{1}{\tcAg/\Ag}\otimes_\ZZ\xcoch(T))^W;\\
\label{eq:TatePd}
\VPd\cong(\coho{1}{\tcAg/\Ag}\otimes_\ZZ\xch(\Td))_W.
\end{eqnarray}
\end{lemma}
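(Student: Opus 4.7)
The plan is to deduce both isomorphisms from the Donagi--Gaitsgory description \cite{DG} of the regular centralizer group scheme $J$, together with $\ell$-adic Kummer theory for the Jacobians of the (smooth) cameral curves. Recall that $\calP \to \calA$ classifies $J_a$-torsors on $X$, where $J_a = a^*J$ is pulled back along $a : X \to \frc_D$; the Donagi--Gaitsgory theorem describes $J_a$, over the locus where the cameral cover $\pi_a : X_a \to X$ is sufficiently nice, as the Weyl-invariants $(\pi_{a*}(T \times X_a))^W$ (and similarly for $\Gd$, with $\Td$ in place of $T$). For $a \in \Ag$, the smoothness of $X_a$ guaranteed by \cite[Lemme 4.6.3]{NgoFL} lets this description be used without any modification at singular fibers.

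For the first isomorphism, I would identify (in families over $\Ag$) the neutral component
\begin{equation*}
\calP^0 \sim \bigl(\Pic^0(\tcAg/\Ag) \otimes_{\ZZ} \xcoch(T)\bigr)^W
\end{equation*}
up to isogeny, where $W$ acts diagonally through its natural action on the cameral cover and on the cocharacter lattice. Since Tate modules are exact on isogenies of commutative group stacks (see \cite[Lem. A.2.3]{GSII}), commute with tensor products against a free abelian group, and commute with $W$-invariants on $\QQ_\ell$-vector spaces (because $W$ is finite), one obtains
\begin{equation*}
V_\ell(\calP^0/\Ag) \cong \bigl(V_\ell(\Pic^0(\tcAg/\Ag)) \otimes \xcoch(T)\bigr)^W \cong \bigl(\homo{1}{\tcAg/\Ag} \otimes \xcoch(T)\bigr)^W,
\end{equation*}
the second step being the Kummer identification $V_\ell(\Pic^0(X_a)) = \homo{1}{X_a}$ of the Tate module of the Jacobian with the first homology.

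For the second isomorphism, the same argument applied to $\Gd$ yields $V_\ell(\calPdc/\Ag) \cong (\homo{1}{\tcAg/\Ag} \otimes \xcoch(\Td))^W$, where we use that $\Gd$ and $G$ share the same cameral cover under our fixed identification of Hitchin bases. To put this in the form stated in the lemma, I would invoke Poincar\'e duality on the smooth proper family $\tcAg \to \Ag$ (of relative dimension one) to rewrite $\homo{1}{\tcAg/\Ag} \cong \coho{1}{\tcAg/\Ag}(1)$, and use Langlands duality $\xcoch(\Td) = \xch(T) = \xch(\Td)^\vee$ together with the natural isomorphism $M^W \cong M_W$ valid for any $\QQ_\ell[W]$-module $M$ (given by the averaging idempotent, since $W$ is finite). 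Matching dualities and the Tate twist then produces the coinvariant form $(\coho{1}{\tcAg/\Ag} \otimes \xch(\Td))_W$.

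The main obstacle is verifying that the Donagi--Gaitsgory description really does globalize over $\Ag$ in a $W$-equivariant manner and gives a $W$-equivariant isogeny of commutative group stacks; this is more or less what \cite{DG} provides, but one has to check that the relevant formulas commute with base change over $\Ag$ (using that the cameral cover family is smooth and the discriminant divisor is transverse there). The remaining bookkeeping---Tate twists, the direction of Poincar\'e duality, and swapping $^W$ with $_W$---is then formal in the category of $\QQ_\ell$-sheaves on $\Ag$.
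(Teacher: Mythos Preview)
Your approach to the first isomorphism is essentially the paper's: both reduce to an isogeny between $\calP$ and the $W$-invariant part of the Picard stack of the cameral cover, then take Tate modules and identify $V_\ell(\Pic^0)$ with $\homo{1}{}$. The paper isolates the isogeny claim as a separate Lemma~\ref{l:Pisogeny} and proves it via an explicit four-term exact sequence extracted from the Donagi--Gaitsgory description (the extra data at the ramification divisors $D_\alpha$ contribute only finite $\mu_\alpha$-torsors), which is exactly the ``main obstacle'' you flag.

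Your derivation of the second isomorphism, however, has a genuine gap in the ``matching dualities'' step. Applying the argument for $G$ to $\Gd$ gives $V_\ell(\calPdc/\Ag)\cong(\homo{1}{\tcAg/\Ag}\otimes\xcoch(\Td))^W$. You then want to reach $(\coho{1}{\tcAg/\Ag}\otimes\xch(\Td))_W$. But your three ingredients---Poincar\'e duality $\homo{1}{}\cong\coho{1}{}(1)$, averaging $M^W\cong M_W$, and $\xcoch(\Td)=\xch(\Td)^\vee$---do not combine to give this: substituting the first two yields $(\coho{1}{}(1)\otimes\xcoch(\Td))_W$, and the third identifies $\xcoch(\Td)$ with the \emph{dual} of $\xch(\Td)$, not with $\xch(\Td)$ itself. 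What is actually true is that $(\coho{1}{\tcAg/\Ag}\otimes\xch(\Td))_W$ is the linear dual of $(\homo{1}{\tcAg/\Ag}\otimes\xcoch(\Td))^W$, so the clean statement is
\[
V_\ell(\calPdc/\Ag)^*\;\cong\;(\coho{1}{\tcAg/\Ag}\otimes\xch(\Td))_W,
\]
which is precisely the form used later in the paper (see~(\ref{eq:compV})). To get the lemma exactly as stated you would need in addition a $W$-equivariant identification $\xcoch(\Td)_{\QQ_\ell}\cong\xch(\Td)_{\QQ_\ell}$ (e.g.\ via a $W$-invariant form), together with a Tate twist; this is not what ``matching dualities'' supplies, and the resulting isomorphism would no longer be the natural one needed for the proof of Theorem~\ref{th:VD}.
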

\begin{proof}
We prove the first isomorphism; the second one is similar. By \cite[Construction 3.2.8]{GSI}, we have a morphism of Picard stacks over $\calA$:
\begin{equation}\label{eq:PtoWPic2}
\jmath_{\calP}:\calP\to\stPic_{T}(\tcA/\calA)^W.
\end{equation}
In Lem. \ref{l:Pisogeny}, we will show that $\jmath_{\calP}$ is an isogeny (i.e., the kernel and cokernel of $\jmath_{\calP}$ are finite over $\calA$). In particular, over $\Ag$, $\jmath_{\calP}$ induces an isomorphisms on the $\Ql$-Tate modules of the neutral components:
\begin{equation}\label{eq:compare}
V_\ell(\jmath_{\calP}):V_\ell(\calP^0/\Ag)\cong V_\ell(\stPic^0_{T}(\tcAg/\Ag)^W/\Ag).
\end{equation}
Here $\stPic^0_T(\tcAg/\Ag)$ is the neutral component of $\stPic_T(\tcAg/\Ag)$. Note however that the fibers of $\stPic^0_{T}(\tcAg/\Ag)^W$ are not necessarily connected; by Tate modules we mean the Tate module of their neutral component.
 
For $a\in\calA(k)\subset\calA^{\ani}(k)$, the automorphisms of objects in $\stPic_T(X_a)^W$ are $T(X_a)^W=T^W$ which is finite (here we use the fact that $X_a$ is connected). Therefore $\stPic_T(\tcA)^W$ is a Deligne-Mumford stack, and it is harmless to replace $\stPic(\tcA)^W$ by its coarse moduli space $\Pic(\tcA/\calA)^W$ in calculating Tate modules.

We have an isomorphism
\begin{equation}\label{eq:WinvV}
V_{\ell}(\Pic^0_{T}(\tcA/\calA)^W/\calA)\cong(V_\ell(\Pic^0(\tcA/\calA))\otimes_{\ZZ}\xcoch(T))^W.
\end{equation}
For $a\in\Ag(k)$, the cameral curve $X_a$ is smooth, therefore we have a {\em canonical} isomorphism
\begin{equation}\label{eq:curve}
\homo{1}{\tcAg/\Ag}\cong V_\ell(\Pic^0(\tcAg/\Ag)).
\end{equation}
In fact, the Abel-Jacobi map $\tcAg\to\Pic^1(\tcAg/\Ag)$ gives a canonical isomorphism $\homo{1}{\tcAg/\Ag}\cong\homo{1}{\Pic^1(\tcAg/\Ag)}$. But since $\Pic^0(\tcAg/\Ag)$ has connected fibers over $\Ag$, we can canonically identify the local system $\homo{1}{\Pic^1(\tcAg/\Ag)}$ with $\homo{1}{\Pic^0(\tcAg/\Ag)}$ (the argument is as in Lem. \ref{l:KLMP}), hence we get the canonical isomorphism (\ref{eq:curve}).

The isomorphisms (\ref{eq:WinvV}) and (\ref{eq:curve}) give a canonical isomorphism of local systems on $\Ag$:
\begin{equation*}
V_\ell(\stPic^0_{T}(\tcAg/\Ag)^W/\Ag)\cong(\homo{1}{\tcAg/\Ag}\otimes_\ZZ\xcoch(T))^W.
\end{equation*}
This, together with the isomorphism (\ref{eq:compare}), implies the isomorphism (\ref{eq:TateP}).
\end{proof}

It remains to prove:

\begin{lemma}\label{l:Pisogeny}
The morphism $\jmath_{\calP}$ in (\ref{eq:PtoWPic2}) is an isogeny over $\calA$.
\end{lemma}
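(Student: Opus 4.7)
The strategy is to realize $\jmath_{\calP}$ as induced on Picard stacks by a morphism of commutative group schemes on $X$ whose kernel and cokernel are both finite over $\calA$, and then deduce the isogeny statement by passing from group schemes to their classifying Picard stacks.

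\textbf{Step 1 (group-scheme morphism).} I would recall the Donagi--Gaitsgory description of the regular centralizer group scheme $J$ from \cite{DG}. For each $a\in\calA(k)$ with cameral curve $\pi_a:X_a\to X$, there is a natural morphism of commutative group schemes on $X$
\begin{equation*}
j_a:J_a\longrightarrow (\pi_{a,*}T_{X_a})^W,
\end{equation*}
which is an isomorphism over the locus where $\pi_a$ is \'etale. This map is the pointwise version of the morphism underlying \cite[Construction 3.2.8]{GSI}: applying the functor of sections $\Bun_X(-)$ to $j_a$ yields exactly $\jmath_{\calP,a}:\calP_a\to\stPic_T(X_a)^W$. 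The construction makes sense in families, giving a morphism $j:J\to(\pi_*T_{\tcA})^W$ of group schemes on $X\times\calA$ (or rather a relative group scheme over the cameral data).

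\textbf{Step 2 (finiteness of kernel and cokernel of $j$).} The next step is the local analysis at branch points. By \cite[\S 2]{NgoFL} (following \cite{DG}), both $\ker(j)$ and $\coker(j)$ are finite group schemes supported on the discriminant divisor in $X\times\calA$. At a geometric point $y\in X$ with ramification group $W_y\subset W$, the stalk of $\ker(j)$ is a subgroup of the finite group scheme of $W_y$-fixed points in the component group of a Kostant section's centralizer, while $\coker(j)$ is controlled by $T^{W_y}/(T^{W_y})^\circ$ together with root-coroot data attached to $W_y$. In particular, for every $a\in\calA$ the sheaves $\ker(j_a)$ and $\coker(j_a)$ on $X$ are finite group schemes supported on the finite set $\disc(a)\subset X$.

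\textbf{Step 3 (passage to Picard stacks).} Break the four-term exact sequence
\begin{equation*}
0\to\ker(j_a)\to J_a\to(\pi_{a,*}T)^W\to\coker(j_a)\to0
\end{equation*}
into two short exact sequences through the image, and apply the derived pushforward along $X\to\Spec k$. Because $\ker(j_a)$ and $\coker(j_a)$ are finite flat group schemes supported on finitely many points of $X$, the groups $\upH^0(X,\ker j_a),\upH^1(X,\ker j_a),\upH^0(X,\coker j_a)$ are all finite; the same holds in families over any quasi-compact open of $\calA$. The long exact sequence of Picard stacks then shows that $\jmath_{\calP,a}$ has both finite kernel and finite cokernel, uniformly in $a$, proving that $\jmath_{\calP}$ is an isogeny over $\calA$.

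\textbf{Main obstacle.} The only nontrivial input is Step~2: identifying $\coker(j)$ and showing it is a \emph{finite} group scheme, not merely an \'etale one of bounded rank. This is exactly the content of the local Donagi--Gaitsgory calculation at branch points, and is the reason one restricts to the anisotropic locus where the cameral cover is connected; everything else is a formal consequence of finiteness plus the exactness of $\Bun_X(-)$ on finite group schemes. Once Step~2 is invoked as a black box from \cite{DG} (or from \cite{NgoFL}), the proof is essentially a diagram chase on Picard stacks.
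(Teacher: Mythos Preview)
Your approach is correct and uses the same Donagi--Gaitsgory input, but organized differently. The paper invokes \cite[16.3]{DG} at the level of the \emph{functor of points} of $\calP_a$: an object is a $W$-equivariant $T$-torsor $Q^T$ on $X_a$ together with, for each root $\alpha$, a trivialization of the induced $\mu_\alpha$-torsor $Q^{\mu_\alpha}$ along the fixed divisor $D_\alpha\subset X_a$ of the reflection $r_\alpha$; forgetting these trivializations is precisely $\jmath_\calP$, and one reads off directly an exact sequence of Picard stacks with ends $\bigl(\prod_\alpha\Res_{\tcA_\alpha/\calA}\mu_\alpha\bigr)^W$ and $\bigl(\prod_\alpha\stPic_{\mu_\alpha}(\tcA_\alpha/\calA)\bigr)^W$, both manifestly finite over $\calA$ since each $\mu_\alpha$ is a finite group scheme and $\tcA_\alpha\to\calA$ is finite. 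You instead work one level down, with the group-scheme map $j:J\to J^1=(\pi_*T)^W$, and then climb back up through a long exact cohomology sequence. Two small corrections to your sketch: first, by \cite[\S2.4]{NgoFL} the map $j$ is an open immersion of group schemes, so $\ker j=0$ and your Step~2 discussion of a nontrivial kernel stalk is unnecessary; second, to bound $\coker(\jmath_\calP)$ you also need $\upH^1(X,\coker j_a)$ (it vanishes since $\coker j_a$ is punctually supported, but it, not $\upH^1(X,\ker j_a)$, is the relevant group and should be listed). The paper's route buys an explicit root-theoretic formula for the kernel and cokernel of $\jmath_\calP$; yours is a cleaner black-box reduction to the group-scheme level.
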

\begin{proof}
Fix $a\in\calA(S)$. We need a description of $\calP_a$ due to Donagi-Gaitsgory in \cite[16.3]{DG} which we briefly recall here. For each root $\alpha\in\Phi$, let $D_\alpha\subset X_a$ be the divisor given by the pull-back of the wall $\frt_{\alpha,D}\subset\frt_D$. In other words, $D_\alpha$ is the fixed point locus of the action of the reflection $r_\alpha\in W$ on $X_a$. For each object $Q^T\in\stPic_T(X_a)^W(S)$, the $r_\alpha$-equivariant structure of $Q^T$ gives an isomorphism of $T$-torsors over $D_\alpha$:
\begin{equation*}
Q^T|_{D_\alpha}\twtimes{T,r_\alpha}T\cong Q^T|_{D_\alpha}.
\end{equation*}
Equivalently, spelling out the action of $r_{\alpha}$ on $T$, we get an isomorphism of $T$-torsors over $D_\alpha$:
\begin{equation}\label{eq:triv}
\left(Q^T|_{D_\alpha}\twtimes{T,\alpha}\GG_m\right)\twtimes{\GG_m,\alpha^\vee}T\cong T\times D_\alpha.
\end{equation}
The result of Donagi-Gaitsgory in {\em loc.cit.} says that $\calP_a(S)$ is the Picard groupoid of tuples $(Q^T,\{\gamma_w\}_{w\in W},\{\beta_{\alpha}\}_{\alpha\in\Phi})$ where $(Q^T,\{\gamma_w\}_{w\in W})$ is a strongly $W$-equivariant $T$-torsor on $X_a$ and $\beta_{\alpha}$ is a trivialization of the $\GG_m$-torsor $Q(\alpha)|_{D_\alpha}:=Q^T|_{D_\alpha}\twtimes{T,\alpha}\GG_m$, which is compatible with the trivialization (\ref{eq:triv}) and the $W$-equivariant structure: i.e., $\gamma_w$ sends the trivialization $\beta_{\alpha}$ to the trivialization $\beta_{w\alpha}$. 

We give a reformulation of this result of Donagi-Gaitsgory. For each $\alpha\in\Phi$, let
\begin{equation}
\mu_{\alpha}:=\ker(\GG_m\xrightarrow{\alpha^\vee}T).
\end{equation}
This is either the trivial group or the group $\mu_2$, depending on whether $\alpha^\vee$ is a primitive element of $\xcoch(T)$ or not. For $Q^T\in\stPic_T(X_a)^W(S)$, by the trivialization (\ref{eq:triv}), the $\GG_m$-torsor $Q(\alpha)|_{D_\alpha}$ in fact comes from a $\mu_{\alpha}$-torsor $Q^{\mu_\alpha}$ over $D_\alpha$. An object in $\calP_a(S)$ is just an object $(Q^T,\{\gamma_w\}_{w\in W})$ in $\stPic_T(X_a)^W(S)$ together with a trivialization of the $\mu_\alpha$ torsor $Q^{\mu_\alpha}$ over $D_\alpha$ for each $\alpha\in\Phi$, compatible with the $W$-equivariant structure of $Q^T$. Since the above discussion works for any test scheme $S$, we get an exact sequence of Picard stacks
\begin{equation}\label{eq:midexact}
\left(\prod_{\alpha\in\Phi}\Res_{\tcA_\alpha/\calA}(\mu_\alpha\times\tcA_\alpha)\right)^W\to\calP\xrightarrow{\jmath_{\calP}}\stPic_T(\tcA/\calA)^W\to\left(\prod_{\alpha\in\Phi}\stPic_{\mu_{\alpha}}(\tcA_\alpha/\calA)\right)^W.
\end{equation}
Here $\tcA_\alpha\subset\tcA$ is the pull-back of $\frt_{\alpha,D}$, and the last arrow in (\ref{eq:midexact}) sends $Q^T\in\stPic_T(X_a)^W(S)$ to the $\mu_\alpha$-torsor $Q^{\mu_\alpha}$ over $D_\alpha=S\times_{\calA}\tcA_\alpha$.

Since $\tcA_\alpha$ is finite over $\calA$, and $\mu_\alpha$ is a finite group scheme, the two ends of the sequence (\ref{eq:midexact}) are finite Picard stacks. Therefore the morphism $\jmath_{\calP}$ in the middle of (\ref{eq:midexact}) is an isogeny. This completes the proof.
\end{proof}


\subsection{Verdier duality between the Hitchin complexes}\label{ss:PfVD}
In this subsection, we give the proof of Th. \ref{th:VD}.


Using the global Kostant section $\epsilon:\calA\to\MHit_G$ (resp. $\epsilon\dual:\calA\to\MHit_{\Gd}$), we get a morphism $\tau:\calP\to\MHit_G$ (resp. $\tau\dual:\calPd\to\MHit_{\Gd}$). For $a\in\Ag(k)$, we have $\delta(a)=0$, hence by \cite[Lem. 3.5.2]{GSI}, $\MHit_a$ is a torsor under $\calP_a$. This implies that $\tau$ is an isomorphism over $\Ag$, and we also get the induced isomorphisms on homology and cohomology. However, such isomorphisms are not canonical in the sense that they depend on the choice of the Kostant section. Fortunately, if we restrict ourselves to the stable parts of the homology or cohomology, and passing to cohomology sheaves, we do get canonical isomorphisms:

\begin{lemma}\label{l:KLMP}
For each $i\in\ZZ$, there are {\em canonical} isomorphisms
\begin{eqnarray}\label{eq:canKL}
\homo{i}{\MHit_G/\Ag}_{\st}\cong\homo{i}{\calP/\Ag}_{\st};\\
\coho{i}{\MHit_{\Gd}/\Ag}_{\st}\cong\coho{i}{\calPd/\Ag}_{\st}.
\end{eqnarray}
\end{lemma}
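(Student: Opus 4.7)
The plan is to first construct the isomorphism using a choice of Kostant section, and then show that the result becomes independent of the choice when restricted to the stable part. The canonicity will be read off from the $\calP$-equivariance of the comparison map combined with the definition of the stable part.

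First I would fix a Kostant section $\epsilon:\Ag\to\MHit_G|_{\Ag}$ and use it to define the $\calP$-equivariant morphism
\begin{equation*}
\tau_\epsilon:\calP|_{\Ag}\to\MHit_G|_{\Ag},\quad p\mapsto p\cdot\epsilon.
\end{equation*}
Since $\delta(a)=0$ for $a\in\Ag$, the already-recalled \cite[Lem. 3.5.2]{GSI} guarantees that this map is an isomorphism (each fiber $\MHit_a$ is a $\calP_a$-torsor). Pushing forward to $\Ag$ then gives a $\pi_0(\calP/\Ag)$-equivariant isomorphism
\begin{equation*}
\tau_{\epsilon,*}:\homo{i}{\calP/\Ag}\isom\homo{i}{\MHit_G/\Ag}.
\end{equation*}
Restricting to the stable parts (which on both sides are defined as the summands where $\pi_0(\calP/\Ag)$ acts trivially) yields the first isomorphism of the lemma. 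The same construction, applied to $\epsilon\dual$ and the $\calPd$-action on $\MHit_{\Gd}$, produces the cohomological version for $\Gd$.

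Next I would establish canonicity. Any two Kostant sections $\epsilon_1,\epsilon_2$ differ by a unique section $s\in\calP(\Ag)$, i.e.\ $\epsilon_2=s\cdot\epsilon_1$. A direct computation gives $\tau_{\epsilon_2}=\tau_{\epsilon_1}\circ R_s$, where $R_s$ denotes right translation by $s$ on $\calP|_{\Ag}$; therefore $\tau_{\epsilon_2,*}=\tau_{\epsilon_1,*}\circ R_{s,*}$, and it suffices to show that $R_{s,*}$ acts as the identity on $\homo{i}{\calP/\Ag}_{\st}$. For this I would invoke homotopy invariance exactly as in the proof of Lem.~\ref{l:latticefinite} (ultimately \cite[Lemme 3.2.3]{LauN}): the translation action of $\calP|_{\Ag}$ on its own homology sheaves factors through $\pi_0(\calP/\Ag)$, and $R_{s,*}$ is given by the class $[s]\in\pi_0(\calP/\Ag)$. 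By the very definition of the stable part this operator is the identity, so $\tau_{\epsilon_1,*}$ and $\tau_{\epsilon_2,*}$ agree on $\homo{i}{\calP/\Ag}_{\st}$.

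The argument for $\Gd$ is identical, with $R_{s,*}$ replaced by $R_s^*$ on cohomology sheaves of $\calPd$. The main obstacle, if any, is purely bookkeeping: one has to verify that the two competing definitions of ``stable part'' (on the $\calP$-side via translations, and on the $\MHit_G$-side via the lattice action that induced (\ref{eq:kappadecomp})) are intertwined by $\tau_{\epsilon,*}$. This is immediate from the $\calP$-equivariance of $\tau_\epsilon$, so no new geometric input beyond what is already assembled above is required.
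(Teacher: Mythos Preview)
Your proof is correct and follows essentially the same approach as the paper: trivialize the torsor via a section, then use homotopy invariance (\cite[Lemme 3.2.3]{LauN}) to show that on the stable part the resulting isomorphism is independent of the section chosen. The only cosmetic difference is that the paper phrases the argument \'etale-locally with an arbitrary local lifting $m:S\to\MHit_G$ rather than a global Kostant section, and compares the induced maps on the $\MHit_G$-side rather than the $\calP$-side; the mathematical content is identical.
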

\begin{proof}
We prove the first isomorphism. For any \'etale map $S\to\Ag$, and any lifting $m:S\to\MHit_G$, we get a trivialization of the $\calP$-torsor:
\begin{equation*}
\tilm:S\times_{\calA}\calP\isom S\times_{\calA}\MHit_G.
\end{equation*}
Hence we get an isomorphism
\begin{equation}\label{eq:tauisom}
\tilm_{!,\st}:\homo{i}{S\times_{\calA}\calP/S}_{\st}\cong\homo{i}{S\times_{\calA}\MHit_G/S}_{\st}.
\end{equation}
If we choose another lifting $m':S\to\MHit_G$, then $m$ and $m'$ differ by the translation of a section $\varpi:S\to\calP$, and the two isomorphisms $\tilm_{!,\st}$ and $\tilm'_{!,\st}$ differ by the action of $\varpi\in\calP(S)$ on $\homo{i}{S\times_{\calA}\MHit_G/S}_{\st}$. By the homotopy invariance of actions on cohomology (see \cite[Lemme 3.2.3]{LauN}), the action of $\calP(S)$ on $\homo{i}{S\times_{\calA}\MHit_G/S}$ factors through $\pi_0(\calP)(S)$. Since $\pi_0(\calP)$ acts trivially on the stable part of $\homo{*}{S\times_{\calA}\MHit_G/S}$, the action of $\calP(S)$ on $\homo{i}{S\times_{\calA}\MHit_G/S}_{\st}$ is trivial. Therefore different choices of local sections give rise to the same isomorphism as in (\ref{eq:tauisom}), hence the canonicity of the isomorphisms in (\ref{eq:canKL}).
\end{proof}

We also have the following corollary of Prop. \ref{p:kappasupp}:

\begin{cor}\label{c:midext}
For any $i\in\ZZ$, $K^i$ and $L^i$ are middle extensions from their restrictions to any Zariski open dense subset of $\tcA$.
\end{cor}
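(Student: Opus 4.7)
The plan is to deduce Corollary \ref{c:midext} directly from the Support Theorem (Th. \ref{th:supp}) combined with Prop. \ref{p:kappasupp} applied to the trivial character $\kappa=1$. The key observation is that for a semisimple perverse sheaf, being a middle extension from a fixed open dense subset is equivalent to each simple constituent having support meeting the subset in a dense subset of that support; in particular, if every simple constituent has support equal to an irreducible component of $\tcA$, then the perverse sheaf is the middle extension from \emph{any} open dense subset of $\tcA$.

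First I would reduce to a statement about simple constituents. Since $\tilf$ is proper and $\Mpar_G$ is smooth, $\tilf_*\Ql$ is pure, so by the decomposition theorem $K^i$ is a semisimple perverse sheaf; the same applies to $L^i$ for $\Gd$. Hence it suffices to show that every simple constituent of $K^i$ (resp.\ $L^i$) has support equal to an irreducible component of $\tcA$: for such a simple constituent $\calF$ with support $Z$, Theorem \ref{th:supp} already gives $\calF=\tilj^{\rs}_{!*}\tilj^{\rs,*}\calF$, and once we know $Z$ is an irreducible component of $\tcA$, the perverse sheaf $\calF$ is the IC extension of a simple local system on any smooth open dense subset of $Z$, hence $\calF=j^U_{!*}j^{U,*}\calF$ for any open dense $U\subset\tcA$ (since $U\cap Z$ is open dense in $Z$).

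The support statement is exactly what Prop. \ref{p:kappasupp} gives in the case $\kappa=1$: the support of any simple constituent of $K^i$ is an irreducible component of $\tcA_1$. But by the definition of $\tcArs_\kappa$ in Sec. \ref{ss:kappadecomp}, the condition that $\kappa$ factors through $s(a,\tilx):\xcoch(T)\to\pi_0(\calP_a)$ is automatic when $\kappa=1$, so $\tcArs_1=\tcArs$, and taking closure gives $\tcA_1=\tcA$. Thus every simple constituent of $K^i$ has support an irreducible component of $\tcA$, as required.

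For $L^i$ I would apply the same reasoning to the Langlands dual group $\Gd$: Prop. \ref{p:kappasupp} (with the obvious quasi-split version, which is trivial for split $\Gd$) shows that every simple constituent of $L^i=\pH^i(\tfd_*\Ql)_{\st}[d](d/2)$ has support an irreducible component of $\tcA_{\Gd,1}=\tcA_{\Gd}$, and then the identification $\iota_{\tcA}:\tcA_G\isom\tcA_{\Gd}$ from Sec. \ref{ss:stateLD} transports the statement back to $\tcA$. There is no real obstacle here: the main content is simply to recognize $\tcA_1=\tcA$ and to assemble the two previously proved results. The only thing requiring any care is to confirm that Prop. \ref{p:kappasupp} genuinely applies to the stable part, i.e., to $\kappa=1$, which is immediate from the definitions.
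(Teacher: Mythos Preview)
Your proposal is correct and matches the paper's approach: the paper presents this as an immediate corollary of Prop.~\ref{p:kappasupp} (applied with $\kappa=1$, for which $\tcA_1=\tcA$) without writing out a proof, and your argument is exactly the unpacking of that one-line deduction. The only redundancy is that you separately invoke Th.~\ref{th:supp}, which is already absorbed into the proof of Prop.~\ref{p:kappasupp}.
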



\begin{proof}[Proof of Theorem \ref{th:VD}]
Let $\tcAg$ be the preimage of $\Ag$ in $\tcA$, and let $\Kg$ and $\Lg$ be the restrictions of $K$ and $L$ to $\tcAg$. By Lem. \ref{c:midext}, both $\DD K^i$ and $L^i$ are middle extensions from $\tcAg$, therefore it suffices to establish a natural isomorphism $\DD\Kg^i\cong\Lg^{i}(i)$.

Let $\tilp:\tcAg\to\Ag$ be the natural projection, which is smooth and proper. Since for any $a\in\Ag(k)$, we have $\delta(a)=0$, hence $\delta(a,x)=0$ for any $x\in X(k)$, therefore $\Mpar_G|_{\Ag}\isom\MHit|_{\Ag}\times_{\Ag}\tcAg$ by \cite[Lem. 3.5.4]{GSI}. Similar remark applies to $\Mpar_{\Gd}|_{\Ag}$. Therefore we get canonical isomorphisms
\begin{eqnarray}\label{eq:tilpK}
&&\DD\Kg^i\cong\homo{i+r}{\Mpar_G/\tcAg}_{\st}(d/2-r)=\tilp^*\homo{i+r}{\MHit_G/\Ag}_{\st}(d/2-r);\\
\label{eq:tilpL}
&&\Lg^i\cong\coho{i+r}{\Mpar_{\Gd}/\tcAg}_{\st}(d/2)=\tilp^*\coho{i+r}{\MHit_{\Gd}/\Ag}_{\st}(d/2). 
\end{eqnarray}
where $r=\dim\Mpar_G-\dim\tcA=\dim\Mpar_{\Gd}-\dim\tcA$ is the relative dimension of $\tilf$ and $\tfd$.

By the isomorphisms in \cite[Lem. A.2.3(2), Rem. A.2.4]{GSII}, and Lem. \ref{l:KLMP}, we have natural isomorphisms
\begin{eqnarray}\label{eq:MGwedge}
\homo{*}{\MHit_G/\Ag}_{\st}\cong\homo{*}{\calP/\Ag}_{\st}\cong\bigwedge(\VP[1]);\\
\label{eq:MGdwedge}
\coho{*}{\MHit_{\Gd}/\Ag}_{\st}\cong\coho{*}{\calPd/\Ag}_{\st}\cong\bigwedge(\VPd^*[-1]).
\end{eqnarray}
Combining with the isomorphisms (\ref{eq:tilpK}) and (\ref{eq:tilpL}), we get
\begin{eqnarray}\label{eq:Kisom}
\DD\Kg^i\cong\tilp^*\bigwedge^{i+r}(\VP)(d/2-r);\\
\label{eq:Lisom}
\Lg^i\cong\tilp^*\bigwedge^{i+r}(\VPd^*)(d/2).
\end{eqnarray}
Therefore, to prove the theorem, it suffices to give a natural isomorphism of local systems on $\Ag$:
\begin{equation}\label{eq:beta}
\beta:\VP\cong \VPd^*(1).
\end{equation}

Now we can use the explicit formula proved in Lem. \ref{l:Tatemod}:
\begin{eqnarray}\label{eq:compV}
V_\ell(\calP^{0}/\Ag)\cong(\homo{1}{\tcAg/\Ag}\otimes_\ZZ\xcoch(T))^W;\\
V_\ell(\calP^{\vee,0}/\Ag)^*\cong(\coho{1}{\tcAg/\Ag}\otimes_\ZZ\xch(\Td))_W.
\end{eqnarray}
Recall from \cite[Prop. 4.5.4]{NgoFL} that the cameral curves $X_a$ are connected. The cup product for the smooth connected projective family of curves $\tcAg\to\Ag$ gives a perfect pairing
\begin{equation*}
\coho{1}{\tcAg/\Ag}\otimes\coho{1}{\tcAg/\Ag}\xrightarrow{\cup}\coho{2}{\tcAg/\Ag}\cong\Ql(-1).
\end{equation*}
Therefore we have a natural isomorphism of local systems
\begin{equation}\label{eq:PDcurve}
\PD:\homo{1}{\tcAg/\Ag}\cong\coho{1}{\tcAg/\Ag}(1).
\end{equation}
Since $\xcoch(T)=\xch(\Td)$, we can define a natural isomorphism
\begin{eqnarray}\notag
\beta^{-1}:(\coho{1}{\tcAg/\Ag}(1)\otimes_\ZZ\xch(\Td))_W&\isom&(\homo{1}{\tcAg/\Ag}\otimes_\ZZ\xcoch(T))^W\\
\label{eq:betaform}
h\otimes\lambda&\mapsto&\sum_{w\in W}w_*\PD^{-1}(h)\otimes w\lambda.
\end{eqnarray}
hence the isomorphism (\ref{eq:beta}). This completes the proof of the theorem.
\end{proof}

\subsection{Proof of Theorem \ref{th:LD}--first reductions}
In this subsection, we make a few reduction steps towards the proof of Th. \ref{th:LD}.

First, the commutativity of the left square follows from \cite[Prop. 4.5.1]{GSI} (or rather its counterpart for $\xcoch(T)$-action on $\tfQl$), therefore we only need to prove the commutativity of the right square. We make the following simple observation about extensions of perverse sheaves.

\begin{lemma}\label{l:control}
Suppose $j:U\hookrightarrow Y$ is the inclusion of a Zariski open subset of the scheme $Y$ and $\calF_1, \calF_2$ are perverse sheaves on $U$. Let $j_{!*}\calF_1$ and $j_{!*}\calF_2$ be the middle extension perverse sheaves on $Y$. Then the restriction map
\begin{equation*}
\Ext^{1}_Y(j_{!*}\calF_1,j_{!*}\calF_2)\to\Ext^1_U(\calF_1,\calF_2)
\end{equation*}
is injective.
\end{lemma}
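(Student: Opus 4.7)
The plan is to reduce the injectivity to the vanishing of a certain $\Hom$ group on the complement $Z = Y \setminus U$, using the excision triangle together with the defining inequalities of the middle extension.

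Let $P_r = j_{!*}\calF_r$ for $r=1,2$, and let $i:Z\hookrightarrow Y$ be the closed complement of $j$. First I would invoke the distinguished triangle
\begin{equation*}
i_*i^!P_2 \to P_2 \to Rj_*j^*P_2 \to i_*i^!P_2[1]
\end{equation*}
and apply $\Hom_Y(P_1,-)$ to it, shifted by one. Taking the piece of the resulting long exact sequence around degree $+1$ gives
\begin{equation*}
\Hom_Y(P_1,i_*i^!P_2[1]) \to \Ext^1_Y(P_1,P_2) \to \Hom_Y(P_1, Rj_*j^*P_2[1]).
\end{equation*}
By the $(j^*,Rj_*)$-adjunction, the rightmost term is $\Hom_U(j^*P_1,j^*P_2[1]) = \Ext^1_U(\calF_1,\calF_2)$, and the composite is visibly the restriction map. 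So it suffices to show
\begin{equation*}
\Hom_Y(P_1,i_*i^!P_2[1]) = \Hom_Z(i^*P_1, i^!P_2[1]) = 0.
\end{equation*}

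The key step is to exploit the defining perverse inequalities of the middle extension: $i^*j_{!*}\calF_1 \in {}^pD^{\leq -1}(Z)$ and $i^!j_{!*}\calF_2 \in {}^pD^{\geq 1}(Z)$. Consequently $i^!P_2[1] \in {}^pD^{\geq 0}(Z)$ while $i^*P_1 \in {}^pD^{\leq -1}(Z)$, and the general vanishing $\Hom(A,B)=0$ for $A\in {}^pD^{\leq a}$, $B\in {}^pD^{\geq b}$ with $a<b$ gives the required $\Hom_Z(i^*P_1,i^!P_2[1])=0$. This yields the injectivity.

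There is no real obstacle; the only thing to be a little careful about is keeping the perverse $t$-structure conventions straight (both inequalities are ``strict'' — one is $\leq -1$, the other is $\geq 1$ — which is precisely what makes the argument work after the single shift). The same argument shows more: the restriction map $\Hom_Y(P_1,P_2)\to\Hom_U(\calF_1,\calF_2)$ is actually an isomorphism, since both $\Hom_Y(P_1,i_*i^!P_2)$ and $\Hom_Y(P_1,i_*i^!P_2[1])$ vanish for the same reason, although only the Ext$^1$-injectivity is needed in the sequel.
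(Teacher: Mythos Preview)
Your proof is correct and is essentially the same as the paper's: both use the excision long exact sequence to reduce to the vanishing of $\Hom_Z(i^*j_{!*}\calF_1,i^!j_{!*}\calF_2[1])$, which follows from the defining perverse inequalities $i^*j_{!*}\calF_1\in\leftexp{p}{D}^{\leq -1}(Z)$ and $i^!j_{!*}\calF_2[1]\in\leftexp{p}{D}^{\geq 0}(Z)$.
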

\begin{proof}
Let $i:Z=Y-U\hookrightarrow Y$ be the closed embedding of the complement of $U$ into $Y$. We have a long exact sequence
\begin{equation}\label{eq:extper}
\to\Hom_Z(i^*j_{!*}\calF_1,i^!j_{!*}\calF_2[1])\to\Hom_Y(j_{!*}\calF_1,j_{!*}\calF_2[1])\xrightarrow{j^*}\Hom_U(\calF_1,\calF_2[1])\to
\end{equation}
By the definition of $j_{!*}$, we have $i^*j_{!*}\calF_1\in\leftexp{p}{D}^{\leq -1}(Z)$ and $i^!j_{!*}\calF_2[1]\in\leftexp{p}{D}^{\geq 0}(Z)$. Hence the first term in (\ref{eq:extper}) vanishes; i.e., $j^*$ is injective. 
\end{proof}

By Lem. \ref{c:midext}, the perverse sheaves $K^i,L^i$ are middle extensions from the open dense subset $\tcAg$ of $\tcA$. By Lem. \ref{l:control}, in order to prove the commutativity of the diagram (\ref{d:LD}), it suffices to prove the commutativity of its restriction to $\tcAg$. 

\begin{remark}\label{rm:adp}
We need the following fact about adjunction. Since $\tilp:\tcAg\to\Ag$ is smooth of relative dimension one, we have a natural isomorphism of functors $\tilp^!\cong\tilp^*[2](1)$. For any two objects $\calF_1,\calF_2\in D^b_c(\Ag)$, we have
\begin{eqnarray*}
\Hom_{\tcAg}(\tilp^*\calF_1,\tilp^*\calF_2)&=&\Hom_{\tcAg}(\tilp^!\calF_1,\tilp^!\calF_2)\\
&=&\Hom_{\Ag}(\tilp_!\tilp^!\calF_1,\calF_2)=\Hom_{\Ag}(\homo{*}{\tcAg/\Ag}\otimes\calF_1,\calF_2)\\
&=&\Hom_{\Ag}(\homo{*}{\tcAg/\Ag},\unHom_{\Ag}(\calF_1,\calF_2))
\end{eqnarray*}
In other words, there is a natural bijection between maps
\begin{equation*}
\phi:\tilp^*\calF_1\to\tilp^*\calF_2
\end{equation*}
and maps
\begin{equation*}
\phi^\natural:\homo{*}{\tcAg/\Ag}\to\unHom_{\Ag}(\calF_1,\calF_2).
\end{equation*}
\end{remark}

By this lemma, and the isomorphisms (\ref{eq:Kisom}) and (\ref{eq:Lisom}), the commutativity of the right square in (\ref{d:LD}) over $\tcAg$ is equivalent to the commutativity of
\begin{equation}\label{eq:LD2}
\xymatrix{& \unHom(\bigwedge^{i}(\VP),\bigwedge^{i+1}(\VP)[1])\ar[dd]^{\unHom(\wedge^i\beta,\wedge^{i+1}\beta[1])}\\
\homo{*}{\tcAg/\Ag}\ar[ur]_{\Sp_i(\lambda)^\natural}\ar[dr]^{\Ch^i(\lambda)^\natural} &\\
& \unHom(\bigwedge^{i}(\VPd^*(1)),\bigwedge^{i+1}(\VPd^*(1))[1]}.
\end{equation}
Note that the labeling of the maps $\Sp(\lambda)^\natural$ and $\Ch(\lambda)^\natural$ have been switched from the perverse degree to the ordinary homological and cohomological degrees.

\subsection{The Springer action by $\xcoch(T)$}\label{ss:Sp}

In (\ref{eq:defSp}), we have defined the map $\Sp^i(\lambda):K^i\to K^{i-1}[1]$. In the end of Sec. \ref{ss:PfVD}, we rewrote the restriction of $\Sp^i(\lambda)$ to $\tcAg$ into the form
\begin{equation*}
\Sp_i(\lambda)^\natural:\homo{*}{\tcAg/\Ag}\to\unHom(\bigwedge^i(\VP),\bigwedge^{i+1}(\VP)[1]).
\end{equation*}
In this subsection, we write the map $\Sp_i(\lambda)^\natural$ in more explicit forms.

\subsubsection{Rewriting the Springer action} Recall from \cite[Rem. 4.3.7]{GSI} that we have a morphism over $\calA$
\begin{equation*}
s_\lambda:\tcAg\to\calP.
\end{equation*}
Since $\tcAg\to\Ag$ have connected fibers, $s_\lambda$ necessarily factors through the neutral component $\calP^0\subset\calP$. By \cite[Cor. 4.3.8]{GSI}, the action of $\lambda$ on $\Mpar|_{\tcAg}$ is given by
\begin{equation*}
\Mpar_G|_{\tcAg}=\tcAg\times_{\Ag}\MHit_G|_{\Ag}\xrightarrow{(s_\lambda,\id)}\calP^0|_{\Ag}\times_{\Ag}\MHit_G|_{\Ag}\xrightarrow{\act}\MHit_G|_{\Ag}.
\end{equation*}
Passing to the level of homology, we get
\begin{eqnarray*}
\homo{*}{\tcAg/\Ag}\boxtimes_{\Ag}\homo{*}{\MHit_G/\Ag}&\xrightarrow{s_{\lambda,*}\boxtimes\id}& \homo{*}{\calP^0/\Ag}\boxtimes_{\Ag}\homo{*}{\MHit_G/\Ag}\\
&\xrightarrow{\act_*}&\homo{*}{\MHit_G/\Ag}.
\end{eqnarray*}
By adjunction, we get
\begin{equation}\label{eq:homend}
\homo{*}{\tcAg/\Ag}\xrightarrow{s_{\lambda,*}}\homo{*}{\calP^0/\Ag}\xrightarrow{\cap}\unEnd(\homo{*}{\MHit_G/\Ag}).
\end{equation}
where $\cap$ is (the dual of) the cap product defined in \cite[App. A.3]{GSII}. Passing to the stable part, and using the isomorphism (\ref{eq:MGwedge}), we can rewrite (\ref{eq:homend}) as
\begin{equation*}
\homo{*}{\tcAg/\Ag}\xrightarrow{s_{\lambda,*}}\bigwedge(\VP[1])\xrightarrow{\wedge}\unEnd(\bigwedge(\VP[1])).
\end{equation*}
Here the cap product action becomes the wedge product in $\bigwedge(\VP[1])$.

We decompose the map $s_{\lambda,*}$ into $\oplus_i s_{\lambda,i}$ according to the canonical decomposition in \cite[Lem. A.1.1]{GSII}, hence get
\begin{equation*}
s_{\lambda,i}:\homo{*}{\tcAg/\Ag}\to\homo{i}{\calP^0/\Ag}[i].
\end{equation*}
Note that
\begin{equation*}
\homo{*}{\tcAg/\Ag}\xrightarrow{s_{\lambda,0}}\homo{0}{\calP^0/\Ag}=\Ql\to\unEnd(\bigwedge(\VP))
\end{equation*}
corresponds to the identity map of $\tilp^*\homo{*}{\calP^0/\Ag}\cong\homo{*}{\Mpar_G/\tcAg}_{\st}$ under the adjunction in Rem. \ref{rm:adp}. Therefore the action of $\lambda-\id$ on $\homo{*}{\Mpar_G/\tcAg}_{\st}$ is adjoint to
\begin{equation*}
\homo{*}{\tcAg/\Ag}\xrightarrow{\sum_{i\geq1}s_{\lambda,i}}\bigoplus_{i\geq1}\bigwedge^{i}(\VP[1])\xrightarrow{\cap}\unEnd(\bigwedge(\VP[1])).
\end{equation*}
Restricting to the degree -1 part and denote $s_{\lambda,1}$ by $\Phi_{\lambda}$, we conclude that $\Sp_i(\lambda)^\natural$ can be written as
\begin{eqnarray}\label{eq:finalSp}
\Sp_i(\lambda)^\natural:\homo{*}{\tcAg/\Ag}\xrightarrow{\Phi_{\lambda}}\VP[1]\\
\xrightarrow{\wedge}\unHom(\bigwedge^i(\VP),\bigwedge^{i+1}(\VP)[1]).
\end{eqnarray}

Now we need to understand the map $\Phi_\lambda$ more explicitly. For this, we first describe the morphism $s_{\lambda}:\tcAg\to\calP$ in more concrete terms. Consider the composition
\begin{equation*}
\tcAg\xrightarrow{s_{\lambda}}\calP\xrightarrow{\jmath_{\calP}}\stPic_{T}(\tcA/\calA)^W\to\stPic_T(\tcA/\calA).
\end{equation*}
This morphism gives a $T$-torsor $\calQ^T_\lambda$ on $\tcAg\times_{\calA}\tcA$. Hence for each $\xi\in\xch(T)$, we get a line bundle $\calQ_\lambda(\xi)$ on $\tcAg\times_{\calA}\tcA=\tcAg\times_{\Ag}\tcAg$. We now describe this line bundle.

\begin{lemma}\label{l:sectionlb} 
Let $\Gamma_w=\{(\tilx,w\tilx)|\tilx\in\tcAg\}\subset\tcAg\times_{\Ag}\tcAg$ be the graph of the left $w$-action, viewed as a divisor of $\tcAg\times_{\Ag}\tcAg$, then there is a canonical isomorphism
\begin{equation}\label{eq:calF}
\calQ_\lambda(\xi)\cong\calO(\sum_{w\in W}\jiao{w\lambda,\xi}\Gamma_w).
\end{equation}
\end{lemma}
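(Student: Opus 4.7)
The plan is to identify both sides of (\ref{eq:calF}) with the line bundle attached to an explicit divisor on the cameral surface $\tcAg \times_{\Ag} \tcAg$. The key step is to translate the abstract definition of $s_\lambda$ from \cite[Rem.~4.3.7]{GSI} into the Donagi-Gaitsgory picture of $\calP_a$ as $W$-equivariant $T$-torsors on $X_a$ used in the proof of Lem.~\ref{l:Pisogeny}.

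First I would unpack the construction: $\calQ^T_\lambda$ is, by definition, the pullback of the universal $T$-torsor on $\tcA \times_{\calA} \stPic_T(\tcA/\calA)$ along the map $\tcAg \to \stPic_T(\tcA/\calA)$ induced by $\jmath_\calP \circ s_\lambda$ on the first factor of $\tcAg \times_{\Ag} \tcAg$, and $\calQ_\lambda(\xi)$ is the associated line bundle via the character $\xi$. Second, I would make $s_\lambda$ concrete. For $\tilx \in \tcArs(S)$ lying over $a \in \Ag(S)$ and $x \in X(S)$, the point $s_\lambda(\tilx) \in \calP_a(S)$ is the local modification at $x$ determined by $\lambda$ using the trivialization of the cameral cover near $\tilx$. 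Under $\jmath_\calP$, this is represented, in the Donagi-Gaitsgory language, by the $W$-equivariant $T$-valued divisor
\begin{equation*}
\sum_{w \in W} w\lambda \cdot [w\tilx] \quad \text{on } X_a.
\end{equation*}
Both the $W$-symmetrization and the trivializations $\beta_\alpha$ along the walls $D_\alpha$ are forced: the former by $W$-equivariance, the latter automatically since on $\tcArs$ the support $\{w\tilx\}$ is disjoint from the walls $\tcA_\alpha$. The description then extends by continuity to all of $\tcAg$.

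Third, applying $\xi \in \xch(T)$ to this divisor and letting $\tilx$ vary, the line bundle $\calQ_\lambda(\xi)$ on $\tcAg \times_{\Ag} \tcAg$ is the one attached to the universal divisor $\sum_w \jiao{w\lambda, \xi}[w\tilx]$, which is precisely $\sum_w \jiao{w\lambda, \xi}\Gamma_w$ by the very definition of $\Gamma_w$ as the graph $\{(\tilx, w\tilx)\}$. The canonicity of the resulting isomorphism (\ref{eq:calF}) then follows from the canonicity of the assignment $\tilx \mapsto \sum_w w\lambda \cdot [w\tilx]$. The main obstacle will be the second step: rigorously translating $s_\lambda$ into the Donagi-Gaitsgory presentation. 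This amounts to a local computation near each preimage $w\tilx$ in $X_a$, and the verification of compatibility with the wall trivializations $\beta_\alpha$ ultimately reduces to the transversality built into the definition of $\Ag$; once this dictionary is in hand, the remainder is simply recognizing $\Gamma_w$ as the universal version of the $W$-orbit of $\tilx$.
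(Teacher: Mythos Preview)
Your plan correctly identifies the key local input: over $\tcArs$, the section $s_\lambda$ sends $\tilx$ to the $W$-equivariant $T$-divisor $\sum_w w\lambda\cdot[w\tilx]$, and pairing with $\xi$ gives the claimed formula. The paper establishes this same fact over the regular semisimple locus, though it phrases it through the isomorphism $\tilGr^{\rs}_T\cong\tilGr^{\rs}_J$ of \cite[Lem.~4.3.5]{GSI} rather than the Donagi--Gaitsgory description; these are equivalent dictionaries.

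The gap is in your step ``extends by continuity to all of $\tcAg$.'' You have an isomorphism of line bundles on $\tcAgrs\times_{\Ag}\tcAg$, whose complement in $\tcAg\times_{\Ag}\tcAg$ has codimension one; such an isomorphism need not extend (think of $\calO$ versus $\calO(D)$ away from $D$). Density alone does not help, and neither does separatedness of any coarse moduli, since the question is about a \emph{canonical} isomorphism of line bundles, not merely equality of isomorphism classes. The paper supplies the missing rigidity: because $s_\lambda$ lifts to a section $\tils_\lambda:\tcAg\to\tilGr_J$ (by \cite[Lem.~4.3.6]{GSI}), the torsor $\calQ^T_\lambda$ inherits a canonical trivialization $\tau$ on $\tcAg\times_{\Ag}\tcAg-\bigcup_w\Gamma_w$. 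The right-hand side has its tautological trivialization on the same open. The local computation over $\tcAgrs$ then shows these two trivializations agree there, so the isomorphism holds on the union $(\tcAg\times_{\Ag}\tcAg-\bigcup_w\Gamma_w)\cup(\tcAgrs\times_{\Ag}\tcAg)$, whose complement $\bigcup_w(\Gamma_w-\Gamma^{\rs}_w)$ has codimension at least two; Hartogs extends the isomorphism canonically to all of $\tcAg\times_{\Ag}\tcAg$. To repair your argument you must invoke this $\tilGr_J$-trivialization, or manufacture an equivalent one from the Donagi--Gaitsgory side.
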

\begin{proof}
By \cite[Lem. 4.3.6]{GSI}, the morphism $s_\lambda$ comes from a section $\tils_\lambda:\tcAg\to\tilGr_J$; i.e., we have a $J$-torsor $\calQ^J_\lambda$ on $\tcAg\times X$ with a canonical trivialization away from the graph $\Gamma$ of $\tcAg\to X$. By construction, we have
\begin{equation*}
\calQ^T_\lambda=(\id\times q)^*\calQ^J_\lambda\twtimes{q^*J}T
\end{equation*}
where $\id\times q:\tcAg\times_{\Ag}\tcAg\to\tcAg\times_{\Ag}(\Ag\times X)=\tcAg\times X$. Therefore, $\calQ^T_\lambda$ has a canonical trivialization $\tau$ over $\tcAg\times_{\Ag}\tcAg-(\id\times q)^{-1}(\Gamma)=\tcAg\times_{\Ag}\tcAg-\bigcup_{w\in W}\Gamma_w$.

On the other hand, the section $\tils_\lambda$ over $\tcArs$ is defined by the composition
\begin{equation*}
\tcArs\xrightarrow{\id\times\{\lambda\}}{\tcArs\times\xcoch(T)}\isom(\tilGr^{\rs}_T)^{\red}\hookrightarrow\tilGr^{\rs}_T\isom\tilGr^{\rs}_J.
\end{equation*}
Here the last isomorphism is the inverse of the one defined in \cite[Lem. 4.3.5]{GSI}. We first look at the morphism $\tils'_\lambda:\tcArs\to\tilGr^{\rs}_T$. This amounts to giving a $T$-torsor $\calG^T_{\lambda}$ on $\tcArs\times_{\Ag}\tcAg$ with a trivialization outside the diagonal (i.e., the graph $\Gamma^{\rs}_e$). Clearly, the associated line bundle $\calG_{\lambda}(\xi)$ with the induced trivialization on $\tcAgrs\times_{\Ag}\tcAg-\Gamma^{\rs}_e$ has the form:
\begin{equation*}
\calG_{\lambda}(\xi)=\calO(\jiao{\lambda,\xi}\Gamma^{\rs}_e). 
\end{equation*}
Here, we define $\Gamma^{\rs}_w=\Gamma^{\rs}\cap(\tcAgrs\times_{\Ag}\tcAg)$.

Next, by the construction of the isomorphism $\tilGr^{\rs}_T\isom\tilGr^{\rs}_J$ in \cite[Lem. 4.3.5]{GSI}, the line bundles $\calQ_{\lambda}(\xi)$ and $\calG_{\lambda}(\xi)$ are canonically isomorphic over the open subset
\begin{equation*}
\tcAgrs\times_{\Ag}\tcAg-\bigsqcup_{w\in W,w\neq e}\Gamma^{\rs}_w.
\end{equation*}
In view of the $W$-invariance of $\calQ_{\lambda}(\xi)$, we must have a canonical isomorphism
\begin{equation}\label{eq:Frs}
\calQ_\lambda(\xi)|_{\tcAgrs\times_{\Ag}\tcAg}\cong\calO(\sum_{w\in W}\jiao{w\lambda,\xi}\Gamma^{\rs}_w).
\end{equation}
Moreover, the trivialization of $\calQ^T_\lambda|_{\tcAgrs\times_{\Ag}\tcAg}$ on $(\tcAgrs\times_{\Ag}\tcAg-\bigcup_{w\in W}\Gamma^{\rs}_w)$ given in the isomorphism (\ref{eq:Frs}) is the same as the trivialization $\tau$ of $\calQ^T_\lambda$ on $(\tcAg\times_{\Ag}\tcAg-\bigcup_{w\in W}\Gamma_w)$ given by $\tils_\lambda$. Therefore the expression (\ref{eq:calF}) holds over
\begin{equation*}
(\tcAg\times_{\Ag}\tcAg-\bigcup_{w\in W}\Gamma_w)\bigcup(\tcArs\times_{\Ag}\tcAg)=\tcAg\times_{\Ag}\tcAg-\bigcup_{w\in W}(\Gamma_w-\Gamma^{\rs}_w).
\end{equation*}
Since $\bigcup_{w\in W}(\Gamma_w-\Gamma^{\rs}_w)$ has codimension at least two in the smooth variety $\tcAg\times_{\Ag}\tcAg$, the expression (\ref{eq:calF}) must hold on the whole $\tcAg\times_{\Ag}\tcAg$.
\end{proof}

Consider the degree $-1$ part of $\Phi_\lambda$
\begin{equation*}
\Phi_{\lambda,1}:\homo{1}{\tcA/\calA}\to\VP.
\end{equation*}


\begin{lemma}\label{l:computePhi} Under the isomorphism (\ref{eq:TateP}), the map $\Phi_{\lambda,1}$ is given by
\begin{eqnarray*}
\homo{1}{\tcAg/\Ag}&\to&(\homo{1}{\tcAg/\Ag}\otimes_{\ZZ}\xcoch(T))^W\\
h&\mapsto&\sum_{w\in W}w_*h\otimes w\lambda.
\end{eqnarray*}
\end{lemma}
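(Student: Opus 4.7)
The plan is to reduce the computation of $\Phi_{\lambda,1}$ to a concrete calculation on the smooth family of cameral curves $\tcAg \to \Ag$, using the explicit description of the line bundles $\calQ_\lambda(\xi)$ obtained in Lem. \ref{l:sectionlb}, and the Abel--Jacobi identification (\ref{eq:curve}).

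First I would reduce to a question about $\stPic_T(\tcA/\calA)^W$. By Lem. \ref{l:Pisogeny}, $\jmath_{\calP}$ is an isogeny, hence induces an isomorphism on $V_\ell(\calP^0/\Ag) \isom V_\ell(\stPic^0_T(\tcAg/\Ag)^W/\Ag)$; under this isomorphism and (\ref{eq:WinvV}), it suffices to identify the degree $-1$ component of the map $s'_{\lambda,*} := \jmath_\calP \circ s_{\lambda,*}: \homo{*}{\tcAg/\Ag} \to V_\ell(\stPic^0_T(\tcAg/\Ag))$ after taking $W$-invariants. Dropping $W$-invariance, this is the map on Tate modules induced by the morphism $s'_\lambda: \tcAg \to \stPic_T(\tcAg/\Ag)$ (over $\Ag$) classified by the $T$-torsor $\calQ^T_\lambda$ on the product $\tcAg \times_\Ag \tcAg$.

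Next I would decompose according to characters. Under the identification $V_\ell(\Pic^0_T(\tcAg/\Ag)) \cong V_\ell(\Pic^0(\tcAg/\Ag)) \otimes_\ZZ \xcoch(T)$, for any $\xi \in \xch(T)$ the character-twist $T \xrightarrow{\xi} \GG_m$ corresponds to the pairing $\jiao{-,\xi}: \xcoch(T) \to \ZZ$; thus the $\xi$-component of $s'_{\lambda,*}$ equals the map on Tate modules induced by the morphism $\tcAg \to \Pic(\tcAg/\Ag)$ classified by $\calQ_\lambda(\xi)$. By Lem. \ref{l:sectionlb}, $\calQ_\lambda(\xi) = \calO(\sum_{w \in W} \jiao{w\lambda, \xi}\Gamma_w)$, so on each geometric fiber $X_a$ ($a \in \Ag(k)$) this morphism is $x \mapsto \calO(\sum_{w \in W} \jiao{w\lambda, \xi}\cdot [wx])$, where I have used that $\Gamma_w \cap (\{x\} \times X_a) = \{wx\}$.

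Finally, passing to $V_\ell$ via the Abel--Jacobi isomorphism (\ref{eq:curve}), the induced map $\homo{1}{X_a} \to V_\ell(\Pic^0(X_a))$ is $\ZZ$-linear in the divisor-coefficients, hence is given by $h \mapsto \sum_{w \in W} \jiao{w\lambda, \xi}\, w_* h$ (the translation between $\Pic^d$ and $\Pic^0$ is immaterial on Tate modules). Assembling over all $\xi$ through the identification $V_\ell(\Pic^0_T) \cong V_\ell(\Pic^0) \otimes \xcoch(T)$ yields $\Phi_{\lambda,1}(h) = \sum_{w \in W} w_* h \otimes w\lambda \in (\homo{1}{\tcAg/\Ag} \otimes_\ZZ \xcoch(T))^W$, as desired. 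The main technical care is in matching the sign/duality conventions relating $T$-torsors to $\Pic \otimes \xcoch(T)$ with the Abel--Jacobi normalization of (\ref{eq:curve}), and checking that the resulting element automatically lies in the $W$-invariants (which follows from $W$-equivariance of $s_\lambda$ and of the family of divisors $\{\Gamma_w\}$).
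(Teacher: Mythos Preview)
Your proposal is correct and follows essentially the same route as the paper: reduce via the isogeny $\jmath_{\calP}$ to $\stPic_T(\tcAg/\Ag)$, decompose along characters $\xi\in\xch(T)$, invoke Lem.~\ref{l:sectionlb} to identify the resulting map to $\Pic(\tcAg/\Ag)$ as $x\mapsto\calO(\sum_{w}\jiao{w\lambda,\xi}[wx])$, and read off the induced map on $\homo{1}{}$ via Abel--Jacobi. The only cosmetic difference is that the paper phrases the argument fiberwise at a geometric point $a\in\Ag(k)$ rather than globally over $\Ag$.
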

\begin{proof}
This is a statement about a map between local systems, hence it suffices to check it on the stalks of geometric points. We fix $a\in\Ag(k)$. For each $\xi\in\xch(T)$, consider the morphism
\begin{equation}\label{eq:sss}
X_a\xrightarrow{s_\lambda}\calP_a\to\stPic_{T}(X_a)\xrightarrow{I_\xi}\stPic(X_a).
\end{equation}
where $I_\xi$ sends a $T$-torsor to the line bundle associated to the character $\xi$. Since $\pi_0(\calP_a)$ is torsion, the map (\ref{eq:sss}) must land in $\stPic^0(X_a)$. By Lem. \ref{l:sectionlb}, the morphism (\ref{eq:sss}) takes $\tilx\in X_a$ to the line bundle $\calO(\sum_{w\in W}\jiao{w\lambda,\xi}w\tilx)\in\Pic^0(X_a)$. Therefore it induces the following map on homology:
\begin{eqnarray}\notag
\homog{1}{X_a}&\to&\homog{1}{\Pic^{0}(X_a)}\cong\homog{1}{X_a}\\
\label{eq:xipart}
h&\mapsto&\sum_{w\in W}\jiao{w\lambda,\xi}w_*h.
\end{eqnarray}
Here we use the Picard scheme $\Pic$ rather than the Picard stack $\stPic$ without losing information about Tate modules. We can rewrite (\ref{eq:xipart}) as
\begin{eqnarray*}
\homog{1}{X_a}\xrightarrow{\Phi_{\lambda,1}}&(\homog{1}{X_a}\otimes_{\ZZ}\xcoch(T))^W&\xrightarrow{\id\otimes\jiao{\xi,-}}\homog{1}{X_a}\\
h\mapsto&\Phi_{\lambda,1}(h)&\mapsto\sum_{w\in W}\jiao{w\lambda,\xi}w_*h
\end{eqnarray*}
which immediately implies
\begin{equation*}
\Phi_{\lambda,1}(h)=\sum_{w\in W}w_*h\otimes\leftexp{w}{\lambda}.
\qedhere
\end{equation*}
\end{proof}


\subsection{The Chern class action by $\xch(\Td)$}\label{ss:Ch}

\subsubsection{Rewriting the Chern class action} Recall from \cite[Construction 3.2.8]{GSI} that we have a tautological $\Td$-torsor $\calQ^{\Td}$ over $\tcPd$, and the associated line bundle $\calQ(\lambda)$ for each $\lambda\in\xch(\Td)$. By \cite[Lem. 3.2.5]{GSI} that we have a commutative diagram
\begin{equation}\label{d:PMtorsor}
\xymatrix{\tcPd\times_{\tcA}\Mpar_{\Gd}\ar[rr]^{\act}\ar@<-4ex>[d]_{\calQ^{\Td}}\ar@<3ex>[d]^{\calL^{\Td}} & & \Mpar_{\Gd}\ar[d]^{\calL^{\Td}}\\
\BB\Td\times\BB\Td\ar[rr]^{\textup{mult.}} & & \BB \Td}
\end{equation} 

Using the Kostant section $\epsilon:\calA\to\MHitreg_{\Gd}$, we get a section $\tilep:\tcA\to\MHitreg_{\Gd}\times_{\calA}\tcA\cong\Mparreg_{\Gd}\subset\Mpar_{\Gd}$ (see \cite[Lem. 3.2.7]{GSI}) and a morphism
\begin{equation*}
\tiltau:\tcPd=\tcPd\times_{\tcA}\tcA\xrightarrow{(\id,\tilep)}\tcPd\times_{\tcA}\Mpar_{\Gd}\xrightarrow{\act}\Mpar_{\Gd}.
\end{equation*}
which is an isomorphism over $\tcAg$. By diagram (\ref{d:PMtorsor}), we get
\begin{eqnarray}\notag
\tiltau^*\calL(\lambda)&=&(\id\times\tilep)^*(\calQ(\lambda)\boxtimes_{\tcA}\calL(\lambda))\cong\calQ(\lambda)\boxtimes_{\tcA}\tilep^*\calL(\lambda)\\
\label{eq:LPandL}
&=&\calQ(\lambda)\otimes\widetilde{g}^{\vee,*}\tilep^*\calL(\lambda).
\end{eqnarray}
where $\widetilde{g}^\vee:\tcPd\to\tcA$ is the projection.

By \cite[Lem. 5.1.3]{GSII}, the Chern class of the line bundle $\calQ(\lambda)$ can be written as
\begin{equation*}
c_1(\calQ(\lambda))^\natural:\homo{*}{\tcA/\calA}\to\coho{1}{\calPd/\calA}_{\st}[1](1)\subset\coho{*}{\calPd/\calA}[2](1).
\end{equation*}
The line bundle $\tilep^*\calL(\lambda)$ on $\tcA$ also induces a map
\begin{equation*}
c_1(\tilep^*\calL(\lambda))^\natural:\homo{*}{\tcA/\calA}\to\Ql[2](1)\cong\coho{0}{\calPd/\calA}_{\st}[2](1)\subset\coho{*}{\calPd/\calA}[2](1).
\end{equation*}
Putting together, using (\ref{eq:LPandL}), we can write the Chern class of $\tiltau^*\calL(\lambda)$ as:
\begin{eqnarray}\label{eq:chern12}
c_1(\tiltau^*\calL(\lambda))^\natural&=&c_1(\tilep^*\calL(\lambda))^{\natural}\oplus c_1(\calQ(\lambda))^{\natural}:\\
\notag &&
\homo{*}{\tcA/\calA}\to\coho{0}{\calPd/\calA}_{\st}[2](1)\oplus\coho{1}{\calPd/\calA}_{\st}[1](1).
\end{eqnarray}

\begin{proof}[Proof of Lem. \ref{l:chernvan}]
Since both $L^i$ and $L^{i+2}$ are middle extensions from $\tcAg$, it is enough to check this statement over $\tcAg$. Using the adjunction in Rem. \ref{rm:adp} and the isomorphism (\ref{eq:tilpL}), we can write the action of $c_1(\calL(\lambda))$ as:
\begin{equation}\label{eq:firstch}
c_1(\calL(\lambda))^\natural:\homo{*}{\tcAg/\Ag}\to\coho{*}{\MHit_{\Gd}/\Ag}[2](1)\xrightarrow{\cup}\unEnd(\coho{*}{\MHit_{\Gd}/\Ag})[2](1)
\end{equation}
where $\cup$ is the cup product on $\coho{*}{\MHit_{\Gd}/\Ag}$. Using the trivialization $\tau\dual:\calPd|_{\Ag}\isom\MHit_{\Gd}|_{\Ag}$ to identify $\coho{*}{\MHit_{\Gd}/\Ag}$ with $\coho{*}{\calPd/\Ag}$, the isomorphism (\ref{eq:firstch}) becomes
\begin{equation*}
c_1(\tiltau^*\calL(\lambda))^\natural:\homo{*}{\tcAg/\Ag}\to\coho{*}{\calPd/\Ag}[2](1)\xrightarrow{\cup}\unEnd(\coho{*}{\calPd/\Ag})[2](1).
\end{equation*}

Using (\ref{eq:chern12}), the effect of $c_1(\tiltau^*\calL(\lambda))^\natural$ on the stable part is:
\begin{eqnarray}\notag
\homo{*}{\tcAg/\Ag}&\xrightarrow{c_1(\tilep^*\calL(\lambda))^{\natural}\oplus c_1(\calQ(\lambda))^{\natural}}&\coho{0}{\calPd/\Ag}_{\st}[2](1)\oplus\coho{1}{\calPd/\Ag}_{\st}[1](1)\\
\label{eq:almostch}
&\xrightarrow{\cup}&\unEnd(\coho{*}{\calPd/\Ag}_{\st})[2](1).
\end{eqnarray}
Since the image of $c_1(\tilep^*\calL(\lambda))^{\natural}\oplus c_1(\calQ(\lambda))^{\natural}$ only involves cohomology sheaves in degree $\leq1$, using Rem. \ref{rm:adp} backwards, we see that $\cup c_1(\calL(\lambda))_{\st}$ sends $\ptau_{\leq i}\Lg$ to $\ptau_{\leq i+1}\Lg$. This proves the lemma.
\end{proof}

Since we will be concentrating on the degree 1 part of $c_1(\calL(\lambda))$, we can ignore the contribution of $c_1(\tilep^*\calL(\lambda))$ in (\ref{eq:almostch}). Using (\ref{eq:almostch}) and the isomorphism in \cite[Rem. A.2.4]{GSII}, we can finally write $\Ch^i(\lambda)^\natural$ as
\begin{eqnarray}\notag
\Ch^i(\lambda)^\natural:\homo{*}{\tcAg/\Ag}&\xrightarrow{\Psi_\lambda}&\VPd^*(1)[1]\\
\label{eq:finalCh}
&\xrightarrow{\wedge}&\unHom(\bigwedge^i(\VPd^*(1)),\bigwedge^{i+1}(\VPd^*(1))[1])
\end{eqnarray}
where $\Psi_\lambda=c_1(\calQ(\lambda))^{\natural}$, and the cup product becomes the wedge product.

Consider the degree $-1$ part of $\Psi_\lambda$
\begin{equation*}
\Psi_{\lambda,1}:\homo{1}{\tcAg/\Ag}\to\VPd^*(1).
\end{equation*}


\begin{lemma}\label{l:computepsi} Under the isomorphism (\ref{eq:TatePd}), the map $\Psi_{\lambda,1}$ is given by
\begin{eqnarray}\notag
\homo{1}{\tcAg/\Ag}&\to&(\coho{1}{\tcAg/\Ag}(1)\otimes_{\ZZ}\xch(\Td))_W\\
\label{eq:computepsi}
h&\mapsto&\PD(h)\otimes\lambda.
\end{eqnarray}
where $\PD$ is the Poincar\'e duality isomorphism defined in (\ref{eq:PDcurve}).
\end{lemma}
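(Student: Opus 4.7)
The plan is to follow the strategy of Lemma~\ref{l:computePhi}: reduce to geometric stalks over $a \in \Ag(k)$ and carry out an explicit Chern class computation. Over such an $a$, the curve $\tcAg$ specializes to the smooth projective cameral curve $X_a$, and the neutral component $\calPdc|_a$ is an abelian variety $A$ isogenous (via the Langlands dual version of the map $\jmath_\calP$ in Lemma~\ref{l:Pisogeny}) to $\stPic^0_{\Td}(X_a)^W$. The stalk version of the claim reads
\begin{equation*}
\Psi_{\lambda,1,a}: \homog{1}{X_a} \to \coho{1}{A}(1) \cong (\coho{1}{X_a}(1) \otimes_\ZZ \xch(\Td))_W, \quad h \mapsto \PD(h) \otimes \lambda.
\end{equation*}

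First I would unwind the $\natural$-operation via Remark~\ref{rm:adp} to see that $\Psi_{\lambda,1,a}$ is exactly the Künneth $(1,1)$-component of the first Chern class $c_1(\calQ(\lambda)_a) \in \coho{2}{X_a \times A}(1)$, reinterpreted as a map $\homog{1}{X_a} \to \coho{1}{A}(1)$ through the Poincaré duality pairing on $X_a$. This step parallels the translation from $s_\lambda$ to the divisor formula in Lemma~\ref{l:sectionlb}, except that the divisor computation is replaced by a Chern class computation.

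The main computational input is the classical fact about Poincaré line bundles: for any smooth projective curve $X$, the Künneth $(1,1)$-component of $c_1$ of the universal line bundle on $X \times \Pic^0(X)$ is, under the Abel--Jacobi identification $\coho{1}{\Pic^0(X)} \cong \coho{1}{X}$ already used in the proof of Lemma~\ref{l:Tatemod}, the identity class in $\coho{1}{X} \otimes \coho{1}{X}(1)$. After passing through Poincaré duality on $X_a$, this identity class produces exactly the map $\PD$ of~\eqref{eq:PDcurve}. I would then transfer this computation to $\calQ(\lambda)_a$: by the Donagi--Gaitsgory presentation (Lemma~\ref{l:Pisogeny} applied to $\Gd$), the tautological $\Td$-torsor $\calQ^{\Td}_a$ is pulled back from the universal $\Td$-torsor on $X_a \times \stPic^0_{\Td}(X_a)^W$ along an isogeny on the Picard factor, and forming the associated line bundle for the character $\lambda$ places the Chern class contribution in the $\lambda$-summand of $\coho{1}{X_a}(1) \otimes_\ZZ \xch(\Td)$.

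The main obstacle I anticipate is the bookkeeping of the $W$-coinvariant description of the target in Lemma~\ref{l:Tatemod} against the $W$-equivariance of $\calQ^{\Td}$ on $X_a \times \stPic^0_{\Td}(X_a)^W$. Concretely, one must verify that, after projection to $W$-coinvariants, the Chern class of $\calQ(\lambda)_a$ is represented by the single summand $\PD(h) \otimes \lambda$ rather than by a full $W$-averaged expression. This reconciliation is formal once one notices that $(\coho{1}{X_a}(1) \otimes_\ZZ \xch(\Td))_W$ absorbs the $W$-action on $\xch(\Td)$ into the coinvariant quotient, so that any one Weyl representative suffices. With this in hand, the formula~\eqref{eq:computepsi} follows.
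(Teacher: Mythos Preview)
Your proposal is correct and follows essentially the same route as the paper: reduce to a geometric stalk $a\in\Ag(k)$, identify $\calQ(\lambda)_a$ as the pullback of the Poincar\'e line bundle on $X_a\times\stPic^0(X_a)$ along the map $\jmath_\lambda=I_\lambda\circ\jmath_a$ (equivalently, pull back the universal $\Td$-torsor and then apply the character $\lambda$, as you phrase it), invoke the standard formula $c_1(\Poin)=\sum_i h^i\otimes\PD(h_i)$, and read off that the K\"unneth $(1,1)$-component lands in the $\lambda$-slot of $(\cohog{1}{X_a}(1)\otimes_\ZZ\xch(\Td))_W$. Your closing remark about $W$-coinvariants absorbing the Weyl action is exactly the mechanism the paper uses implicitly when it says the class is ``the image of $\sum_i h^i\otimes\PD(h_i)\otimes\lambda$'' in the coinvariant quotient.
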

\begin{proof}
This is a statement about a map between local systems, hence it suffices to check it on the stalks of geometric points. We fix $a\in\Ag(k)$. Consider the morphism
\begin{equation*}
\id\times\jmath_\lambda:X_a\times\calP_a\xrightarrow{\id\times\jmath_a}X_a\times\stPic_{\Td}(X_a)\xrightarrow{\id\times I_\lambda}X_a\times\stPic(X_a)
\end{equation*}
where $I_\lambda$ sends a $\Td$-torsor to the line bundle induced by that $\Td$-torsor and the character $\lambda\in\xch(\Td)$. Since $\pi_0(\calP_a)$ is torsion, the morphism $\jmath_\lambda$ necessarily lands in $\stPic(X_a)$. Let $\Poin$ is the Poincar\'e line bundle on $X_a\times\stPic^0(X_a)$, then
\begin{equation*}
\calQ(\lambda)=(\id\times\jmath_a)^*\Poin(\lambda)=(\id\times\jmath_\lambda)^*\Poin,
\end{equation*}
It is well-known that $c_1(\Poin)$ takes the form
\begin{equation}
c_1(\Poin)=\sum_ih^i\otimes\PD(h_i)\in\cohog{1}{X_a}\otimes\cohog{1}{\Pic^0(X_a)}(1)\cong\cohog{1}{X_a}\otimes\cohog{1}{X_a}(1)
\end{equation}
where $\{h^i\}$ and $\{h_i\}$ are dual bases of $\cohog{1}{X_a}$ and $\homog{1}{X_a}$. Therefore, $c_1(\calQ(\lambda))=(\id\times\jmath_\lambda)^*c_1(\LP)$ is the image of $\sum_ih^i\otimes\PD(h_i)\otimes\lambda\in\cohog{1}{X_a}\otimes\cohog{1}{X_a}(1)\otimes_{\ZZ}\xch(\Td)$ in $\cohog{1}{X_a}\otimes(\cohog{1}{X_a}(1)\otimes_{\ZZ}\xch(\Td))_W\cong\cohog{1}{X_a}\otimes\cohog{1}{\calP_a}_{\st}(1)$. This immediately implies (\ref{eq:computepsi}).
\end{proof}


\subsection{Proof of Theorem \ref{th:LD}}\label{ss:PfLD}
In this subsection we finish the proof of Th. \ref{th:LD}. By the reduction in the end of Sec. \ref{ss:PfVD}, the expression (\ref{eq:finalSp}) for $\Sp_i(\lambda)^\natural$ and the expression (\ref{eq:finalCh}) for $\Ch^i(\lambda)^\natural$, it remains to prove the commutativity of
\begin{equation*}
\xymatrix{& \VP[1]\ar[dd]^{\beta[1]}\\
\homo{*}{\tcAg/\Ag}\ar[ur]^{\Phi_\lambda}\ar[dr]_{\Psi_\lambda} &\\
& \VPd^*[1](1)}.
\end{equation*}

Both maps $\Phi_\lambda$ and $\Psi_\lambda$ necessarily factor through $\tau_{\geq-1}\homo{*}{\tcAg/\Ag}$. We computed $\Phi_{\lambda,1}$ in Lem. \ref{l:computePhi} and computed $\Psi_{\lambda,1}$ in Lem. \ref{l:computepsi}. Comparing the two results with the way we defined the isomorphism $\beta$ in (\ref{eq:betaform}), we conclude that for all $\lambda\in\xcoch(T)=\xch(\Td)$,
\begin{equation*}
\beta\circ\Phi_{\lambda,1}=\Psi_{\lambda,1}.
\end{equation*}
Therefore the difference $\beta\circ\Phi_\lambda-\Psi_\lambda$ must factor through a map
\begin{equation*}
\Delta_\lambda:\homo{0}{\tcAg/\Ag}\cong\Ql\to\VPd^*[1](1).
\end{equation*}
All we need to show is $\Delta_\lambda=0$. We want to reduce the problem to showing $\Delta_{\alpha^\vee}=0$ for simple coroots $\alpha^\vee$. For this we need


\begin{lemma}\label{l:add}
The maps $\Phi_{\lambda}$ and $\Psi_{\lambda}$ are additive in $\lambda$. 
\end{lemma}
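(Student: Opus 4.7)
The additivity of $\Psi_\lambda$ will be essentially formal. By construction, the line bundle $\calQ(\lambda)$ on $\tcPd$ is obtained from the tautological $\Td$-torsor $\calQ^{\Td}$ by push-out along the character $\lambda:\Td\to\GG_m$, so the assignment $\lambda\mapsto\calQ(\lambda)$ is a group homomorphism $\xch(\Td)\to\Pic(\tcPd)$, i.e., $\calQ(\lambda_1+\lambda_2)\cong\calQ(\lambda_1)\otimes\calQ(\lambda_2)$. Since the first Chern class is additive under tensor products of line bundles, $c_1(\calQ(\lambda_1+\lambda_2))=c_1(\calQ(\lambda_1))+c_1(\calQ(\lambda_2))$, and the ${}^\natural$-operation of Rem. \ref{rm:adp} preserves linear combinations in the relevant Hom group, giving $\Psi_{\lambda_1+\lambda_2}=\Psi_{\lambda_1}+\Psi_{\lambda_2}$.

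For $\Phi_\lambda$ the plan has two steps. The first is to upgrade the additivity to the geometric level: the section $s_\lambda:\tcAg\to\calP^0$ is itself additive in $\lambda$ relative to the Picard-stack multiplication $\mu$ on $\calP^0/\Ag$, namely $s_{\lambda_1+\lambda_2}=\mu\circ(s_{\lambda_1},s_{\lambda_2})$. This can be read off directly from Lem. \ref{l:sectionlb}: the associated line bundles $\calQ_\lambda(\xi)\cong\calO(\sum_{w\in W}\jiao{w\lambda,\xi}\Gamma_w)$ are manifestly $\ZZ$-linear in $\lambda\in\xcoch(T)$ for every character $\xi\in\xch(T)$, which forces the strongly $W$-equivariant $T$-torsor $\calQ^T_\lambda$ (hence the classifying morphism $s_\lambda$ to $\calP$) to be additive in $\lambda$; equivalently, additivity is visible from the $\xcoch(T)$-grading of $\tilGr_T^{\rs}$ used to construct $s_\lambda$ in \cite[Rem. 4.3.7]{GSI}.

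The second and main step will be to deduce additivity of $\Phi_\lambda=s_{\lambda,1}$ from the group-additivity of $s_\lambda$. This is the general ``linearization on the Tate module'' principle: under the canonical isomorphism $\homo{*}{\calP^0/\Ag}_{\st}\cong\bigwedge^\bullet\VP[1]$, under which the Pontryagin product $\mu_*$ corresponds to the wedge product, the degree $1$ piece is the primitive part $\VP[1]$. For any two morphisms $f,g:\tcAg\to\calP^0$ over $\Ag$, I will write $f+g=\mu\circ(f\times_\Ag g)\circ\Delta_{\tcAg}$ and use Kunneth: only the bidegrees $(1,0)$ and $(0,1)$ in $\homo{*}{\calP^0/\Ag}_{\st}\otimes\homo{*}{\calP^0/\Ag}_{\st}$ contribute to the degree $1$ component of $\mu_*\circ(f,g)_*$, the other bidegrees landing in $\bigwedge^{\geq 2}\VP$. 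In these surviving bidegrees the ``constant factor'' of $f_*$ or $g_*$ is the canonical augmentation to $\homo{0}{\calP^0/\Ag}_{\st}\cong\Ql$, independent of the morphism, and wedge with the unit is the identity. This yields $(f+g)_{*,1}=f_{*,1}+g_{*,1}$; specializing to $f=s_{\lambda_1}$, $g=s_{\lambda_2}$ gives $\Phi_{\lambda_1+\lambda_2}=\Phi_{\lambda_1}+\Phi_{\lambda_2}$. The main obstacle is precisely this last Pontryagin/Kunneth manipulation in the derived category of $\ell$-adic sheaves on $\Ag$, which I expect to be a routine application of the canonical decomposition of \cite[Lem. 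A.1.1]{GSII} together with the multiplicativity established in \cite[App. A.2]{GSII}.
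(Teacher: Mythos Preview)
Your proposal is correct and follows essentially the same approach as the paper: both arguments use $\calQ(\lambda+\mu)\cong\calQ(\lambda)\otimes\calQ(\mu)$ for $\Psi_\lambda$, and for $\Phi_\lambda$ both use the geometric additivity $s_{\lambda+\mu}=\mult\circ(s_\lambda,s_\mu)$ followed by the Pontryagin/K\"unneth computation isolating the degree~$1$ piece. The only minor difference is that you justify the additivity of $s_\lambda$ via Lem.~\ref{l:sectionlb} (which, strictly speaking, only shows $\jmath_\calP\circ s_\lambda$ is additive and then needs the isogeny Lem.~\ref{l:Pisogeny} to descend back to $\calP$), whereas the paper and your parenthetical alternative simply invoke the $\xcoch(T)$-grading of $\tilGr_T^{\rs}$ from the construction of $s_\lambda$; the latter is the cleaner route.
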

\begin{proof}
For $\Psi_{\lambda}$, since $\calQ(\lambda+\mu)\cong\calQ(\lambda)\otimes\calQ(\mu)$, we have
\begin{equation*}
c_1(\calQ(\lambda+\mu))=c_1(\calQ(\lambda))+c_1(\calQ(\mu)),
\end{equation*}
which implies the additivity of $\Psi_\lambda$.

For $\Phi_{\lambda}$, recall that it comes from the morphism $s_\lambda:\tcAg\to\calP$. These morphisms are additive in $\lambda$ (using the multiplication of $\calP$):
\begin{equation*}
\xymatrix{\tcAg\ar[r]^(.4){(s_\lambda,s_\mu)}\ar@/_1.5pc/[rr]_{s_{\lambda+\mu}} & \calP\times_{\calA}\calP\ar[r]^(.6){\mult} & \calP}.
\end{equation*}
Therefore the induced maps on homology satisfies the commutative diagram
\begin{equation*}
\xymatrix{\homo{*}{\tcAg/\Ag}\ar[rr]^(.4){s_{\lambda,*}\otimes s_{\mu,*}}\ar@/_1.8pc/[rrrr]_{s_{\lambda+\mu,*}} & & \homo{*}{\calP/\Ag}\otimes\homo{*}{\calP/\Ag}\ar[rr]^(.6){\textup{Pontryagin}} & & \homo{*}{\calP/\Ag}}
\end{equation*}
Taking the degree -1 stable parts, we get the commutative diagram
\begin{equation}\label{d:Phiadd}
\xymatrix{& \VP[1]\otimes\Ql\oplus\Ql\otimes\VP[1]\ar[dd]^{\wedge}\\
\homo{*}{\tcAg/\Ag}\ar[ur]_{\Phi_{\lambda}\otimes\tilp_!+\tilp_!\otimes\Phi_\mu}\ar[dr]^{\Phi_{\lambda+\mu}} &\\&\VP[1]}
\end{equation}
where $\tilp_!:\homo{*}{\tcAg/\Ag}\to\homo{*}{\Ag/\Ag}=\Ql$ is the push-forward along $\tilp:\tcAg\to\Ag$. The diagram (\ref{d:Phiadd}) implies that $\Phi_{\lambda+\mu}=\Phi_{\lambda}+\Phi_{\mu}$.
\end{proof}

Using Lem. \ref{l:add}, and observe that the RHS of $\Delta_\lambda$ is torsion-free, we conclude that in order to show $\Delta_\lambda=0$ for all $\lambda\in\xcoch(T)$, it suffices to show it for a $\QQ$-basis of $\xcoch(T)_{\QQ}$. Hence we can reduce the problem to the following lemma.

\begin{lemma}
For each simple coroot $\alpha^\vee\in\Phi^\vee$, the map $\Delta_{\alpha^\vee}=0$.
\end{lemma}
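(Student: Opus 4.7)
The plan is to isolate $\Delta_{\alpha^\vee}$ as a single degree-zero Ext class and then reduce its vanishing to a rank-one computation on the Levi associated with $\alpha$.

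First I would observe that, since $\tilp:\tcAg\to\Ag$ is smooth and proper of relative dimension one, Deligne's theorem gives a canonical splitting
\begin{equation*}
\homo{*}{\tcAg/\Ag}\;\simeq\;\homo{0}{\tcAg/\Ag}\,\oplus\,\homo{1}{\tcAg/\Ag}[1]\,\oplus\,\homo{2}{\tcAg/\Ag}[2].
\end{equation*}
Mapping into the object $\VPd^*[1](1)$, which is concentrated in cohomological degree $-1$, the $\homo{2}$-component gives zero for degree reasons, while the $\homo{1}$-components of $\beta\circ\Phi_{\alpha^\vee}$ and $\Psi_{\alpha^\vee}$ agree by Lemmas \ref{l:computePhi} and \ref{l:computepsi}. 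Hence $\Delta_{\alpha^\vee}$ is controlled entirely by its $\homo{0}$-component, which is a single class in $H^1(\Ag,\VPd^*(1))$.

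Next, I would use the fact that $\alpha^\vee$ is a \emph{simple} coroot to restrict the whole construction to the rank-one Levi subgroup $L_\alpha\subset G$ generated by $(\alpha^\vee,x_\alpha,x_{-\alpha})$. By the Donagi-Gaitsgory description of $\calP$ recalled in the proof of Lemma \ref{l:Pisogeny}, the section $s_{\alpha^\vee}:\tcAg\to\calP$ and the line bundle $\calQ(\alpha^\vee)$ on $\tcPd$ depend only on the $L_\alpha$- (resp.\ $L_\alpha^\vee$-)reduction of the ambient $T$- (resp.\ $\Td$-)torsors, and therefore factor through the Picard stacks attached to $L_\alpha$ and $L_\alpha^\vee$, all essentially supported on the sub-cameral locus $\tcA_\alpha\subset\tcA$. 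This reduces the desired identity $\beta\circ\Phi_{\alpha^\vee,0}=\Psi_{\alpha^\vee,0}$ to the same identity for the rank-one group $L_\alpha$.

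The main obstacle will be executing the resulting rank-one computation. In that setting the cameral cover becomes an \'etale double cover of $X$ branched over $D_\alpha$, and the Picard stacks on both sides are comparatively concrete. One needs to compute the class of $s_{\alpha^\vee}$ in $H^1(\Ag,\VP)$ via the Abel-Jacobi map of this double cover, and match it with the Chern class of $\calQ(\alpha^\vee)$ in $H^1(\Ag,\VPd^*(1))$. The matching will come from the direct computation of the self-intersections of the graphs $\Gamma_e$ and $\Gamma_{s_\alpha}$ inside $\tcAg\times_{\Ag}\tcAg$, using the explicit formula of Lemma \ref{l:sectionlb}, together with the Poincar\'e duality isomorphism $\PD$ of (\ref{eq:PDcurve}) that was used to define $\beta$ in (\ref{eq:betaform}).
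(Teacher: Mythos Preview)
Your proposal does not reach a proof, and the reduction you outline is not the one the paper uses.

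The gap is twofold. First, the claimed factorization through the Levi $L_\alpha$ is not justified: by Lemma~\ref{l:sectionlb} the section $s_{\alpha^\vee}$ corresponds to the divisor $\sum_{w\in W}\jiao{w\alpha^\vee,\xi}\Gamma_w$, which involves the entire Weyl group $W$, not just $\jiao{r_\alpha}$; there is no evident map from $\calP$ to a rank-one Picard stack through which $s_{\alpha^\vee}$ factors. Second, even granting such a reduction, you explicitly leave the ``rank-one computation'' as an open obstacle, so the argument is incomplete.

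The paper's argument is both simpler and complete. Rather than pass to a Levi, it restricts to the wall divisor $\tcAg_\alpha\subset\tcAg$, the fixed locus of $r_\alpha$. Since $\tcAg_\alpha\to\Ag$ is finite surjective, the induced map $\homo{0}{\tcAg_\alpha/\Ag}\to\homo{0}{\tcAg/\Ag}=\Ql$ is onto, so it suffices to show $\Delta_{\alpha^\vee}$ vanishes after precomposition with this surjection. The paper then proves the stronger statement that \emph{both} $\Phi_{\alpha^\vee}$ and $\Psi_{\alpha^\vee}$ vanish on $\homo{*}{\tcAg_\alpha/\Ag}$ separately. For $\Phi_{\alpha^\vee}$: on $\tcAg_\alpha$ one has $\Gamma_w(\tcAg_\alpha)=\Gamma_{wr_\alpha}(\tcAg_\alpha)$, so the divisor of Lemma~\ref{l:sectionlb} pairs into coset sums $\jiao{w\alpha^\vee+wr_\alpha\alpha^\vee,\xi}=0$, hence $s_{\alpha^\vee}|_{\tcAg_\alpha}$ is trivial. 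For $\Psi_{\alpha^\vee}$: the Donagi--Gaitsgory description recalled in Lemma~\ref{l:Pisogeny} says precisely that $\calQ(\alpha^\vee)$ carries a canonical trivialization over $\tcA_\alpha\times_{\calA}\calPd$, so its Chern class vanishes there. This avoids any explicit Ext computation.
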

\begin{proof}
Let $\frt_{\alpha}$ be the wall corresponding to the simple root $\alpha$ in $\frt$. The Killing form $\frt\isom\frtd$ identifies $\frt_{\alpha}$ with $\frtd_{\alpha^\vee}$. Let $\tcAg_\alpha\subset\tcAg$ be the preimage of $\frt_{\alpha,D}$ under the evaluation morphism $\tcAg\to\frt_{D}$. In other words, $\tcAg$ is the fixed point locus of $r_\alpha$ (the reflection in $W$ corresponding to $\alpha$) on $\tcAg$. The morphism $\tcAg_\alpha\to\Ag$ is finite of degree $\deg(D)\#W$ (in particular it is surjective), it induces a surjection $\homo{0}{\tcAg_\alpha/\Ag}\to\homo{0}{\tcAg/\Ag}=\Ql$. Therefore, in order to show that $\Delta_{\alpha^\vee}=0$, it suffices to show that composition
\begin{equation*}
\homo{0}{\tcAg_\alpha/\Ag}\to\homo{0}{\tcAg/\Ag}\xrightarrow{\Delta_{\alpha^\vee}}\VPd^*[1](1)
\end{equation*}
is zero. This composition is given by the difference of the restrictions of $\beta\circ\Phi_{\alpha^\vee}$ and $\Psi_{\alpha^\vee}$ on $\homo{*}{\tcAg_\alpha/\Ag}$. We claim a much stronger vanishing, namely both maps
\begin{eqnarray}\label{eq:vanSp}
s_{\alpha^\vee}:\homo{*}{\tcAg_\alpha/\Ag}\to\homo{*}{\calP/\Ag};\\
\label{eq:vanCh}
c_1(\calQ(\alpha^\vee))^\natural:\homo{*}{\tcAg_\alpha/\Ag}\to\coho{*}{\calPd/\Ag}[2](1)
\end{eqnarray}
are zero.

We first prove the vanishing of (\ref{eq:vanSp}). For this, it suffices to show that the morphism
\begin{equation}\label{eq:toPicW}
\tcAg_\alpha\xrightarrow{s_{\alpha^\vee}}\calP\xrightarrow{\jmath_{\calP}}\stPic_{T}(\tcA/\calA)^W
\end{equation}
is trivial (i.e., factors through the identity element). In other words, the line bundles $\calQ_{\alpha^\vee}(\xi)$ are canonically trivialized on $\tcAg_\alpha\times_{\calA}\tcA$. By Lem. \ref{l:sectionlb}, we have
\begin{equation*}
\calQ_{\alpha^\vee}(\xi)|_{\tcAg_\alpha\times_{\calA}\tcA}\cong\calO\left(\sum_{w\in W}\jiao{w\alpha^\vee,\xi}\Gamma_w(\tcAg_\alpha)\right).
\end{equation*}
where $\Gamma_w(\tcAg_{\alpha})$ is the restriction of the graph $w$ to $\tcAg_\alpha\times_{\calA}\tcA$. Since $r_\alpha$ fixes $\tcAg_\alpha$, we have
\begin{equation*}
\Gamma_w(\tcAg_\alpha)=\Gamma_{wr_\alpha}(\tcAg_\alpha).
\end{equation*}
Therefore, we have an equality of divisors
\begin{eqnarray*}
\sum_{w\in W}\jiao{w\alpha^\vee,\xi}\Gamma_w(\tcAg_\alpha) &=& \sum_{w\in W/\jiao{r_\alpha}}\jiao{w\alpha^\vee+wr_\alpha\alpha^\vee,\xi}\Gamma_{w}(\tcAg_\alpha)\\
&=& \sum_{w\in W/\jiao{r_\alpha}}\jiao{w\alpha^\vee-w\alpha^\vee,\xi}\Gamma_{w}(\tcAg_\alpha)=0.
\end{eqnarray*}
Here $\sum_{w\in W/\jiao{r_\alpha}}$ means summing over the representatives of the cosets $W/\jiao{r_\alpha}$. Hence $\calQ_{\alpha^\vee}(\xi)|_{\tcAg_{\alpha}\times_{\calA}\tcA}$ is canonically trivialized, i.e., the map (\ref{eq:toPicW}) is zero.

We then prove the vanishing of (\ref{eq:vanCh}). For this, it suffices to show that the tautological line bundle $\calQ(\alpha^\vee)$ is trivial on $\tcA_\alpha\times_{\calA}\calPd$. But this follows from the description of $\calPd$ given in \cite[16.3]{DG}, as we recalled in the proof of Lem. \ref{l:Pisogeny}. Therefore the map (\ref{eq:vanCh}) is also zero. This proves the lemma.
\end{proof}

Tracing the above reductions backwards, we have already completed the proof of Th. \ref{th:LD}.


\section{A sample calculation}\label{s:SL2}
The goal of this section is to calculate the affine Weyl group action on the cohomology of the parabolic Hitchin fibers in the first nontrivial case. We will also partially verify the phenomenon of Langlands duality proved in Sec. \ref{s:LD}. In particular, we will see that the lattice action on the cohomology of parabolic Hitchin fibers is {\em not} semisimple in general.

\subsection{Description of parabolic Hitchin fibers}\label{ss:SL2Mpar}
Throughout this section, we specialize to the case $X=\PP^1$, $\calO_X(D)=\calO(2)$ and $G=\SL(2)$. Now $\AHit=\cohog{0}{\PP^1,\calO(4)}$ parametrizes degree 4 homogeneous polynomials $a(\xi,\eta)=\sum_{i=0}^{4}a_i\xi^i\eta^{4-i}$. For $a\in\AHit$, the cameral curve $X_a$ coincides with the spectral curve $Y_a$, which is a curve in the total space of $\calO(2)$ defined by the equation 
\begin{equation*}
t^2=a(\xi,\eta).
\end{equation*}
Let $p_a:Y_a\to X$ be the projection, which can be viewed as the GIT quotient under the involution $\tau:(\xi,\eta,t)\mapsto(\xi,\eta,-t)$ of $Y_a$. We have $a\in\Aa(k)$ if and only if $Y_a$ is irreducible.

\subsubsection{The Hitchin fibers} For $a\in\Ah(k)$, the Hitchin fiber $\MHit_a$ is:
\begin{equation*}
\MHit_a=\{(\calF,\alpha)|\calF\in\overline{\stPic}(Y_a),\alpha:\det(p_{a,*}\calF)\isom\calO_X\}.
\end{equation*}
For the stack $\overline{\stPic}(Y_a)$ see \cite[Example 3.1.10]{GSI}. For any $(\calF,\alpha)\in\MHit_a$, $\calE=p_{a,*}\calF$ is a rank 2 vector bundle on $X$ with trivial determinant, therefore $\chi(Y_a,\calF)=\chi(X,\calE)=2$, hence $\calF\in\overline{\stPic}^2(Y_a)$. Since
\begin{equation*}
\chi(Y_a,\calO_{Y_a})=\chi(X,\calO_X)+\chi(X,\calO_X(-2))=0,
\end{equation*}
we conclude that for $a\in\Aa(k)$, $Y_a$ is an irreducible curve of arithmetic genus 1. The degree -1 Abel-Jacobi map
\begin{eqnarray}\label{eq:AJ}
Y_a&\to&\overline{\Pic}^{-1}(Y_a)\\
\notag
y&\mapsto&\calI_y \textup{ (the ideal sheaf of }y)
\end{eqnarray}
is an isomorphism, here $\overline{\Pic}^{-1}(Y_a)$ is the compactified Picard scheme, the coarse moduli space of $\overline{\stPic}^{-1}(Y_a)$. Moreover, $\MHit_{a}\cong\overline{\Pic}^2(Y_a)/\mu_2$ where the center $\mu_2\subset\SL(2)$ acts trivially on $\overline{\Pic}^2(Y_a)$. Via the Abel-Jacobi map (\ref{eq:AJ}), $\MHit_a$ is non-canonically isomorphic to $Y_a\times\BB\mu_2$ (we have to choose an isomorphism $\overline{\Pic}^2(Y_a)\cong\overline{\Pic}^{-1}(Y_a)$, which is non-canonical).

The Picard stack $\calP_a$ acting on $\MHit_a$ is the Prym variety
\begin{equation*}
\stPic(Y_a)^{\tau=-1}=\{(\calL,\iota)|\calL\in\stPic^0(Y_a),\iota:\calL\isom\tau^*\calL^{\otimes-1} \textup{ such that }\iota=\tau^*(\iota^{\otimes-1})\}.
\end{equation*}
If $Y_a$ is an irreducible curve of arithmetic genus 1, then $\calP_a\cong\Pic^0(Y_a)\times\BB\mu_2$ (with the trivial action of $\mu_2$).

\subsubsection{The parabolic Hitchin fibers} The parabolic Hitchin fiber $\Mpar_{a,x}$ is
\begin{equation*}
\Mpar_{a,x}=\{(\calF_0,\calF_1,\alpha)|(\calF_0,\alpha)\in\MHit_a,\calF_0\supsetneqq\calF_1\supsetneqq\calF_0(-x)\}.
\end{equation*}
We have two forgetful morphisms:
\begin{eqnarray*}
\rho_0&:&\Mpar_{a,x}\to\overline{\stPic}^2(Y_a),\\
\rho_1&:&\Mpar_{a,x}\to\overline{\stPic}^1(Y_a)
\end{eqnarray*}
sending $(\calF_0,\calF_1,\alpha)$ to $\calF_0$ and $\calF_1$ respectively. As in the case of $\MHit_a$, $\Mpar_{a,x}$ is the quotient of its course moduli schemes by the trivial action of $\mu_2$.

For each partition $\underline{p}$ of 4, let $\calA_{\underline{p}}$ be the locus where the multiplicities of the roots of $a(\xi,\eta)=0$ are given by $\underline{p}$. We have
\begin{equation*}
\delta(a)=\sum_{i}[p_i/2], \textup{ if }a\in\calA_{\underline{p}},\underline{p}=(p_1,p_2,\cdots).
\end{equation*} 
For $x\in X$, let $v_a(x)$ be the order of vanishing of the polynomial $a$ at $x$. Then
\begin{equation*}
\delta(a,x)=[v_a(x)/2].
\end{equation*}
Note that $(\Aa\times X)^{\rs}$ corresponds to the condition $v_a(x)=0$ while $(\Aa\times X)_0$ corresponds to the condition $v_a(x)=0$ or 1. 

Let us analyze the anisotropic parabolic Hitchin fibers on each stratum:
\begin{itemize}
\item $\underline{p}=(1,1,1,1)$. Then $Y_a$ is a smooth curve of genus one; $\Mpar_{a,x}=\MHit_a\times p_a^{-1}(x)$ which is non-canonically isomorphic to $Y_a\times p_a^{-1}(x)\times\BB\mu_2$.

\item $\underline{p}=(2,1,1)$. Then $Y_a$ is a nodal curve of arithmetic genus 1. Let $\pi:\PP^1\to Y_a$ be the normalization. Then the node of $\MHit_a$ (recall the coarse moduli space of $\MHit_{a}$ is isomorphic to $Y_a$, which has a node) corresponds to $\calF=\pi_*\calO_{\PP^1}(1)$. If $v_a(x)=0$ or $1$, then $\Mpar_{a,x}=\MHit_a\times p_a^{-1}(x)$, which is isomorphic to $Y_a\times p_a^{-1}(x)\times\BB\mu_2$. If $v_a(x)=2$, i.e., $x$ is the projection of the node, then the reduced structure of $\Mpar_{a,x}$ (ignore the $\mu_2$-action as well) consists of two $\PP^1$'s meeting transversally at two points: one component (call it $C_1$) corresponds to $\calF_0=\pi_*\calO_{\PP^1}(1)$ and varying $\calF_1$; the other component (call it $C_0$) corresponds to $\calF_1=\pi_*\calO_{\PP^1}$ and varying $\calF_0$.

\item $\underline{p}=(3,1)$. Then $Y_a$ is a cuspidal curve of arithmetic genus 1. Let $\pi:\PP^1\to Y_a$ be the normalization. Then the cusp of $\MHit_a$ corresponds to $\pi_*\calO_{\PP^1}(1)$. If $v_a(x)=0$ or $1$, then $\Mpar_{a,x}=\MHit_a\times p_a^{-1}(x)$, which is isomorphic to $Y_a\times p_a^{-1}(x)\times\BB\mu_2$. If $v_a(x)=3$, i.e., $x$ is the projection of the cusp, then the reduced structure of $\Mpar_{a,x}$ (ignore the $\mu_2$-action as well) consists of two $\PP^1$'s tangent to each other at one point (to the first order): one component (call it $C_1$) has corresponds to $\calF_0=\pi_*\calO_{\PP^1}(1)$ and varying $\calF_1$; the other component (call it $C_0$) corresponds to $\calF_1=\pi_*\calO_{\PP^1}$ and varying $\calF_0$.
\end{itemize}

We also have two other types of spectral curves $Y_a$ which are not irreducible (hence $a\notin\Aa$):
\begin{itemize}
\item $\underline{p}=(2,2)$. Then $Y_a$ is the union of two $\PP^1$ meeting transversally at two points. The two components of $Y_a$ are permuted by the involution $\tau$.

\item $\underline{p}=(4)$. Then $Y_a$ consists of two $\PP^1$'s tangent to each other at one point to the first order. The two components of $Y_a$ are permuted by the involution $\tau$. 
\end{itemize}


\subsection{The $\tilW$-action for a subregular parabolic Hitchin fiber}\label{ss:matrixSL2}
The parabolic Hitchin fiber $\Mpar_{a,x}$ for $a\in\calA_{(2,1,1)}$ and $x$ the projection of the node of $Y_a$ is simplest example of a {\em subregular} parabolic Hitchin fiber (more results on this class of examples will appear in \cite{Subreg}). In this subsection, we compute the action of $\tilW$ on the cohomology of this subregular parabolic Hitchin fiber. To simplify the argument, we work over the ground field $k=\CC$. We ignore the Tate twists in this and the next section.

\subsubsection{A transversal slice} To reduce the dimensions, we will restrict to a transversal slice through $(a,x)$. Fix a point $\infty\in X=\PP^1$ and identify the complement $X-\{\infty\}$ with $\AA^1$. Consider the following map
\begin{eqnarray*}
\iota:\calB=\AA^2_{b,c}&\to&\AHit\times X\\
(b,c)&\mapsto&(x^3+x^2+bx+c,0)
\end{eqnarray*}

We will base change the situation from $\AHit\times X$ to $\calB$. For each $(b,c)\in\calB$, let $Y_{b,c}$ be the spectral curve corresponding to $\iota(b,c)$, and let $Y/\calB$ be the family of spectral curves over $\calB$.

Let us ignore the stack issue from now on because the finite automorphism group $\mu_2$ does not affect the $\Ql$-cohomology. Hence we will work with compactified Picard schemes rather than stacks. Because each spectral curve $Y_{b,c}$ has a unique point $\tilinf$ over the $\infty\in X$, we get a section $\tilinf:\calB\to Y$. We can use this section to identify the various components of $\cPic(Y/\calB)$. More precisely, we can think of $\cPic(Y/\calB)$ as classifying rank one torsion-free coherent sheaves on $Y$ with a rigidification along $\infty$. In particular, via the Abelian-Jacobi map (\ref{eq:AJ}), $\cPic^{-1}(Y/\calB)$ is canonically isomorphic to $Y$; using the section $\tilinf$, all the other components of $\cPic(Y/\calB)$ can also be identified with $Y$.

Let $M$ be the restriction of $\Mpar$ to $\calB$. For each $(b,c)\in\calB$, let $M_{b,c}$ denote the parabolic Hitchin fiber $\Mpar_{\iota(b,c)}$. Therefore we get two maps
\begin{equation*}
\xymatrix{& M\ar[dl]_{\rho_0}\ar[dr]^{\rho_1} &\\ Y\cong\cPic^2(Y/\calB) && Y\cong\cPic^1(Y/\calB)}
\end{equation*}

\subsubsection{Bases in (co)homology} Before calculating the action of $\tilW$ on the cohomology of $M_{0,0}$, we first pick good bases for $\homog{2}{M_{0,0}}$ and $\cohog{2}{M_{0,0}}$. Since the reduced structure of $M_{0,0}$ consists of two components $C_0$ and $C_1$, each isomorphic to $\PP^1$, we get a basis $\{[C_0],[C_1]\}$ for $\homog{2}{M_{0,0}}$. To get a dual basis in $\cohog{2}{M_{0,0}}$, we use the smooth ambient space $M$. Capping with the fundamental class $[M]$ induces an isomorphism
\begin{equation}\label{eq:capM}
\cap[M]:\cohog{2}{M}\isom\hBM{4}{M}.
\end{equation}
Composing the two morphism $\rho_0,\rho_1:M\to Y$ with the the projection $p:Y\to X$, we get two morphisms
\begin{equation*}
p\rho_0,p\rho_1:M\to Y\to X.
\end{equation*}
Fix a general point $x_0\in X-\{0,\infty\}$ such that the above morphisms are smooth over a Zariski neighborhood of $x_0$. For $i=0,1$, let $Z_i=(p\rho_i)^{-1}(x_0)$. Then $Z_i$ is a smooth closed subscheme of $M$. The fundamental classes $[Z_i]\in\hBM{4}{M}$ gives cohomology classes $\zeta_i\in\cohog{2}{M}$ via (\ref{eq:capM}). It is easy to check directly that: $Z_i$ is disjoint from $C_j$ if $i\neq j$; $Z_i$ intersects $C_i$ transversally in two points (corresponding to the two preimage of $x_0$ under $C_i\to Y_{0,0}\to X$). Let $v:M_{0,0}\hookrightarrow M$ be the inclusion, then
\begin{equation*}
\jiao{v^*\zeta_i,[C_j]}=\jiao{\zeta_i,v_*[C_j]}=[Z_i]\cdot[C_j]=2\delta_{i,j}.
\end{equation*}
In other words, $\{\frac{1}{2}v^*\zeta_0,\frac{1}{2}v^*\zeta_1\}$ is a dual basis to $\{[C_0],[C_1]\}$.

We now compute the action of the simple reflections $s_0,s_1$ of the affine Weyl group $\tilW$ of $\SL(2)$.

\subsubsection{Action of $s_1$} Let $H_{s_1}$ be the image of the Hecke correspondence $\calH_{s_1}|_{\calB}$ in $M\times_{\calB}M$. We view $H_{s_1}$ as a correspondence
\begin{equation*}
\xymatrix{& H_{s_1}\ar[dl]_{\overleftarrow{h}}\ar[dr]^{\overrightarrow{h}} \\
M\ar[dr]_{\rho_0} & & M\ar[dl]^{\rho_0}\\ & \cPic^2(Y/\calB)}
\end{equation*}

Since all the classes in $\cohog{0}{M_{0,0}}$ and $\cohog{1}{M_{0,0}}$ and the class $v^*\zeta_0\in\cohog{2}{M_{0,0}}$ are pulled back from the corresponding classes in $\cohog{*}{\cPic^2(Y_{0,0})}$ (i.e., the cohomology of the base of the correspondence $H_{s_1}$), these classes has to be fixed by the cohomological correspondence $[H_{s_1}]_\#$, i.e., $s_1$ acts as identity on $\cohog{0}{M_{0,0}}$, $\cohog{1}{M_{0,0}}$ and $v^*\zeta_0\in\hBM{4}{M}\cong\cohog{2}{M_{0,0}}$.

It remains to calculate the effect of $[H_{s_1}]_\#$ on $v^*\zeta_1$. It is easy to see that
\begin{lemma}
The reduced fiber of $H_{s_1}$ over $(0,0)\in\calB$ is $\Delta(C_0)\cup C_1\times C_1\subset(C_0\cup C_1)\times(C_0\cup C_1)$, where $\Delta(C_0)\subset C_0\times C_0$ is the diagonal.
\end{lemma}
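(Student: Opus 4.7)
The approach is a set-theoretic case analysis on the structure of the shared underlying sheaf $\calF_0$, split according to whether $\calF_0$ is locally free at the node $n\in Y_{0,0}$.

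Since the simple reflection $s_1\in W$ corresponds to Hecke modifications preserving the underlying $G$-torsor, $H_{s_1}\subset M\times_\calB M$ lies within the locus of pairs $((\calF_0,\calF_1),(\calF_0,\calF_1'))$ sharing a common $\calF_0$. The description $M_{0,0}=C_0\cup C_1$ lets us split into two cases. When $\calF_0$ corresponds to a point of $C_0$, i.e.\ $\calF_0$ is locally free at $n$, a local calculation in the affine model $t^2=x^2(x+1)$ shows $\calF_0/x\calF_0\cong k[t]/(t^2)$ with $\varphi|_{x=0}$ acting as multiplication by $t$---nilpotent but nonzero. Its unique stable line forces $\calF_1=\calF_1'$ to be the maximal-ideal submodule $m_n\calF_0$, which is locally isomorphic to $\pi_*\calO_{\PP^1}$, giving exactly $\Delta(C_0)$. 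When $\calF_0$ corresponds to $C_1$, i.e.\ $\calF_0=\pi_*\calO_{\PP^1}(1)$ has stalk $k[[u]]\oplus k[[v]]$ at $n$, the element $t\in\calO_{Y,n}$ lies in the ideal $(u,v)$ and so acts as zero on $\calF_0/x\calF_0\cong k\oplus k$; every line in the fiber yields a valid parabolic structure, so $\calF_1,\calF_1'$ range independently over $C_1$, contributing all of $C_1\times C_1$.

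For the reverse containment, I would combine the irreducibility of $\calH_{s_1}$ (being the closure of the irreducible $s_1$-graph) with a deformation argument over the two-dimensional base $\calB$: approaching $(0,0)$ along families in which $\calF_0^{(t)}$ degenerates toward $\pi_*\calO_{\PP^1}(1)$, the two $\varphi$-stable parabolic structures on $\calF_0^{(t)}$ decouple in the limit, and varying the two-parameter deformation direction sweeps out the full $C_1\times C_1$. The diagonal $\Delta(C_0)$ arises automatically from families keeping $\calF_0^{(t)}$ invertible. Reducedness of the fiber then follows since both $\Delta(C_0)$ and $C_1\times C_1$ are smooth.

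The main obstacle is the deformation argument verifying that the 2-dimensional product $C_1\times C_1$ genuinely lies in the closure rather than just a proper subvariety; this requires producing an explicit 2-parameter family of invertible $\calF_0^{(t)}$'s degenerating to $\pi_*\calO_{\PP^1}(1)$ and checking that the two limiting parabolic lines can be made to equal any prescribed pair in $C_1\times C_1$.
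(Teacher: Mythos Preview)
The paper supplies no proof: the lemma appears after the phrase ``It is easy to see that'' with no argument given.

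Your case analysis correctly computes the fiber of $M\times_{\rho_0}M$ over $(0,0)$ and hence gives the inclusion $H_{s_1}|_{(0,0)}^{\red}\subset\Delta(C_0)\cup C_1\times C_1$. One small imprecision: the equivalence ``point of $C_0$'' $\Leftrightarrow$ ``$\calF_0$ locally free at $n$'' fails at the two points $C_0\cap C_1$, where $\calF_0=\pi_*\calO_{\PP^1}(1)$; but those points already lie in $C_1$, so nothing is lost. Your closing remark about reducedness is unnecessary, since the lemma only concerns the reduced fiber.

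The genuine content is exactly where you place it: the reverse inclusion $C_1\times C_1\subset H_{s_1}|_{(0,0)}$. Your two-parameter deformation is the right instinct, and it can be completed cleanly as follows. After locally trivializing the rank-$2$ bundle $\calF_0/x\calF_0$, the assignment $\calF_0\mapsto\varphi|_{x=0}$ defines a map $\alpha:\cPic^2(Y/\calB)\to\mathfrak{sl}_2$ near the point $p=(0,0,\pi_*\calO_{\PP^1}(1))$, under which $\rho_0:M\to\cPic^2(Y/\calB)$ is the base change of the Grothendieck--Springer map $\widetilde{\mathfrak{sl}_2}\to\mathfrak{sl}_2$. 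Your own local computations show $\alpha^{-1}(0)=\{p\}$ set-theoretically: $\varphi|_{x=0}=0$ forces $c=0$, then nonzero-nilpotent unless $\calF_0$ is non-invertible over $x=0$, which in turn forces $b=0$ and $\calF_0=\pi_*\calO_{\PP^1}(1)$. Since source and target are smooth of the same dimension $3$, miracle flatness makes $\alpha$ flat, hence open, near $p$. Openness implies that base change along $\alpha$ takes dense subsets to dense subsets; applied to $Z_{s_1}^{\rs}\subset Z_{s_1}$ inside the Steinberg variety $\widetilde{\mathfrak{sl}_2}\times_{\mathfrak{sl}_2}\widetilde{\mathfrak{sl}_2}$, this gives $H_{s_1}=\cPic^2\times_{\mathfrak{sl}_2}Z_{s_1}$ locally. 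The classical fact $Z_{s_1}|_{0}=\PP^1\times\PP^1$ (any pair of Borels is a limit along regular elements of a nearby Cartan) then yields $C_1\times C_1\subset H_{s_1}|_{(0,0)}$, and $\Delta(C_0)$ is read off from the nonzero-nilpotent locus where $Z_{s_1}$ coincides with the diagonal.
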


Using this lemma, we see that the fiber of $\overrightarrow{h}^{-1}(Z_1)$ over $(0,0)\in\calB$ consists of two disjoint copies of $C_1$, namely $C_1\times(Z_1\cap C_1)$. Therefore $Z_1':=\overleftarrow{h}\overrightarrow{h}^{-1}(Z_1)$ intersects $C_0$ at two points (possibly with multiplicities). In any case, we must have
\begin{equation}\label{eq:zintc}
[Z_1']\cdot[C_0]\geq2.
\end{equation}

By construction, the action of $[H_{s_1}]_\#$ on $\cohog{2}{M}$ is
\begin{equation*}
[H_{s_1}]_\#:\cohog{2}{M}\xrightarrow{\overrightarrow{h}^*}\cohog{2}{H_{s_1}}\xrightarrow{[H_{s_1}]}\hBM{4}{H_{s_1}}\xrightarrow{\overleftarrow{h}_!}\hBM{4}{M}\xleftarrow{\cap[M]}\cohog{2}{M}.
\end{equation*}
Therefore, it sends $\zeta_1$ to the cohomology class dual to the fundamental class of $\overleftarrow{h}\overrightarrow{h}^{-1}(Z_1)$, i.e., $[Z'_1]$. Therefore, the inequality (\ref{eq:zintc}) implies that
\begin{equation*}
\jiao{s_1(v^*\zeta_1),[C_0]}=\jiao{v^*[H_{s_1}]_\#(\zeta_1),[C_0]}=\jiao{[H_{s_1}]_\#(\zeta_1),v_*[C_0]}\geq2.
\end{equation*}

We conclude that the matrix of the action of $s_1$ on $\cohog{2}{M_{0,0}}$ under the basis $\{\frac{1}{2}v^*\zeta_1,\frac{1}{2}v^*\zeta_2\}$ takes the form
\begin{equation*}
s_1=\left(\begin{array}{cc}1 & a \\0 & b\end{array}\right).
\end{equation*}
for some $a\geq1$. Since $s_1^2=\id$, we must have $b=-1$.

\subsubsection{Action of $s_0$} Next we consider the Hecke correspondence $H_{s_0}$:
\begin{equation*}
\xymatrix{& H_{s_0}\ar[dl]\ar[dr]\\
M\ar[dr]_{\rho_1} & & M\ar[dl]^{\rho_1}\\ & \cPic^1(Y/\calB)}
\end{equation*}
In fact there is an automorphism of $M$ over $\calB$ which interchanges the two morphisms $(\rho_0,\rho_1)$:
\begin{equation*}
(\calF\supset\calF')\mapsto(\calF'(\{\tilinf\})\supset\calF(-\{\tilinf\})).
\end{equation*}
Here the inclusion $\calF(-\{\tilinf\})\hookrightarrow\calF'(\{\tilinf\})$ is induced by
\begin{equation*}
\calF\subset\calF'\otimes p^*\calO_X(\{0\})\cong\calF'\otimes p^*\calO_X(\{\infty\})=\calF'(2\{\tilinf\}).
\end{equation*}
using a fixed an isomorphism $\calO_X(\{0\})\cong\calO_X(\{\infty\})$ in the second step. Therefore, the action of $s_0$ on $\cohog{*}{M_{0,0}}$ can be formally deduced from the $s_1$-action. The action of $s_0$ fixes $\cohog{0}{M_{0,0}}$, $\cohog{1}{M_{0,0}}$ and acts by the matrix
\begin{equation*}
s_0=\left(\begin{array}{cc}-1 & 0 \\a & 1\end{array}\right).
\end{equation*}
under the basis $\{\frac{1}{2}v^*\zeta_0,\frac{1}{2}v^*\zeta_1\}$ of $\cohog{2}{M_{0,0}}$.

\subsubsection{The lattice action} Therefore, the matrix of the action of the translation $\alpha^\vee=s_0s_1$ on $\cohog{2}{M_{0,0}}$ under the same basis takes the from:
\begin{equation*}
\alpha^\vee=\left(\begin{array}{cc}-1 & 0 \\a & 1\end{array}\right)\left(\begin{array}{cc}1 & a \\0 & -1\end{array}\right)=\left(\begin{array}{cc}-1 & -a \\a & a^2-1\end{array}\right).
\end{equation*}
Since the Picard stacks $\calP_a$ are all connected, there is no $\kappa$-part for the parabolic Hitchin complex $\tfQl$ for $\kappa\neq1$ by Lem. \ref{l:latticefinite}. Therefore, the action of $\alpha^\vee$ is unipotent, i.e., the trace of the above matrix must be $2$. This implies that $a^2=4$. Since $a>0$, we must have $a=2$.

In summary, $\tilW$ acts as identity on $\cohog{0}{M_{0,0}}$ and $\cohog{1}{M_{0,0}}$, and under the basis $\{\frac{1}{2}v^*\zeta_0,\frac{1}{2}v^*\zeta_1\}$ of $\cohog{2}{M_{0,0}}$, the elements $s_1,s_0$ and $\alpha^\vee$ act as matrices
\begin{equation}\label{eq:matrix}
s_1=\left(\begin{array}{cc}1 & 2 \\0 & -1\end{array}\right);s_0=\left(\begin{array}{cc}-1 & 0 \\2 & 1\end{array}\right);\alpha^\vee=\left(\begin{array}{cc}-1 & -2 \\2 & 3\end{array}\right).
\end{equation}
In particular, the action of the lattice part of $\tilW$ is unipotent, but not the identity. Also, from the matrices in (\ref{eq:matrix}), we see that $v^*(\zeta_0-\zeta_1)$ is an eigenvector of for the $s_1,s_0$ and $\alpha^\vee$-action with eigenvalue $-1$,$-1$ and $1$.

\subsection{Verification of Theorem \ref{th:LD} in a special case}\label{ss:pervSL2}

In this subsection, we partially verify Th. \ref{th:LD} on the example we calculated in the last section. More precisely, we take $G=\SL(2)$, $\Gd=\PGL(2)$. Let $\tcB=\calB\times_{\AHit\times X}\tcA$ be the restriction of the double cover $\tcA\to\AHit\times X$ to $\calB$. Then there is a unique point in $\tcB$ over $(0,0)\in\calB$, which still denote by $(0,0)$. We will check the commutativity (up to scalar) of the diagram (\ref{d:LD}) after restricting the diagram to the stalk of $(0,0)\in\tcB\subset\tcA$.

\subsubsection{The perverse cohomology} Let $\Delta\subset\calB$ be the locus where the discriminant of $x^3+x^2+bx+c$ vanishes, and let $\tilDel\subset\tcB$ be its preimage. Let $j$ be the open inclusion $\tcB-\tilDel\hookrightarrow\tcB$. Let $i_{0,0}:\{(0,0)\}\hookrightarrow\calB$ be the inclusion. The same argument for the Support Theorem \ref{th:supp} applies to the fibration $M\to\calB$, and gives the following decomposition (again we ignore Tate twists)
\begin{equation*}
\coho{*}{M/\tcB}=\Ql\oplus j_{!*}L[-1]\oplus\Ql[-2].
\end{equation*}
where $L$ is a rank 2 local system on $\tcB-\tilDel$ (we abuse the notation $j_{!*}$ to denote the middle extension of a shifted perverse sheaf). Along $\tilDel-\{(0,0)\}$, the stalks of $j_{!*}L$ are one dimension (concentrated at degree 0) corresponding to $\cohog{1}{\MHit_{\iota(b,c)}}$ when $Y_{b,c}$ is a nodal curve.

The fiber of $M$ over $(0,0)\in\tcB$ is the reduced structure of the fiber over $(0,0)\in\calB$. We use the notation $M_{0,0}$ to mean either of them, as long as we care only about the topology of them. Since $\cohog{2}{M_{0,0}}$ is two dimensional, and $\Ql[-2]$ only contributes one dimension to it, we must have $\dim\cohog{1}{i^*_{0,0}j_{!*}L}=1$. In other words, one dimension of $\cohog{2}{M_{0,0}}$ comes from a lower piece of the perverse filtration of $\coho{*}{M/\tcB}$. Since the action of $\alpha^\vee$ preserves the perverse filtration, $\cohog{1}{i^*_{0,0}j_{!*}L}\subset\cohog{2}{M_{0,0}}$ is invariant under $\alpha^\vee$. Since $v^*(\zeta_0-\zeta_1)$ spans the unique eigenspace of $\alpha^\vee$, we conclude that
\begin{equation}\label{eq:H1j}
\cohog{1}{i^*_{0,0}j_{!*}L}=\Ql\cdot v^*(\zeta_0-\zeta_1)\subset\cohog{2}{M_{0,0}}.
\end{equation}
By the last matrix in (\ref{eq:matrix}), the action of $\alpha^\vee-\id$ on $\cohog{2}{M_{0,0}}$ induces an isomorphism
\begin{equation}\label{eq:uni}
\alpha^\vee-\id:\coho{2}{i^*_{0,0}\Ql[-2]}\cong\cohog{2}{M_{0,0}}/\cohog{1}{i^*_{0,0}j_{!*}L}\isom\cohog{2}{i^*_{0,0}j_{!*}L[-1]}.
\end{equation}
This is the stalk at $(0,0)$ of the ``subdiagonal'' Springer action (see (\ref{eq:defSp})):
\begin{equation*}
\Sp^2(\alpha^\vee):\Ql\to j_{!*}L[1].
\end{equation*}
Here we change the degree labeling from the perverse degree to the usual cohomological degree.

\subsubsection{The dual parabolic Hitchin fiber} To check the result of Th. \ref{th:LD}, we also need to consider the parabolic Hitchin fiber for $\PGL(2)$. Let $\Md/\calB$ be the restriction of $\Mpar_{\Gd}$ to $\calB$. For each $(b,c,\tilx)\in\calB$ (where $\tilx\in Y_{b,c}$ is a point over $\{0\}$), $\Md_{b,c,\tilx}$ classifies pairs $(\calF\supset\calF')$ up to tensoring line bundles from $X$. Here $\calF,\calF'$ are torsion-free rank one coherent sheaves of $Y_{b,c}$, such that $\calF/\calF'$ is of length 1 supported $\tilx\in Y_{b,c}$. Then $\Md$ is the disjoint union of $\Mde$ and $\Mdo$ according to the parity of the degree of $\calF$. Moreover, $\Mde$ is canonically isomorphic to $M$ over $\tcB$. Tensoring with $\calO_{Y}(\tilinf)$ identifies the two components $\Mde$ and $\Mdo$. Under these identifications,
\begin{equation*}
\cohog{*}{\Md/\tcB}_{\st}\subset\cohog{*}{\Mde/\tcB}\oplus\cohog{*}{\Mdo/\tcB}\cong\cohog{*}{M/\tcB}^{\oplus2}
\end{equation*}
is the diagonal. In particular, both projections give the same isomorphism
\begin{equation*}
\cohog{*}{\Md/\tcB}_{\st}\cong\cohog{*}{M/\tcB}_{\st}=\cohog{*}{M/\tcB}.
\end{equation*}

Let $\Md_{0,0}$ denote the fiber of $\Md$ over $(0,0)\in\tcB$. Then it consists of two connected components $\Mde_{0,0}$ and $\Mdo_{0,0}$, each of which is identified with $M_{0,0}$ as above. In particular, we can talk about the $C_0$ and $C_1$ components of $\Mde_{0,0}$ and $\Mdo_{0,0}$.

The line bundle $\calL_{-\alpha^\vee}$ on $M$ corresponding to the root $-\alpha^\vee\in\xch(\Td)$ assigns to each $(\calF\supset\calF')$ the line $\calF/\calF'$. On the $C_0$ component of either $\Mde_{0,0}$ or $\Mdo_{0,0}$, $\calF'$ is fixed, hence $\calL_{-\alpha^\vee}|_{C_0}\cong\calO_{C_0}(-1)$. On the $C_1$ component of either $\Mde_{0,0}$ or $\Mdo_{0,0}$, $\calF$ is fixed, hence $\calL_{-\alpha^\vee}|_{C_1}\cong\calO_{C_1}(1)$. Here we identify $C_0$ and $C_1$ with $\PP^1$. Therefore
\begin{equation}\label{eq:chSL2}
c_1(\calL_{-\alpha^\vee})=\frac{1}{2}v^*(\zeta_1-\zeta_0)\in\cohog{2}{\Md_{0,0}}_{\st}\cong\cohog{2}{M_{0,0}}.
\end{equation}

\subsubsection{The verification} Finally, we check the commutative diagram (\ref{d:LD}) at the point $(0,0)\in\tcB$ up to scalar. The only nontrivial degree of the outer square of the diagram (\ref{d:LD}) in this case reads (after shifting and twisting):
\begin{equation*}
\xymatrix{\Ql\ar[rr]^{\Sp^2(\alpha^\vee)}\ar[d]^{\wr} && j_{!*}L[1]\ar[d]^{\wr}\\
\Ql\ar[rr]^{\Ch^0(-\alpha^\vee)} && j_{!*}L[1]}
\end{equation*}
Restricting to $(0,0)\in\tcB$, by (\ref{eq:uni}) and (\ref{eq:chSL2}), both arrows take the form
\begin{equation*}
i^*_{0,0}\Ql\to i_{0,0}^*j_{!*}L[1]
\end{equation*}
which is given by (up to scalar) the class $v^*(\zeta_1-\zeta_0)\in\cohog{1}{i^*_{0,0}j_{!*}L}$. This verifies the stalk of the diagram (\ref{d:LD}) at $(0,0)\in\tcB$ up to scalar.

\appendix


\section{The endoscopic correspondences (following B-C.Ng\^o)}\label{s:endocorr}
This appendix is based on unpublished work of Ng\^o. We construct and study various correspondences between (parabolic) Hitchin moduli stacks for $G$ and its endoscopic group $H$. The results here are used to prove Th. \ref{th:endo} in Sec. \ref{ss:pfendo}.

Throughout the appendix, we fix a rigidified endoscopic datum $(\kappa,\rho)$, hence the endoscopic group scheme $H$.

\subsection{The endoscopic correspondence}

We first construct a correspondence between $\MHit_H$ and $\calA_H\times_{\calA}\MHit$.

\begin{cons}\label{cons:endocorr}
Using the global Kostant sections of $\MHit$ and $\MHit_{H}$ (which we fixed once and for all in Th. \ref{th:endo}), we get identifications
\begin{equation}\label{eq:trivMHit}
\MHitreg\cong\calP;\hspace{1cm}\MHitreg_H\cong\calP_H.
\end{equation}

Recall $\mu_{H}$ is the morphism $\calA_{H}\to\calA$. By \cite[4.15.2]{NgoFL}, there is an exact sequence of Picard stacks over $\calA_{H}$:
\begin{equation}\label{eq:exactcalP}
1\to\calR^G_{H}\to\mu_H^*\calP\xrightarrow{h_{\calP}}\calP_{H}\to1
\end{equation}
where the kernel $\calR^G_{H}$ is a commutative affine group scheme over $\calA_{H}$. Using the identifications (\ref{eq:trivMHit}), $h_{\calP}$ induces a morphism
\begin{equation*}
h_{\calM}:\calA_H\times_{\calA}\MHitreg\cong\mu_H^*\calP\xrightarrow{h_{\calP}}\calP_H\cong\MHitreg_H.
\end{equation*}
The graph $\Gamma(h_{\calM})$ of $h_{\calM}$ is a closed substack of $\MHitreg_H\times_{\calA_H}(\calA_H\times_{\calA}\MHitreg)$. Let $\calC_H$ be its closure in $\MHit_H\times_{\calA_H}(\calA_H\times_{\calA}\MHit)$, viewed as a correspondence
\begin{equation}\label{d:endocorr}
\xymatrix{&\calC_H\ar[dl]_{\overleftarrow{c_H}}\ar[dr]^{\overrightarrow{c_H}} &\\
\MHit_{H}\ar[dr]_{\fHit_{H}} & & \calA_H\times_{\calA}\MHit\ar[dl]^{\id\times\fHit}\\
& \calA_H}.
\end{equation}
\end{cons}

\begin{defn}\label{def:endocorr}
The correspondence $\calC_H$ between $\MHit_H$ and $\calA_H\times_{\calA}\MHit$ over $\calA_H$ is called the {\em endoscopic correspondence} associated to the pair $(G,H)$.
\end{defn}

We can similarly define a parabolic version of the endoscopic correspondence. Let
\begin{eqnarray*}
\tilf_\Theta:\Mpar_\Theta:=\Mpar\times_XX_\Theta\to\tcA_\Theta;\\
\tilf_{H,\Theta}:\Mpar_{H,\Theta}:=\Mpar_{H}\times_XX_\Theta\to\tcA_{H,\Theta}
\end{eqnarray*}
be the base changes of $\tilf$ and $\tilf_H$.

\begin{cons}\label{cons:parcorr} Recall the regular loci $\Mparreg\subset\Mpar$ and $\Mparreg_{H}\subset\Mpar_H$ from \cite[Lem. 3.2.7]{GSI}. Let $\Mparreg_\Theta$ and $\Mparreg_{H,\Theta}$ be their base changes from $X$ to $X_\Theta$. Then (\ref{eq:trivMHit}) gives isomorphisms
\begin{equation}\label{eq:trivMrho}
\Mparreg_\Theta\cong\tcA_{\Theta}\times_{\calA}\calP;\hspace{1cm}\Mparreg_{H,\Theta}\cong\tcA_{H,\Theta}\times_{\calA_H}\calP_H.
\end{equation}
Using these isomorphisms, the morphism $h_{\calP}$ in (\ref{eq:exactcalP}) induces a morphism over $\tcA_{H,\Theta}$:
\begin{equation*}
\tilh_{\Theta}:\tcA_{H,\Theta}\times_{\tcA_\Theta}\Mparreg_{\Theta}\to\Mparreg_{H,\Theta}.
\end{equation*}
Let $\Gamma(\tilh_{\Theta})$ be the graph of $\tilh_{\Theta}$, which is a closed substack of 
$\Mparreg_{H,\Theta}\times_{\tcA_{H,\Theta}}(\tcA_{H,\Theta}\times_{\tcA_\Theta}\Mparreg_{\Theta})=\Mparreg_{H,\Theta}\times_{\tcA_\Theta}\Mparreg_\Theta$. Let $\tcC_{H,\Theta}$ be the closure of $\Gamma(\tilh_{\Theta})$ in $\Mpar_{H,\Theta}\times_{\tcA_\Theta}\Mpar_\Theta$, viewed as a correspondence
\begin{equation*}
\xymatrix{&\tcC_{H,\Theta}\ar[dl]_{\overleftarrow{c_{H,\Theta}}}\ar[dr]^{\overrightarrow{c_{H,\Theta}}} &\\
\Mpar_{H,\Theta}\ar[dr]_{\tilmu_{H,\Theta}\circ\tilf_{H,\Theta}} & & \Mpar_\Theta\ar[dl]^{\tilf_{\Theta}}\\
& \tcA_{\Theta}}
\end{equation*}
\end{cons}

\begin{defn}\label{def:parendo} The correspondence $\tcC_{H,\Theta}$ between $\Mpar_{H,\Theta}$ and $\Mpar_\Theta$ over $\tcA_\Theta$ is called the {\em parabolic endoscopic correspondence} associated to $(G,H)$. In fact, $\tcC_{H,\Theta}$ lies over $\tcA_{\kappa,\Theta}\subset\tcA_{\Theta}$.
\end{defn}

From the above constructions, it is easy to verify:
\begin{lemma}\label{l:tcCUisC}
There is an isomorphism of correspondences between $\Mparrs_{H,\Theta}$ and $\Mparrs_{\Theta}$ over $\tcArs_{H,\Theta}$:
\begin{equation*}
\tcArs_{H,\Theta}\times_{\tcA_\Theta}\tcC_{H,\Theta}\cong\tcArs_{H,\Theta}\times_{\calA_H}\calC_H.
\end{equation*}
\end{lemma}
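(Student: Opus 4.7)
The plan is to reduce both sides of the claimed isomorphism to the same object by exploiting that over the regular semisimple locus the closures taken in Constructions~\ref{cons:endocorr} and~\ref{cons:parcorr} do not add new points.

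First I would observe that on $\tcArs_{H,\Theta}$ one has $\delta_H=0$, so by \cite[Lem. 3.5.4]{GSI} the inclusion $\Mparreg_{H,\Theta}\hookrightarrow\Mpar_{H,\Theta}$ becomes an isomorphism after pullback to $\tcArs_{H,\Theta}$. By Lem.~\ref{l:AHdisj}(2) the morphism $\tilmu_{H,\Theta}$ carries $\tcArs_{H,\Theta}$ into $\tcArs_{\kappa,\Theta}\subset\tcArs_\Theta$, so the same reasoning shows that $\Mparreg_\Theta$ agrees with $\Mpar_\Theta$ over the image $\tilmu_{H,\Theta}(\tcArs_{H,\Theta})$. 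Consequently the graphs $\Gamma(\tilh_\Theta)$ (defined on $\tcA_{H,\Theta}\times_{\tcA_\Theta}\Mparreg_\Theta$) and $\Gamma(h_\calM)$ (defined on $\calA_H\times_\calA\MHitreg$) are already closed in their respective ambient fiber products after base change to $\tcArs_{H,\Theta}$, so the closure and the graph agree there. Explicitly,
\begin{align*}
\tcArs_{H,\Theta}\times_{\tcA_\Theta}\tcC_{H,\Theta} &= \tcArs_{H,\Theta}\times_{\tcA_\Theta}\Gamma(\tilh_\Theta),\\
\tcArs_{H,\Theta}\times_{\calA_H}\calC_H &= \tcArs_{H,\Theta}\times_{\calA_H}\Gamma(h_\calM).
\end{align*}

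Second, I would identify these two graphs. Under the Kostant-section isomorphisms (\ref{eq:trivMHit}) and (\ref{eq:trivMrho}), both $h_\calM$ and $\tilh_\Theta$ are induced from the same homomorphism $h_\calP:\mu_H^*\calP\to\calP_H$ appearing in (\ref{eq:exactcalP}). Because the same global Kostant sections are used in both constructions, and the forgetful projections $\Mparreg_\Theta\to\MHitreg|_{\tcA_\Theta}$ and $\Mparreg_{H,\Theta}\to\MHitreg_H|_{\tcA_{H,\Theta}}$ intertwine the parabolic trivialization with the Hitchin trivialization, the pullback of $\Gamma(h_\calM)$ along $\tcArs_{H,\Theta}\to\calA_H$ matches the pullback of $\Gamma(\tilh_\Theta)$ along $\tcArs_{H,\Theta}\hookrightarrow\tcA_{H,\Theta}$. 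Combining with the first step yields the claimed isomorphism of correspondences.

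The only mild technical point, and the place I expect the most (still modest) bookkeeping, is verifying that the parabolic Kostant-section isomorphism $\Mparreg_\Theta\cong\tcA_\Theta\times_\calA\calP$ is compatible with the Hitchin Kostant-section isomorphism $\MHitreg\cong\calP$ under the forgetful projection. This follows directly from the construction: the parabolic trivialization is obtained by base-changing the Hitchin trivialization along $\tcA_\Theta\to\calA$ (precisely the passage from (\ref{eq:trivMHit}) to (\ref{eq:trivMrho})), so no new data intervenes and both constructions of the correspondence are genuinely built from the single datum $h_\calP$.
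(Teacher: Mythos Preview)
There is a genuine gap in your first step. You claim that over $\tcArs_{H,\Theta}$ the inclusion $\Mparreg_{H,\Theta}\hookrightarrow\Mpar_{H,\Theta}$ is an isomorphism, but this is false. The regular semisimple condition is \emph{local} at the marked point $x$: it says $\delta_H(a_H,x)=0$, and \cite[Lem.~3.5.4]{GSI} then identifies $\Mpar_{H,\Theta}|_{\tcArs_{H,\Theta}}$ with $\tcArs_{H,\Theta}\times_{\calA_H}\MHit_H$. But $\Mparreg_{H,\Theta}$ is the preimage of $\MHitreg_H$ under the forgetful map, so under that identification it corresponds to $\tcArs_{H,\Theta}\times_{\calA_H}\MHitreg_H$, which is strictly smaller whenever the \emph{global} invariant $\delta_H(a_H)>0$. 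Since $\tcArs_{H,\Theta}\to\calA_H$ is surjective, such $a_H$ certainly occur. Your displayed equalities are therefore false: both $\calC_H$ and $\tcC_{H,\Theta}$ genuinely contain non-regular points even over the rs locus.

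Your second step---identifying the two graphs via $h_{\calP}$---is correct and is the real content. The fix is to use \cite[Lem.~3.5.4]{GSI} on the \emph{ambient} rather than the graph: over $\tcArs_{H,\Theta}$ the fiber product $\Mpar_{H,\Theta}\times_{\tcA_\Theta}\Mpar_\Theta$ is identified with $\tcArs_{H,\Theta}\times_{\calA_H}\bigl(\MHit_H\times_{\calA_H}(\calA_H\times_\calA\MHit)\bigr)$, the pullback of the ambient of $\calC_H$. Under this identification the restricted graph $\Gamma(\tilh_\Theta)|_{\tcArs_{H,\Theta}}$ is the pullback of $\Gamma(h_\calM)$, and since $\tcArs_{H,\Theta}\to\calA_H$ is smooth (hence flat), scheme-theoretic closure commutes with this base change; combined with the density of $\tcArs_{H,\Theta}$ in $\tcA_{H,\Theta}$ this matches the two closures. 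This is the routine check the paper alludes to when it says the lemma is ``easy to verify'' from Constructions~\ref{cons:endocorr} and~\ref{cons:parcorr}.
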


\subsection{The modular endoscopic correspondence}
In his unpublished work, Ng\^o also suggested another correspondence between $\MHit_{H}$ and $\MHit$ over $\calA$, which has a modular interpretation. Let us recall his construction. In the following discussion, we first work without the presence of the curve $X$. Therefore, we view $G,H$ as group schemes over $\BB\Theta$, with the $G$ the constant group and $H=H^{\textup{sp}}/\Theta$ given by the action of $\Theta$ on the split group $H^{\textup{sp}}$ via $\Theta\xrightarrow{\rho}\pi_0(\kappa)\to\Out(H^{\textup{sp}})$. All the stacks in the following Construction are over $\BB\Theta$.

\begin{cons}\label{cons:modcorr} Using the Kostant sections $\frc\to\frg^{\reg}$ and $\frc_H\to\frh^{\reg}$, we get isomorphisms
\begin{equation}\label{eq:Kos}
[\frg^{\reg}/G]\cong[\frc/J];\hspace{1cm}[\frh^{\reg}/H]\cong[\frc_H/J_H]
\end{equation}
where $J\to\frc$ and $J_H\to\frc_H$ are the regular centralizer group schemes for $G$ and $H$. Let $\mu_{\frc}:\frc_H\to\frc$ be the natural morphism. According to \cite[Prop. 2.5.1]{NgoFL} we have a homomorphism of group schemes over $\frc_H$
\begin{equation*}
h_J:\mu_{\frc}^*J\to J_H,
\end{equation*}
and hence a morphism
\begin{equation*}
\BB h_J:[\frc_H/\mu_{\frc}^*J]\to[\frc_H/J_H]. 
\end{equation*}
Using the identifications (\ref{eq:Kos}), we get a morphism
\begin{equation}\label{eq:hstack}
h:\frc_H\times_{\frc}[\frg^{\reg}/G]\to[\frh^{\reg}/H].
\end{equation}
Let
\begin{equation*}
\Gamma(h)=\frc_H\times_{\frc}[\frg^{\reg}/G]\xrightarrow{(h,\id)}[\frh^{\reg}/H]\times_{\frc_H}(\frc_H\times_{\frc}[\frg^{\reg}/G])=[\frh^{\reg}/H]\times_{\frc}[\frg^{\reg}/G]
\end{equation*}
be the graph of the morphism $h$. We can write $\Gamma(h)\cong[\frr^{\reg}/H\times G]$ where $\frr^{\reg}$ is quasi-affine over $\BB\Theta$ with an $H\times G$-action and a natural $H\times G$-equivariant morphism $\frr^{\reg}\to\frh^{\reg}\times_{\frc}\frg^{\reg}$. Let $\frr$ be the normalization of the affine closure of $\frr^{\reg}$. Therefore $\frr$ is an affine scheme over $\BB\Theta$ with a natural $H\times G$-equivariant morphism $\frr\to\frh\times_{\frc}\frg$. Hence, we can view $[\frr/H\times G]$ as a correspondence:
\begin{equation}\label{eq:corrfrr}
\xymatrix{&[\frr/H\times G]\ar[dl]_{\overleftarrow{r}}\ar[dr]^{\overrightarrow{r}} &\\
[\frh/H]\ar[dr]_{\chi_H} & & \frc_H\times_{\frc}[\frg/G]\ar[dl]^{\id\times\chi}\\
& \frc_H}
\end{equation}
\end{cons}

We can keep track of the $\GG_m$-action on $\frg$ and $\frh$ by homotheties in the above construction, so that $\frr$ also admits a natural $\GG_m$-action.

\begin{defn}\label{def:modendo}
The {\em modular endoscopic correspondence} $\Cmod_H$ is the stack which classifies pairs $(\psi,\iota)$ where $\psi:X\to[\frr/H\times G\times\GG_m]$ is a morphism and $\iota$ is a 2-isomorphism making the following diagram commutative:
\begin{equation*}
\xymatrix{& \ar@{}[dr]_{\Downarrow\iota} & [\frr/H\times G\times\GG_m]\ar[d]\\
X\ar[urr]^{\psi}\ar[rr]_{(X_\Theta,\rho_D)} & & \BB\Theta\times\BB\GG_m}
\end{equation*}
\end{defn}

According to \cite[Sec. 2]{NgoFib}, we also have moduli interpretations of $\MHit$ and $\MHit_H$ in the same style as Def. \ref{def:modendo}. For example, $\MHit_H$ classifies pairs $(\psi_H,\iota_H)$ where $\psi_H:X\to[\frh/H\times\GG_m]$ and $\iota_H$ is a 2-isomorphism between the morphism $X\xrightarrow{\psi_H}[\frh/H\times\GG_m]\to\BB\Theta\times\GG_m$ and the classifying morphism of the $\Theta\times\GG_m$-torsor $X_\Theta\times_X\rho_D$.

Using these moduli interpretations and diagram (\ref{eq:corrfrr}), $\Cmod_H$ can be viewed as a correspondence:
\begin{equation*}
\xymatrix{&\Cmod_H\ar[dl]_{\overleftarrow{m}}\ar[dr]^{\overrightarrow{m}} &\\
\MHit_{H}\ar[dr]_{\fHit_{H}} & & \calA_H\times_{\calA}\MHit\ar[dl]^{\id\times\fHit}\\
& \calA_H}
\end{equation*} 

\begin{lemma}[B-C.Ng\^o, unpublished]\label{l:Cmodproper}
The stack $\Cmod_H$ is an algebraic stack; the morphism $f^{\Hit}_H\circ\overleftarrow{m}=(\id\times f^{\Hit})\circ\overrightarrow{m}:\Cmod_H\to\calA_H$ satisfies the existence part of the valuative criterion, up to a finite extension. More precisely, for any complete discrete valuation ring $R$ containing $k$ with field of fractions $K$, and any commutative diagram
\begin{equation*}
\xymatrix{\Spec K\ar[d]\ar[r]^{c} & \Cmod_H\ar[d]\\
\Spec R\ar[r]^{a} & \calA_H}
\end{equation*}
there exists a finite separable extension $K'$ of $K$, with valuation ring $R'$, and a dotted arrow making the following diagram commutative
\begin{equation*}
\xymatrix{\Spec K'\ar[d]\ar[r] & \Spec K\ar[d]\ar[r]^{c} & \Cmod_H\ar[d]\\
\Spec R'\ar[r]\ar@{-->}[urr] & \Spec R\ar[r]^{a} & \calA_H}
\end{equation*}
\end{lemma}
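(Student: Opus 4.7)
\emph{Algebraicity of $\Cmod_H$.} My plan is to realize $\Cmod_H$ as a mapping stack with constraint: by Definition \ref{def:modendo} it classifies morphisms $X\to[\frr/H\times G\times\GG_m]$ compatible with the fixed $\Theta\times\GG_m$-structure on $X$. Since $X$ is proper and flat of relative dimension one, $\frr$ is a normal affine scheme, and $H\times G\times\GG_m$ has affine stabilizers, standard algebraicity results for Hom-stacks apply. Alternatively, the natural map $\frr\to\frh\times_\frc\frg$ induces the morphism
\begin{equation*}
\overleftrightarrow{m}:\Cmod_H\to\MHit_H\times_{\calA_H}(\calA_H\times_\calA\MHit)
\end{equation*}
used in the proof of Lem.~\ref{l:endoG2}, which is representable by relative affine schemes (the $\frr$-bundle above the paired Higgs data), reducing algebraicity to that of the target fibre product of known Deligne--Mumford stacks.

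\emph{Setting up the extension problem.} Given the commutative square of the lemma, my first step is to reduce to extending a section of an affine bundle. Projecting $c:\Spec K\to\Cmod_H$ via $\overleftrightarrow{m}$ gives $\eta_K\in\MHit_H(K)$ lying over $a|_K$ and $\xi_K\in\MHit(K)$ lying over $\mu_H\circ a|_K$. Properness of the Hitchin fibrations $f^{\Hit}_H:\MHit_H\to\calA_H$ and $f^{\Hit}:\MHit\to\calA$ (over the anisotropic loci, which contain the restricted bases we work with) produces, after a finite separable extension $R'/R$ with fraction field $K'$, extensions $\eta_{R'}\in\MHit_H(R')$ and $\xi_{R'}\in\MHit(R')$; the finite separable extension is unavoidable because the Hitchin stacks are Deligne--Mumford rather than schemes. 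We thus obtain compatible $H$- and $G$-Higgs bundles on $X\times\Spec R'$, and extending $c$ reduces to extending the given section $s_{K'}$ of the associated $\frr$-bundle $\calS\to X\times\Spec R'$ from $X\times\Spec K'$ to $X\times\Spec R'$.

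\emph{Main obstacle: extending the section.} Over the open dense locus $U\subset X\times\Spec R'$ where the $G$-Higgs field is regular, the section is forced into the sub-bundle $\calS^{\reg}$ built from $\frr^{\reg}$; since $[\frr^{\reg}/H\times G]=\Gamma(h)$ is the graph of the morphism $h$ of (\ref{eq:hstack}), the section on $U$ is uniquely determined by the pair $(\eta_{R'},\xi_{R'})$, yielding a canonical extension. The complement $(X\times\Spec R')\setminus U$ lies over the discriminant divisor $\Delta_{a|_{R'}}\subset X_{R'}$ together with possible vertical components in the special fibre; it has codimension at least one in the two-dimensional scheme $X\times\Spec R'$. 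Extension across codimension-two points is automatic from the normality and affineness of $\frr$ by Hartogs. The remaining codimension-one extension across the horizontal discriminant is the principal technical obstacle: my plan is to take a further finite ramified cover $R''/R'$ so that $X\times\Spec R''$ has sufficient ramification along the pull-back of the discriminant, after which the normality of $\frr$ together with an $H\times G\times\GG_m$-equivariant Riemann extension (equivalently, the valuative criterion for the affine normal scheme $\frr$) allows the section to be defined at the codimension-one boundary. This step relies essentially on the definition of $\frr$ as the normalization of the affine closure of $\frr^{\reg}$, and it is the only place where the finite separable base change of $R$ is genuinely required; collecting everything yields the desired lift $\Spec R''\to\Cmod_H$.
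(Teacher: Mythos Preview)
Your reduction strategy---first extend the pair $(\eta,\xi)$ using the known properness of $f^{\Hit}_H$ and $f^{\Hit}$, then extend the section of the associated $\frr$-bundle---is a legitimate route, and it is genuinely different from the paper's. The paper does not project to $\MHit_H\times_{\calA_H}(\calA_H\times_\calA\MHit)$ at all; it simply re-runs Ng\^o's direct argument for the valuative criterion of $f^{\Hit}:\MHit\to\calA$, replacing the triple (Kostant section $\frc\to[\frg/G]$, affineness of $\frg$, normality of $\frg$) by (section $\frc_H\to[\frr/H\times G]$, affineness of $\frr$, normality of $\frr$). That argument produces a base point over all of $X_R$ via the section, compares it with the given $K$-point over the regular-semisimple locus, and invokes Hartogs once. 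Your approach trades the explicit section for the black-box properness results, which is fine logically since those are already established in \cite{NgoFL}.

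However, your step (e) rests on a miscount of codimension and is unnecessary. After step (c) you have a section on $U$; but you \emph{also} still have the original section $s_{K'}$ on $X\times\Spec K'$, and the two agree on the overlap $U\cap(X\times\Spec K')$ since both are the unique lift to $[\frr^{\reg}/H\times G]=\Gamma(h)$ determined by $(\eta_{R'},\xi_{R'})$. Hence the section is already defined on $U\cup(X\times\Spec K')$. The complement of this set is contained in the intersection of the (horizontal) discriminant divisor with the special fibre $X\times\Spec k'$, a finite set of closed points---codimension two in the regular two-dimensional scheme $X\times\Spec R'$. Your step (d) (Hartogs, using affineness and normality of $\frr$) therefore already finishes the argument; the ``codimension-one extension across the horizontal discriminant'' you worry about does not arise, and no further ramified cover is needed. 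Relatedly, the finite separable extension is consumed entirely in step (a) when extending the Deligne--Mumford points $\eta_K,\xi_K$, not in your step (e).
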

\begin{proof}
The proof is similar to the existence part of the valuative criterion for the Hitchin fibration $f^{\Hit}:\MHit\to\calA$. The argument only uses the existence of a section $\frc_H\to[\frr/H\times G]$, the affineness and the normality of $\frr$.
\end{proof}

\subsection{The generic locus of the endoscopic correspondence}
We need an explicit description, also due to Ng\^o, of the endoscopic correspondence over the {\em generic} point of $\calA_H$. 

Fix a geometric generic point $a_H\in\calA_H$ with values in some algebraically closed field $k(a_H)$. We base change the diagram (\ref{d:endocorr}) to $a_H$. Let $a\in\calA(k(a_H))$ be the image of $a_H$ in $\calA$. Recall that we have a resultant divisor $\frR^G_H\subset\frc_H$ (cf. \cite[Lemme 1.10.2]{NgoFL}). Let $\frR(a_H)$ be the pull-back of the divisor $\frR^G_H$ to $X\otimes_kk(a_H)$ via $a_H$. Since $a_H$ is a generic point of $\calA_H$, $\frR(a_H)$ is a multiplicity-free divisor of degree $r=r_\kappa$. The restriction of the exact sequence (\ref{eq:exactcalP}) to $a_H$ becomes
\begin{equation}\label{eq:excalP}
1\to\prod_{v\in\frR(a_H)}\calR_{a_H,v}\to\calP_a\to\calP_{H,a_H}\to1
\end{equation}
where each $\calR_{a_H,v}$ is (non-canonically) isomorphic to $\GG_m$. Fix such an isomorphism for each $v\in\frR(a_H)$. In particular, (\ref{eq:excalP}) induces an isomorphism $\pi_0(\calP_a)\isom\pi_0(\calP_{a_H})$.

Fix a point $\tilx_H$ in the cameral curve $X^{\rs}_{a_H,\Theta}$, which determines a surjective homomorphism
\begin{equation}\label{eq:latticesurjpi0}
\xcoch(T)\twoheadrightarrow\pi_0(\calP_{a_H})\cong\pi_0(\calP_{a}).
\end{equation}
Fix a Kostant section for $\fHit_H$, which gives an isomorphism $\MHit_{H,a_H}\cong\calP_{H,a_H}\cong\calP_a/\prod_{v\in\frR(a_H)}\GG_m$.

\begin{lemma}[B-C.Ng\^o]\label{l:simpleC}
\begin{enumerate}
\item []
\item The correspondence $\calC_{H,a_H}$ is isomorphic to
\begin{equation}\label{d:CaH}
\xymatrix{& \calP_a\twtimes{\prod_v\GG_m}(\prod_{v\in\frR(a_H)}\PP^1)
\ar[dr]^{\overrightarrow{c}}\ar[dl]_{\overleftarrow{c}}\\
\calP_{H,a_H}\cong\calP_a/\prod_{v}\GG_m & & \MHit_{a}} 
\end{equation}
Here, for each $v\in\frR(a_H)$, the factor $\calR_{a_H,v}\cong\GG_m$ acts on the corresponding factor $\PP^1$ by homotheties on $\AA^1\subset\PP^1$.

\item Fix an ordering of the points in $\frR(a_H)$, so that we can write $\frR(a_H)=\{v_1,\cdots,v_{r}\}$. The correspondence $\calC_{H,a_H}$ is isomorphic to the composition:
\begin{equation}\label{d:compC}
\xymatrix{&\calC_1\ar[dl]_{\overleftarrow{c_1}} &\cdots & \calC_i\ar[dl]_{\overleftarrow{c_i}}\ar[dr]^{\overrightarrow{c_i}} & \cdots& \calC_r\ar[dr]^{\overrightarrow{c_{r}}}\\
\calM_0 & \cdots &\calM_{i-1} & &\calM_i &\cdots &\calM_{r}}
\end{equation}
where $\calM_0=\MHit_{H,a_H}$ and $\calM_{r}=\MHit_a$. The Picard stack $\calP_a$ acts on the above diagram, compatible with its action on the diagram (\ref{d:CaH}).

\item For $i=1,\cdots,r$, $\calC_i$ is a $\PP^1$-bundle over $\calM_{i-1}$, equipped with two sections $s^0_i,s^\infty_i:\calM_{i-1}\to\calC_i$ whose images $\calC^0_i$ and $\calC^\infty_i$ are disjoint. The morphism $\overrightarrow{c_i}$ is birational, mapping $\calC_i-\calC^0_i\cup\calC^\infty_i$ isomorphically onto an open subset of $\calM_i$, and identifies $\calC^0_i$ and $\calC^\infty_i$ via an isomorphism
\begin{equation*}
\gamma_i:\calC^0_i\isom\calC^\infty_i.
\end{equation*}

\item The diagram (\ref{d:compC}) induces bijections on the set of irreducible components:
\begin{equation*}
\pi_0(\calP_{H,a_H})\cong\Irr(\calM_0)\cong\Irr(\calM_1)\cong\cdots\cong\Irr(\calM_{r})\cong\pi_0(\calP_a).
\end{equation*}
Using the map (\ref{eq:latticesurjpi0}), all the above component groups are naturally quotients of $\xcoch(T)$. Then there exists a coroot $\beta^\vee_i\in\Phi^\vee-\Phi_H^\vee\subset\xcoch(T)$ (i.e., $\kappa(\beta^\vee_i)\neq1$), such that the following diagram is commutative:
\begin{equation*}
\xymatrix{\Irr(\calC^0_i)\ar[r]^{\Irr(\gamma_i)}_{\sim} & \Irr(\calC^\infty_i)\\ 
\Irr(\calM_{i-1})\ar[u]^{\Irr(s^0_i)}_{\wr}\ar[r]^{+\beta^\vee_i}_{\sim} & \Irr(\calM_{i-1})\ar[u]^{\Irr(s^\infty_i)}_{\wr}}
\end{equation*}
where the lower arrow labeled by $+\beta^\vee_i$ means the translation by the image of $\beta^\vee_i$ under the map $\xcoch(T)\twoheadrightarrow\Irr(\calM_{i-1})$.

\end{enumerate}
\end{lemma}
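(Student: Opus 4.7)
The plan is to describe $\MHit_a$ and the endoscopic correspondence $\calC_{H,a_H}$ very explicitly by combining the exact sequence \eqref{eq:excalP} of Picard stacks with Ng\^o's local analysis at the resultant points. The first main input is a local-to-global description of $\MHit_a$ for generic $a_H$. By genericity, the cameral cover $X_a$ is smooth away from the resultant points $v\in\frR(a_H)$, and has exactly one ordinary singularity above each such $v$, of nodal type associated to a coroot $\beta^\vee_v\in\Phi^\vee\setminus\Phi^\vee_H$ (the coroot distinguishing $G$ from $H$ at $v$). The local compactified Prym is modeled on a nodal $\PP^1$ (a $\PP^1$ with $\{0\}$ and $\{\infty\}$ identified), on which the local group $\calR_{a_H,v}\cong\GG_m$ acts by homotheties on $\AA^1\subset\PP^1$ fixing $\{0\}$ and $\{\infty\}$. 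Using the Kostant section to trivialize $\MHitreg_a\cong\calP_a$ and gluing the local pictures, one identifies $\MHit_a$ with the quotient of $\calP_a\twtimes{\prod_v\calR_{a_H,v}}\prod_v\PP^1$ obtained by identifying the two boundary strata $\{x_v=0\}$ and $\{x_v=\infty\}$ at each $v$.

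For part (1), note that since $a_H$ is generic we have $\delta_H(a_H)=0$, so the Kostant section identifies $\MHit_{H,a_H}$ with $\calP_{H,a_H}$, and $h_\calM|_{a_H}:\calP_a\to\calP_{H,a_H}$ is the surjection from \eqref{eq:excalP}. Its graph inside $\MHit_{H,a_H}\times\MHit_a$ is $\calP_a$, and its closure is obtained by appending the compactification data at each $v$. Because the rational map $\MHit_a\dashrightarrow\MHit_{H,a_H}$ is multi-valued at the nodal boundary (the two branches of each node lift to different points of the closure), the closure is naturally the \emph{separated} contracted product $\calP_a\twtimes{\prod_v\calR_v}\prod_v\PP^1$, with the two boundary sections left distinct. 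The left leg $\overleftarrow c$ is then the obvious quotient $[q,x_1,\dots,x_r]\mapsto[q]\in\calP_a/\prod_v\calR_v=\calP_{H,a_H}$, and $\overrightarrow c$ is the gluing map to $\MHit_a$.

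For parts (2) and (3), decompose the contracted product by introducing the $\PP^1$-factors one at a time. Fix an ordering $v_1,\dots,v_r$ of $\frR(a_H)$. Define $\calM_i$ by contracting and gluing only at $v_1,\dots,v_i$, so that $\calM_0=\MHit_{H,a_H}$ and $\calM_r=\MHit_a$. The transition from $\calM_{i-1}$ to $\calM_i$ is effected by first forming the $\PP^1$-bundle $\calC_i$ over $\calM_{i-1}$ obtained by appending $\PP^1_{v_i}$ (without contracting by $\calR_{v_i}$), and then collapsing via $\overrightarrow{c_i}$: the two sections $s^0_i,s^\infty_i$ arise from the $\calR_{v_i}$-fixed points $\{0\},\{\infty\}$, and away from $\calC^0_i\cup\calC^\infty_i$ the map $\overrightarrow{c_i}$ is an isomorphism onto its image, since $\calR_{v_i}$ acts freely on the open $\GG_m\subset\PP^1_{v_i}$. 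The composition of correspondences $\calC_1*\cdots*\calC_r$ then reconstitutes the full contracted product, yielding (2) and (3).

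Finally, for (4), one must identify the isomorphism $\gamma_i:\calC^0_i\isom\calC^\infty_i$ on the level of components. The key point is that the right-translation action of $\calR_{v_i}$ on the intermediate Picard $\calP_a/\prod_{j>i}\calR_{v_j}$ corresponds, under the natural surjection $\xcoch(T)\twoheadrightarrow\pi_0(\calP_a)$ coming from \eqref{eq:latticesurjpi0}, to translation by a coroot $\beta^\vee_i\in\Phi^\vee$ determined by the local monodromy of the cameral cover at $v_i$. Since $v_i$ is a resultant point, $\beta^\vee_i\in\Phi^\vee\setminus\Phi^\vee_H$; by the characterization of $\Phi^\vee_H$ as the coroots on which $\kappa$ is trivial, this gives $\kappa(\beta^\vee_i)\neq1$. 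Tracing through the identifications above, $\gamma_i$ on components is exactly translation by $\beta^\vee_i$. The main technical obstacle is the first step: producing a clean contracted-product description of $\MHit_a$ at the resultant points and correctly tracking which coroot $\beta^\vee_i$ is attached to each local model; here one relies on Ng\^o's (unpublished) local analysis and the explicit description of the regular centralizer $J$ and its $H$-analogue via Donagi-Gaitsgory.
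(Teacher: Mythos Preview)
The paper does \emph{not} give a proof of this lemma: it is stated in the appendix as a result due to Ng\^o's unpublished work, and the text moves directly from the statement to the bibliography. So there is no ``paper's own proof'' to compare against; the paper simply records the statement and uses it as a black box (notably in Lemma~\ref{l:nodal}).

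Your sketch is a plausible reconstruction of how the argument should go, and the overall architecture is sound: over the geometric generic point of $\calA_H$ one has $\delta_H(a_H)=0$ so $\MHit_{H,a_H}\cong\calP_{H,a_H}$, the cameral curve for $G$ acquires exactly one nodal singularity per point of $\frR(a_H)$, and the compactified Picard/Hitchin fiber $\MHit_a$ is obtained from $\calP_a$ by adjoining a nodal $\PP^1$ at each such point; the graph closure then is the un-glued contracted product, and the factorization in (2)--(3) is the obvious one-coordinate-at-a-time decomposition. The identification in (4) of the gluing on components with translation by a coroot in $\Phi^\vee\setminus\Phi^\vee_H$ is also the expected mechanism.

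That said, your write-up is a sketch rather than a proof, and the places where real work is hidden are exactly the ones you flag yourself: (i) the precise local model for $\MHit_a$ at a resultant point and why the closure of the graph of $h_\calM$ resolves the node rather than reproducing it (this is what makes $\calC_{H,a_H}$ the \emph{un-identified} contracted product while $\MHit_a$ is the identified one), and (ii) pinning down which coroot $\beta^\vee_i$ governs the gluing at $v_i$ and why it lies in $\Phi^\vee\setminus\Phi^\vee_H$. Both of these ultimately rest on the local structure of $J$ versus $J_H$ along the resultant divisor and on Ng\^o's unpublished analysis, so your honest acknowledgment of that dependence is appropriate. In short: your outline matches what the result demands and there is no visible error, but since the paper itself offers no proof, there is nothing further to compare it to.
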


\end{document}